 \def\p{{\cal P}}
 \def\Z{{\mathbb{Z}}}
 \newcommand{\GDD}{\ensuremath{\mbox{\sf GDD}}}
  \newcommand{\HWP}{\ensuremath{\mbox{\sf HWP}}}
 \renewcommand{\-}{\ensuremath{\text{--}}}
\let\oldmarginpar\marginpar
\renewcommand\marginpar[1]{\-\oldmarginpar[\raggedleft\footnotesize #1]%
{\raggedright\footnotesize #1}}
\newtheorem{theorem}{Theorem}
\newtheorem{lemma}[theorem]{Lemma}
\renewenvironment{proof}{\noindent{\it Proof}:}{\hfill $\blacksquare$}
\DeclareMathOperator{\lcm}{lcm}
\newenvironment{packed_enum}{
\begin{enumerate}
  \setlength{\itemsep}{1pt}
  \setlength{\parskip}{0pt}
  \setlength{\parsep}{0pt}
}{\end{enumerate}}
\newenvironment{packed_item}{
\begin{itemize}
  \setlength{\itemsep}{1pt}
  \setlength{\parskip}{0pt}
  \setlength{\parsep}{0pt}
}{\end{itemize}}
\begin{document}

\title{On the Hamilton-Waterloo Problem with triangle factors and $C_{3x}$-factors\footnote{This work is supported by the Scientific and Technical Research Council of Turkey (TUBITAK), under grant number 113F033.}}

\author{John Asplund\\
Dalton State College\\
Department of Technology and Mathematics\\
Dalton, GA 30720, USA\\
\\
David Kamin\\
University of Massachusetts Dartmouth\\
Kaput Center for Research and Innovation in STEM Education\\
Dartmouth, MA 02747, USA\\
\\
Melissa Keranen and Adri\'{a}n Pastine\\
Michigan Technological University\\
Department of Mathematical Sciences\\
Houghton, MI 49931, USA\\
\\
Sibel {\"O}zkan\\
Gebze Technical University\\
Department of Mathematics\\
Gebze, Kocaeli, Turkey 41400}

\maketitle

 \begin{abstract}
The Hamilton-Waterloo Problem (HWP) in the case of  $C_{m}$-factors and $C_{n}$-factors asks if $K_v$, where $v$ is odd (or $K_v-F$, where $F$ is a 1-factor and $v$ is even), can be decomposed into r copies of a 2-factor  made either entirely of $m$-cycles and $s$ copies of a 2-factor made entirely of $n$-cycles.  In this paper, we give some general constructions for such decompositions and apply them to the case where $m=3$ and $n=3x$. We settle the problem for odd $v$, except for a finite number of $x$ values.  When $v$ is even, we make significant progress on the problem, although open cases are left. In particular, the difficult case of $v$ even and $s=1$ is left open for many situations.
 \end{abstract}

\section{Introduction}
The Oberwolfach problem was first proposed by Ringel in 1967, and involves seating $v$ conference attendees at $t$ round tables over $\frac{v-1}{2}$ nights such that each attendee sits next to each other attendee exactly once. It is mathematically equivalent to decomposing $K_v$ into 2-factors where $K_v$ is the complete graph on $v$ vertices and each 2-factor is isomorphic to a given 2-factor $Q$. In the original statement of the problem, we have that $v$ must be odd.
It was later extended to the spouse-avoiding Oberwolfach problem, allowing for even $v$ by decomposing $K_v-F$, where $F$ is a 1-factor.

The Hamilton-Waterloo Problem (HWP) is an extension of the Oberwolfach Problem.  Instead of seating $v$ attendees at the same $t$ tables each night, the Hamilton-Waterloo problem asks how the $v$ attendees can be seated if they split their nights between two different venues.  The attendees will all spend the same $r$ nights in Hamilton, which has round tables of size $m_1, m_2, \dots ,m_k$, and $s$ nights in Waterloo, which has round tables of size $n_1, n_2, \dots ,n_p$ where $\sum_{i=1}^k m_i = \sum_{i=1}^p n_i =v$.  The case when $m_1 = m_2 =  \dots  =m_k =m$ and $n_1 = n_2 = \dots = n_p = n$ is called the Hamilton-Waterloo Problem with uniform cycle sizes, and this variant of the problem gets most of the attention. Graph theoretically, this problem is equivalent to decomposing $K_v$ (or $K_v-F$ when $v$ is even) into 2-factors where each 2-factor consists entirely of $m$-cycles (a $C_{m}$-factor) or entirely of $n$-cycles (a $C_{n}$-factor). Throughout this paper, the word \emph{factor} is assumed to be a 2-factor unless otherwise stated.  We frequently refer to a $C_{3}$-factor as a
\emph{triangle factor} and a Hamilton cycle as a \emph{Hamilton factor}. 

A \emph{decomposition} of a graph $G$ is a partition of the edge set of $G$. A decomposition of $K_{v}$ into $C_{m}$-factors is called a $C_{m}$-factorization.
We will refer to a solution to the Hamilton-Waterloo Problem with $r$ factors of $m$-cycles, $s$ factors of $n$-cycles, and $v$ points as a resolvable $(C_{m},C_{n})$-decomposition of $K_{v}$ into $r$
$C_{m}$-factors and $s$ $C_{n}$-factors, and we will let $(m,n)\-\HWP(v;r,s)$ denote such 
a decomposition.
In order for an  $(m,n)\-\HWP(v;r,s)$ to exist, it is clear that $r+s=\frac{v-1}{2}$ (or $r+s=\frac{v-2}{2}$, for even $v$), and both $m$ and $n$ must divide $v$.  These conditions are summarized in the following theorem.  

\begin{theorem}
\label{Ncond}
{\normalfont\cite{ABBE}} The necessary conditions for the existence of an $(m,n)\-\HWP(v;r,s)$ are
 \begin{packed_enum}
  \item If $v$ is odd, $r+s=\frac{v-1}{2}$,
  \item If $v$ is even, $r+s=\frac{v-2}{2}$,
  \item If $r>0$, $m|v$,
  \item If $s>0$, $n|v$.
 \end{packed_enum}
\end{theorem}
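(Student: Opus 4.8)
The plan is to verify each of the four conditions by a direct counting argument, relying only on the degree of $K_v$ (or of $K_v-F$) and on the fact that the cycles of a factor partition the vertex set.

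First I would handle conditions (1) and (2). Any 2-factor of a graph $H$ on $v$ vertices is a spanning $2$-regular subgraph, so deleting it decreases the degree of every vertex by exactly $2$. If $v$ is odd, $K_v$ is $(v-1)$-regular with $v-1$ even, so every decomposition of $K_v$ into 2-factors has exactly $\frac{v-1}{2}$ parts; since the $r$ $C_m$-factors together with the $s$ $C_n$-factors of an $(m,n)\-\HWP(v;r,s)$ form such a decomposition, $r+s=\frac{v-1}{2}$. If $v$ is even, $K_v$ is $(v-1)$-regular with $v-1$ odd, so $K_v$ admits no 2-factorization at all; the decomposition is then of $K_v-F$ for some 1-factor $F$, which exists since $v$ is even, and $K_v-F$ is $(v-2)$-regular, so the same count gives $r+s=\frac{v-2}{2}$.

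Next I would address conditions (3) and (4). A $C_m$-factor partitions the $v$ vertices into cycles each of length exactly $m$, so its existence expresses $v$ as a sum of copies of $m$; hence if $r>0$ then $m\mid v$. The identical argument applied to a $C_n$-factor when $s>0$ gives $n\mid v$.

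I expect no genuine obstacle here: the statement is simply a bundle of elementary necessary conditions. The only point requiring a moment's care is the parity of $v-1$, which is exactly what forces the deletion of a $1$-factor in the even case and thereby accounts for the discrepancy between (1) and (2).
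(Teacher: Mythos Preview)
Your argument is correct and is the standard elementary verification of these necessary conditions. Note, however, that the paper does not give its own proof of this theorem: it is quoted with a citation to \cite{ABBE} and left unproved, so there is no in-paper argument to compare against.
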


Recall that the Oberwolfach Problem involves seating $v$ conference attendees at $t$ round tables such that each attendee sits next to each other attendee exactly once.  The Oberwolfach Problem 
for constant cycle lengths was solved in {\normalfont\cite{AH,ASSW,HS}}. This is equivalent to the Hamilton-Waterloo Problem with $r=0$ or $s=0$.

\begin{theorem}
\label{OP}
{\normalfont\cite{AH,ASSW,HS}}
There exists a resolvable $m$-cycle decomposition of $K_{v}$ (or $K_{v}-F$ when $v$ is even) if and
only if $v \equiv 0 \pmod{m}$, $(v,m) \not = (6,3)$ and $(v,m) \not = (12,3)$.
\end{theorem}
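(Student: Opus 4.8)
The plan is to prove the two directions separately. \emph{Necessity} is quick: each $C_m$-factor is a spanning $2$-regular subgraph of the host graph, so every vertex lies on $m$-cycles, forcing $m \mid v$; and counting edges (the host graph is $(v-1)$- or $(v-2)$-regular while each factor is $2$-regular) pins down the number of factors, so a resolvable $C_m$-decomposition of $K_v$ requires $v\equiv 0\pmod m$ and likewise for $K_v-F$. The two exceptions $(v,m)=(6,3)$ and $(12,3)$ must then be excluded by hand: a resolvable $C_3$-decomposition of $K_6-F$ or of $K_{12}-F$ is precisely a nearly Kirkman triple system of that order, and a short finite case analysis --- tracking how the triangle factors can interact with the removed $1$-factor --- shows that none exists. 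Everything else is sufficiency.

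For sufficiency I would use a recursive \emph{filling-in-holes} scheme built on resolvable group-divisible designs. The key object is a $C_m$-frame: a $C_m$-\GDD\ whose $m$-cycles partition into partial factors, each spanning all but one group. Given a $C_m$-frame of type $g^u$ together with resolvable $C_m$-decompositions of $K_g$ and of $K_{g+1}$ (or the corresponding $-F$ versions) to fill the groups, one gets a resolvable $C_m$-decomposition on $gu$ or $gu+1$ points; iterating and varying $g$ and $u$ reduces the theorem to (a) finitely many small values of $v$, and (b) the existence of $C_m$-frames in a few infinite families of types. I would run this argument split according to the parity of $m$ and of $v$: the host graph is $K_v$ when $v$ is odd and $K_v-F$ when $v$ is even, and for odd $m$ evenness of $v$ forces $2m\mid v$, whereas for even $m$ it does not, so the bookkeeping differs in each case. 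The odd-$m$ subcase is the smoother one, handled by circulant/Cayley-graph (``rotational'') constructions over $\Z_v$; the even-$m$ subcase needs more care.

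The remaining work is concentrated in two places. First, the base cases --- the smallest multiples of $m$, say $v\in\{m,2m,3m,4m,\dots\}$ up to a bound --- have no uniform treatment: for $v=m$ the decomposition is a single Hamilton cycle (trivial), but for $v=2m,3m,\dots$ I would construct the factors explicitly by difference methods over $\Z_v$ or over $\Z_{v/m}\times\Z_m$, with a few sporadic orders possibly requiring individual constructions, and this is exactly where $(6,3)$ and $(12,3)$ surface as genuine obstructions. Second, one must build the needed $C_m$-frames, via a mixture of direct constructions (starters--adders, Latin squares, Kirkman frames) and Wilson-type recursion applied to resolvable transversal designs. The main obstacle, as in the original proofs, is even $m$ with $v/m$ odd: the bipartite constructions that work cleanly for odd $m$ (and for even $m$ with $v/m$ even) break down on parity grounds, forcing a separate family of frames and base cases --- this is the point at which the techniques for $K_{2n}-F$ diverge from those for odd cycle length, and I expect the parity bookkeeping around the removed $1$-factor $F$ when gluing frames to filled groups to be the most delicate and error-prone part of a full write-up.
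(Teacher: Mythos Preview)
The paper does not give its own proof of this theorem: it is quoted verbatim from the literature (Alspach--H\"aggkvist, Alspach--Schellenberg--Stinson--Wagner, and Hoffman--Schellenberg) and used as a black box throughout. So there is no in-paper argument to compare your proposal against.

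As for your outline itself, it is a plausible high-level strategy, but it drifts from what the cited papers actually do. The sufficiency arguments in \cite{ASSW} and \cite{HS} are largely \emph{direct} constructions --- difference-family and $1$-rotational methods over cyclic groups, plus explicit starter factors --- rather than the frame-and-fill recursion you describe. Frames of the kind you invoke are powerful, but reducing \emph{all} of sufficiency to ``$C_m$-frames of a few types plus small ingredients'' would itself require substantial infrastructure that you only gesture at (existence of the needed frames, compatibility of the holey factors with the leftover $1$-factor when $v$ is even), and in practice the original authors found direct constructions cleaner for this particular uniform-cycle problem. Your necessity and exception arguments are fine in outline; the nonexistence of an NKTS of orders $6$ and $12$ is classical and indeed comes down to a finite check.

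In short: nothing in your sketch is outright wrong, but since the paper supplies no proof there is nothing to match, and if you intend to reconstruct the literature proof you should look to direct rotational constructions rather than a frame recursion.
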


%Gvozdjak \cite{G} has solved the uniform length cycle version of the
%Oberwolfach problem for multi-graphs by giving necessary and sufficient conditions for the existence of a
%2-factorization of $\lambda K_{dm}$ or $\lambda K_{dm}-F$ into 2-factors consisting of $m$-cycles only.
%\begin{theorem}
%\label{mCycles}
%\cite{G}
%Let $\lambda,d$ and $m$ be positive integers with $m \geq 3$. Then $\lambda K_{dm}$ (or $\lambda K_{dm}-F$ when $\lambda$ is odd and $dm$ is even) has a resolvable $m$-cycle decomposition if and only if none of the of the following is the case:
%\begin{enumerate}
%\item[(i)] $\lambda \equiv 2 \pmod{4}$, $d=2$, $m=3$
%\item[(ii)] $\lambda$ odd, $d=2$, $m=3$
%\item[(iii)] $\lambda=1$, $d=4$, $m=3$
%\end{enumerate}
%\end{theorem}
An \emph{equipartite graph} is a graph whose vertex set can be partitioned into $u$ subsets of size $h$ such that no two vertices from the same subset are connected by an edge.  The complete equipartite graph with $u$ subsets of size $h$ is denoted $K_{(h:u)}$, and it contains every edge between vertices of different subsets.
Another key result solves the Oberwolfach Problem for constant cycle lengths over complete equipartite graphs (as opposed to $K_v$).  That is to say, with finitely many exceptions, $K_{(h:u)}$ has a resolvable $C_m$-factorization.

\begin{theorem}\label{equip}
 {\normalfont\cite{L}}  For $m\geq 3$ and $u\geq 2$, $K_{(h:u)}$ has a resolvable $C_m$-factorization if and only if $hu$ is divisible by $m$, $h(u-1)$ is even, $m$ is even if $u=2$, and $(h,u,m) \not \in \{(2,3,3), (6,3,3),(2,6,3),\\(6,2,6)\}$.
\end{theorem}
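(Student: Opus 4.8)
The necessity of the four conditions is immediate: the $m$-cycles of a single factor partition the $hu$ vertices, so $m\mid hu$; every vertex of $K_{(h:u)}$ has degree $h(u-1)$, which a $2$-factorization forces to be even; when $u=2$ the graph is bipartite and has no odd cycle, so $m$ must be even; and the four excluded triples are eliminated by a short direct (or computer) search. The content is the sufficiency, which I would prove by induction on $u$, with an inner reduction that peels prime factors off $h$, all fed by a finite list of base cases.

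For the recursive step I would lean on four tools. \emph{(i) Inflation:} replacing every vertex of a resolvable $C_m$-factorization of $K_{(h:u)}$ by $t$ copies turns each factor into a disjoint union of ``blown-up'' $m$-cycles, each of which decomposes into $t$ $C_m$-factors; the result is a resolvable $C_m$-factorization of $K_{(ht:u)}$, and this drives the reduction on $h$. \emph{(ii) Filling via frames:} a $C_m$-frame of the quotient $K_{(hw:u/w)}$ --- a decomposition into partial factors each missing exactly one super-part --- merges with resolvable $C_m$-factorizations on the individual super-parts (of $K_{(h:w)}$, and, using also the within-part edges, of $K_{hw}$ or $K_{hw}-F$ supplied by Theorem~\ref{OP}) into genuine parallel classes; this is the induction on $u$. \emph{(iii) An \rgdd\ when $m$ is odd:} an \rgdd\ with block size $m$ and group type $h^u$ gives the factorization outright once each block's $K_m$ is cut into its $(m-1)/2$ Hamilton cycles, and the relevant existence theory covers most odd-$m$ parameters (indeed for $m=3$ the theorem \emph{is} such an existence statement). \emph{(iv) Bipartite systems when $m$ is even:} here $K_m$ has no Hamilton decomposition, so the factors must instead be assembled from $m$-cycle decompositions of the between-part graphs $K_{h,h}$, still arranged so as to resolve.

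The base cases I would handle individually: $u=2$, which is precisely the resolvable $m$-cycle decomposition of $K_{h,h}$ (necessarily with $h$ and $m$ even), a classical fact; the small values $u=3,4$; small $m$ relative to $h$; and the residue classes of $h$ and $u$ modulo $m$ not reached by the recursions. For $m=3$ and $m=6$ one checks en route that the listed triples are genuinely obstructed. The main difficulty, I expect, is the bookkeeping --- arranging the induction so that \emph{every} admissible pair $(h,u)$ is produced by some chain of the moves (i)--(iv) from a base case, which demands care precisely where the intermediate frame, \rgdd, or resolvable design one would like to invoke itself falls into one of its own exceptional families and must be replaced by an ad hoc construction --- together with the even-$m$ cases, where the loss of the ``$K_m$ into Hamilton cycles'' shortcut means the resolvable factors have to be built from small bipartite $m$-cycle systems essentially by hand.
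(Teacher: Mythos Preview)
This theorem is not proved in the paper at all: it is quoted verbatim from Liu~\cite{L} and used throughout as a black-box ingredient, so there is no ``paper's own proof'' for you to be compared against. Your outline is a plausible high-level sketch of how a result of this type is established---and Liu's actual argument does indeed combine direct base constructions with recursive machinery of the inflation/frame/filling variety---but nothing resembling that argument appears in the present paper, which simply invokes the statement whenever a $C_m$-factorization of some $K_{(h:u)}$ is required.

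If your goal is to supply a self-contained proof here, be aware that what you have written is only an architectural plan: the real work in Liu's paper lies precisely in the places you flag as ``bookkeeping'' and ``base cases,'' and those occupy the bulk of that paper. In particular, step~(ii) as you describe it presupposes the existence of the relevant $C_m$-frames, which is itself a substantial result, and step~(iv) for even $m$ is not just a matter of assembling bipartite cycle systems but requires careful difference-method constructions to achieve resolvability. So while nothing you say is wrong in spirit, the proposal as written would not stand as a proof; it is closer to a table of contents for one.
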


Much of the attention to the HWP has been dedicated to the case of triangle factors and Hamilton factors.
The results for this case have been summarized in the following theorem.

\begin{theorem}
\label{Hammy}
{\normalfont\cite{DL,DL2,HNR,LS}}
There exists a $(3,v)\-\HWP(v;r,s)$ with
\begin{packed_item}
\item $2 \leq s \leq \frac{v-1}{2}$ and $v \equiv3 \pmod{6}$ except possibly when:
\[v \equiv 15 \pmod{18} \mbox{ and }2 \leq s \leq \frac{v-3}{6}\mbox{ or } s=\frac{v+3}{6}+1,\]
\item $s=1$ and $v \equiv 3 \pmod{6}$ except when $v=9$ and possibly when:
\[v \in \{93,111,123,129,141,153,159,177,183,201,207,213,237,249\}.\] 
\item $2 \leq s\leq(v-2)/2$ and $v \equiv 0 \pmod{6}$ except possibly when $(v,s) \in \{(36,2), (36,4)\}$ 
or when $v \equiv 12 \pmod{18}$ and $2 \leq s \leq (v/6)-1$; and
\item $s=1$ and $v \equiv 0 \pmod{6}$ except possibly when $v=18$, $v \equiv 12 \pmod{18}$ or
$v \equiv 6 \pmod{36}$.
\end{packed_item}
\end{theorem}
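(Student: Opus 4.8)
\noindent\textit{Proof proposal.} In outline this is a consolidation result, so ``proving'' it means (i) confirming that the four bullet points are exactly what \cite{DL,DL2,HNR,LS} establish, and (ii) checking consistency with the necessary conditions of Theorem~\ref{Ncond}. Here $n=v$, so a $C_n$-factor is a Hamilton factor and conditions (3)--(4) collapse to $3\mid v$ with no further divisibility constraint; together with $r+s=(v-1)/2$ for $v$ odd and $r+s=(v-2)/2$ for $v$ even, the only feasible residues are $v\equiv 3\pmod 6$ and $v\equiv 0\pmod 6$, which is precisely the split between the first two bullets (odd) and the last two (even). So the plan is to prove existence for all admissible $(v,s)$ in each of these two residue classes, trimming away a bounded set of genuine exceptions.

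For the ``generic'' portion of each range of $s$ I would lean on recursive and product-type constructions. The endpoints $s=0$ and $r=0$ are handed over directly by Theorem~\ref{OP} (keeping track of its true exceptions $(v,m)\in\{(6,3),(12,3)\}$, which, when one passes between $K_v$ and $K_v-F$, are responsible for the small sporadic exclusions such as $v=9$). For intermediate $s$ the idea is to take a small ingredient decomposition on a complete or complete equipartite graph, blow up each point, and glue in copies of $K_v$ together with resolvable $C_3$- and $C_v$-factorizations of complete equipartite graphs supplied by Theorem~\ref{equip}; resolvable group divisible designs and frame-type constructions then let one dial how many of the assembled 2-factors are triangle factors versus Hamilton factors. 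This machinery covers all but finitely many $(v,s)$, which is exactly why each bullet carries an ``except possibly'' list: those residual values have to be dispatched individually, and a few are left unresolved.

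The main obstacle --- and the reason the statement has exceptions rather than a clean form --- is the case $s=1$, and within it the even case is the worst. A single Hamilton factor cannot be manufactured by the usual blow-up arguments, since a Hamilton cycle does not break apart along the parts of an equipartite or group-divisible structure the way a union of short cycles does; one essentially needs a direct decomposition of $K_v-F$ (or $K_v$) into one Hamilton cycle plus $(v-4)/2$ (respectively $(v-3)/2$) triangle factors, and the room to do this is very tight. I would therefore expect the bulk of the effort to go here: establishing existence for $v\equiv 0\pmod 6$ outside the listed residue classes $v\equiv 12\pmod{18}$ and $v\equiv 6\pmod{36}$ and the value $v=18$, and carefully cataloguing the remaining sporadic $(v,s)$ --- the explicit list in bullet two, $(v,s)\in\{(36,2),(36,4)\}$, and the $v\equiv 12\pmod{18}$, $2\le s\le v/6-1$ band --- where either only an ad hoc construction is available or none is presently known.
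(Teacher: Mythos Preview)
The paper does not prove this theorem at all: it is stated purely as a summary of results imported from the cited references \cite{DL,DL2,HNR,LS}, with no accompanying argument. Your opening observation that this is a consolidation result is therefore exactly right, and the rest of your sketch---recursive blow-ups over equipartite graphs and RGDDs, endpoints via Theorem~\ref{OP}, the $s=1$ bottleneck---is a reasonable high-level picture of how the cited papers proceed, but there is no in-paper proof to compare it against.
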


When considering the $\HWP$ for triangle factors and Hamilton factors, the focus is on a specific
case of the problem.  This paper considers a more general family of decompositions, namely, triangle factors and $3x$-factors of $K_{v}$ for any $v$ that is divisible by both $3$ and $3x$. In
this instance of the problem, $v$ is of the form $3xy$.
When $x=1$, the problem of finding a $(3,3x)\-\HWP(v;r,s)$
is simply that of finding a resolvable $C_{3}$-factorization of $K_{v}$, which is also known as
a Kirkman triple system ($KTS(v)$).  It was shown in 1971 by Ray-Chadhuri and Wilson {\normalfont\cite{RW}}
and independently by Lu (see {\normalfont\cite{Lu}}) that a $KTS(v)$ exists if and only if $v \equiv 3 \pmod{6}$.
%The case of $(3,9)$-$\URD(v,r,s)$ is addressed in \cite{KKO}. They give the following
%results.
%\begin{theorem}
%\cite{KKO}
%\label{3,9}
%There exists a $(3,9)\-\URD(v,r,s)$ for any $v\equiv 9\pmod{18}$, $0\leq r \leq \frac{v-1}{2}$ and $s=\frac{v-1}{2}-r$, except possibly $(r,s)=(\frac{v-3}{2},1)$.
%\end{theorem}
%\begin{theorem}
%\cite{KKO}
%\label{3,9,v=27}
% There exists a $(3,9)\-\URD(v,\frac{v-3}{2},1)$ for any $v\equiv 27\pmod{54}$.
%\end{theorem}
When $y=1$, then the problem
asks for a decomposition of $K_{v}$ into triangle factors and Hamilton cycles. This case is
addressed in {\normalfont\cite{DL}}, {\normalfont\cite{DL2}}, and {\normalfont\cite{HNR}}, and the results were presented in Theorem~{\normalfont\ref{Hammy}}. Therefore, we focus on the cases where $x \geq 2$ and $y \geq 2$. It is
a different type of decomposition than what was considered in {\normalfont\cite{DL,DL2,HNR}}, because in our case, we let both $x$ and $y$ vary . However, as expected, the results given in Theorem~{\normalfont\ref{Hammy}} can be used in the decompositions we are interested in.

The Hamilton-Waterloo Problem was studied in 2002 by Adams, et. al. {\normalfont\cite{ABBE}}.  The paper provides solutions to all Hamilton-Waterloo decompositions on less than 18 vertices. 
Some notable results involving $v=6$ and $v=12$ will be relevant to this paper.

\begin{theorem}
{\normalfont\cite{ABBE}}
\label{6and12}
There exists a $(3,6)\-\HWP(12;r,s)$ if and only if $r+s=5$ except $(r,s)=(5,0)$.
There exists a $(3,12)\-\HWP(12;r,s)$ if and only if $r+s=5$ except $(r,s)=(5,0)$.
There exists a $(3,6)\-\HWP(6;r,s)$ if and only if $r+s=2$ except $(r,s)=(2,0)$.
\end{theorem}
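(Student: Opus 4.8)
The plan is to read off the necessary conditions from Theorems~\ref{Ncond} and~\ref{OP}, and then to dispatch the remaining finitely many cases by explicit construction. For the ``only if'' direction, Theorem~\ref{Ncond} forces $r+s=5$ when $v=12$ and $r+s=2$ when $v=6$; moreover a solution with $s=0$ is precisely a resolvable $C_3$-factorization of $K_{12}-F$ (respectively $K_6-F$), and none exists because $(12,3)$ and $(6,3)$ are the exceptional pairs in Theorem~\ref{OP}. This rules out $(r,s)=(5,0)$ and $(r,s)=(2,0)$. Since $\mathrm{Aut}(K_v)=S_v$ acts transitively on the $1$-factors of $K_v$, for the ``if'' direction we may construct our decompositions over whatever $1$-factor $F$ is most convenient.

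For $v=6$ I would take $V=\mathbb{Z}_6$ with $F$ the difference-$3$ matching $\{0,3\},\{1,4\},\{2,5\}$. Then the difference-$2$ edges form the triangle factor $\{0,2,4\},\{1,3,5\}$ and the difference-$1$ edges form the Hamilton cycle $0\,1\,2\,3\,4\,5$, which is a $C_6$-factor; together they give the only nontrivial case $(r,s)=(1,1)$. The case $(0,2)$ is a Hamilton decomposition of $K_6-F$, which exists by Theorem~\ref{OP}. This settles $v=6$.

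For $v=12$ the idea is to start from the structural decomposition $K_{12}=K_{(4:3)}\cup(3K_4)$, where the three copies of $K_4$ lie on the three parts of size $4$. By Theorem~\ref{equip}, $K_{(4:3)}=K_{4,4,4}$ has a resolvable $C_3$-factorization $T_1\cup T_2\cup T_3\cup T_4$; and writing each $K_4$ as a $4$-cycle plus a perfect matching gives $3K_4=Q\cup F$ with $Q$ a $C_4$-factor and $F$ a $1$-factor of $K_{12}$, so that $K_{12}-F=T_1\cup\cdots\cup T_4\cup Q$. To obtain the mixed parameters with $1\le r\le 4$, I would leave $T_1,\dots,T_{r-1}$ untouched and re-decompose the $2(6-r)$-regular graph $G_r:=T_r\cup\cdots\cup T_4\cup Q$ into a single triangle factor together with $5-r$ factors consisting of $6$-cycles (for the $(3,6)$ statement) or of Hamilton cycles (for the $(3,12)$ statement); this yields parameters $(r,5-r)$. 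The case $(0,5)$ is a resolvable $C_6$-factorization, respectively a Hamilton decomposition, of $K_{12}-F$, both of which exist by Theorem~\ref{OP}.

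The main obstacle is the re-decomposition step: one must check, for each $r\in\{1,2,3,4\}$ and in both the $C_6$ and the $C_{12}$ flavour, that $G_r$ really does split off one triangle factor with the remaining edges falling into the required cycle factors. Since $G_r$ is a regular graph on only $12$ vertices, this is a finite verification — fix explicit $\mathbb{Z}_4$-labellings of the parts, write out $T_1,\dots,T_4$ and $Q$, and exhibit the cycles by hand. I expect the Hamilton ($C_{12}$) cases to be the most delicate, since a Hamilton factor cannot be assembled locally inside the parts the way a $C_6$-factor can; this is presumably also why, in the source~{\normalfont\cite{ABBE}}, these small orders are handled by a direct (partly computational) search rather than by a single uniform construction.
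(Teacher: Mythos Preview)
The paper does not prove this theorem; it is quoted from \cite{ABBE}, where all orders $v\le 17$ are handled directly. So there is no in-paper argument to compare against, and your proposal stands on its own. Your ``only if'' direction and the case $v=6$ are correct.

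For $v=12$, however, the scheme breaks at $(r,s)=(4,1)$. With $r=4$ you leave $T_1,T_2,T_3$ untouched and attempt to re-decompose $G_4=T_4\cup Q$ into one triangle factor and one $C_6$- (or $C_{12}$-) factor. But every triangle of $G_4$ already lies in $T_4$. A triangle using an edge of $Q$ would have two vertices $a,b$ in the same part and a third vertex $c$ in another, forcing both $ac$ and $bc$ to come from $T_4$; since $T_4$ restricted to any two parts is a perfect matching, $c$ has exactly one neighbour in the part of $a$ and $b$, a contradiction. Triangles with all three vertices in one part are impossible too, as $Q$ is a disjoint union of $4$-cycles. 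Hence the unique triangle factor in $G_4$ is $T_4$ itself, and removing it leaves $Q$, a $C_4$-factor --- not a $C_6$- or $C_{12}$-factor. No choice of the $T_i$ or of $Q$ avoids this, so the ``finite verification'' you defer would in fact fail here.

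To salvage $(4,1)$ you must mix edges across more of the $T_i$: for instance, re-decompose $T_3\cup T_4\cup Q$ into \emph{two} triangle factors and one $C_6$- (resp.\ $C_{12}$-) factor, which is not obstructed by the argument above because in $T_3\cup T_4$ each vertex has two neighbours in every other part. That case, and the remaining ones, still require explicit constructions, which is essentially what \cite{ABBE} provides.
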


The authors in {\normalfont\cite{ABBE}} also developed a tripartite construction that
could be used when considering
%\begin{lemma}
%\label{tripartite}
%\cite{ABBE} Let $m \geq 3$, $n \geq 3$ and $s$ be positive integers such that both $m$ and
%$n$ divide $3s$. Suppose there exists a $C_{m}$-factorization and a $C_{n}$-factorization of $K_{s:3}$
%and suppose $(\alpha,\beta) \in HWP(3s;m,n)$. Let $x=2y+1$ be an odd positive integer. Then for all
%non-negative integers $\gamma$ and $\delta$ with $\gamma+\delta=3y$, we have $(\gamma s+\alpha,
%\delta s+\beta) \in HWP(3sx;m,n)$.
%\end{lemma}
%
%They then use this construction to obtain the following result.
%
%\begin{lemma}
%\label{tripartiteResults}
%\cite{ABBE}
%Let $m \geq 3$, $n \geq 3$ and $s$ be positive integers such that $K_{s:3}$ has a $C_{m}$-factorization
%and $K_{s:3}$ has a $C_{n}$-factorization.
%\begin{enumerate}
%\item[(a)] If $ExpHWP(3s;m,n)=HWP(3s;m,n)$, then for all odd positive integers $x$ we have $Exp(3sx;m,n)=HWP(3sx;m,n)$.
%\item[(b)] If $ExpHWP(6s;m,n)=HWP(6s;m,n)$, then for all even positive integers $x$, $x \not \in \{4,6\}$, we have
%$ExpHWP(3sx;m,n)=HWP(3sx;m,n)$.
%\end{enumerate}
%\end{lemma}
$m=3$ and $n=3x$. However, it leaves many open cases, because it relies on the existence of a $(3,v)\-\HWP(v;r,s)$ for all $(r,s)$ and for all
$v \equiv 3 \pmod{6}$.  According to Theorem~{\normalfont\ref{Hammy}}, there are some gaps in
the existence of these.  The problem is that the construction given in {\normalfont\cite{ABBE}} uses a uniform decomposition of $K_{(x:3)}$.
Therefore, we proceed in this paper by developing a new construction that is a bit more general, and in particular, depends on the decomposition of $K_{(x:3)}$ into $r_{p}$ $C_{m}$-factors and $s_{p}$ $C_{n}$-factors.  The flexibility in this construction allows us to settle all but 14 cases of the existence of a $(3,3x)\-\HWP(3xy;r,s)$ for all possible $(r,s)$ whenever both $x\geq 3$ and $y \geq 3$ are odd. We also introduce a modified construction that is used in the cases where at least one of $x$ or $y$ is even. We give almost complete results for these cases as well. In Section~{\normalfont\ref{When $x$ is small}} we handle the cases when $x \in \{2,4\}$ and collect all of the results into a summarizing theorem in Section~{\normalfont\ref{conclusions}}.

\section{Constructions}
\label{Constructions}

In this section, we develop constructions that will later be used to prove our main results about 
the Hamilton-Waterloo Problem in the case of triangle factors and $C_{3x}$-factors.

Recall that $K_{(x:3)}$ is the complete multipartite graph with $3$ parts of size $x$.
Let the parts be $G_0$, $G_1$ and $G_2$ and the vertices be $(a,b)$ with $0\leq a \leq 2, 0\leq b \leq x-1$. 
Consider the edge $\{(a_{1},b_{1}),(a_{2},b_{2})\}$ which has one vertex from $G_{a_{1}}$
and one vertex from $G_{a_{2}}$. With computations being done in $\Z_{x}$, we say this edge has difference $b_{2}-b_{1}$.
Let $T_x(i)$ for $0\leq i\leq x-1$ be the subgraph of $K_{(x:3)}$ obtained by taking all edges of difference: $2i$ between vertices of  $G_0$ and vertices of $G_1$, $-i$ between $G_1$ and $G_2$, and $-i$ between $G_2$ and $G_0$.
\begin{lemma}
$T_x(i)$ is a triangle factor of $K_{(x:3)}$ for any $i$.
\end{lemma}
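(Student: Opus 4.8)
The plan is to verify directly that $T_x(i)$ is a spanning subgraph of $K_{(x:3)}$ in which every vertex has degree exactly $2$, and that its connected components are triangles. Fix $i$ with $0 \le i \le x-1$ and work in $\Z_x$. The vertex $(0,b) \in G_0$ is incident in $T_x(i)$ to exactly one edge to $G_1$, namely the one of difference $2i$, which joins it to $(1, b+2i)$; and to exactly one edge to $G_2$, namely the one of difference $-i$ (the $G_2$--$G_0$ difference is measured as second coordinate minus first in my chosen orientation, so this is the edge to $(2, b-i)$, equivalently the edge of difference $i$ from $G_2$ to $G_0$). Similarly $(1,b) \in G_1$ meets $(0, b-2i)$ and, via the $G_1$--$G_2$ difference $-i$, the vertex $(2, b-i)$; and $(2,b) \in G_2$ meets $(1, b+i)$ and $(0, b+i)$. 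In each case the vertex lies on exactly two edges, so $T_x(i)$ is $2$-regular and spanning.

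Next I would show the components are triangles by starting from an arbitrary vertex and following the two incident edges around. Starting at $(0,b)$, one neighbour is $(1, b+2i)$ and the other is $(2, b-i)$; it then suffices to check that $(1,b+2i)$ and $(2,b-i)$ are adjacent in $T_x(i)$. By the description, $(1, b+2i)$ is joined across the $G_1$--$G_2$ difference $-i$ to $(2, b+2i - i) = (2, b+i)$ — wait, this must be reconciled with the orientation, so here is the key computation to get right: with the differences as defined, the three edges through a putative triangle $\{(0,b),(1,b'),(2,b'')\}$ must simultaneously satisfy $b' - b = 2i$, $b'' - b' = -i$, and $b - b'' = -i$, and indeed these three equations are consistent since their sum is $2i - i - i = 0$. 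Hence for each $b$ the triple $\{(0,b),\,(1,b+2i),\,(2,b+i)\}$ is a triangle of $T_x(i)$, and every edge of $T_x(i)$ lies in exactly one such triple. Since there are $x$ such triples and they partition the $3x$ vertices (the map $b \mapsto (0,b)$ is a bijection onto the $G_0$-vertices, one per triple), $T_x(i)$ is a vertex-disjoint union of $x$ triangles, i.e. a triangle factor.

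The only real subtlety — and the step I would be most careful with — is bookkeeping the orientation convention for the differences on the three classes of edges ($G_0$--$G_1$, $G_1$--$G_2$, $G_2$--$G_0$), since the definition fixes a direction ("difference $b_2 - b_1$" for the edge $\{(a_1,b_1),(a_2,b_2)\}$) and one must be consistent about which endpoint is $(a_1,b_1)$ for each of the three pairs of parts. Once the cycle condition $2i + (-i) + (-i) = 0$ in $\Z_x$ is observed, closure of the triangle is automatic and independent of $i$, which is exactly why the statement holds "for any $i$." No appeal to earlier results is needed; this is a self-contained finite check.
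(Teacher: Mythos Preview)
Your argument is correct and matches the paper's approach: the paper's one-line proof simply observes that the triangles are $\{(0,k),(1,k+2i),(2,k+i)\}$ for $0 \le k \le x-1$, which is exactly the explicit description you arrive at after sorting out the difference conventions and checking the cycle condition $2i+(-i)+(-i)=0$. Your write-up is more detailed (and would read more cleanly if you fixed the orientation convention at the outset rather than correcting it mid-argument), but the mathematical content is the same.
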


\begin{proof}
It is easy to see that the triangles are of the form $\{(0,k),(1,k+2i),(2,k+i)\}$ for every $0 \leq k \leq x-1$.
\end{proof}

Let $H_x(i,j)$ be the subgraph of $K_{(x:3)}$ obtained by taking all edges of difference: $2i$ between $G_0$ and $G_1$, $-i$ between $G_1$ and $G_2$, and $-j$ between $G_2$ and $G_0$.
\begin{lemma}
If $gcd(x,i-j)=1$ then $H_x(i,j)$ is a Hamiltonian cycle of $K_{(x:3)}$.
\end{lemma}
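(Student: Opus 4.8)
The plan is to trace the unique cycle of $H_x(i,j)$ through a fixed vertex and show it already contains all $3x$ vertices. First I would note, exactly as for $T_x(i)$, that $H_x(i,j)$ is $2$-regular: a vertex $(0,b)$ of $G_0$ lies on exactly one edge of difference $2i$ to $G_1$ (namely to $(1,b+2i)$) and is the endpoint of exactly one edge of difference $-j$ coming from $G_2$ (namely from $(2,b+j)$), and the analogous statement holds for the vertices of $G_1$ and $G_2$. Hence $H_x(i,j)$ is a disjoint union of cycles, so it suffices to prove that the component containing $(0,0)$ has length $3x$.

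Next I would follow the edges out of $(0,0)$, using the convention that a difference $d$ ``between $G_a$ and $G_b$'' means an edge $(a,b_1)\,$--$\,(b,b_2)$ with $b_2-b_1=d$ (consistent with the order in which the parts are listed, as in the description of $T_x(i)$). The difference-$2i$ edge from $G_0$ to $G_1$ leads to $(1,2i)$; the difference-$(-i)$ edge from $G_1$ to $G_2$ then leads to $(2,2i-i)=(2,i)$; and the difference-$(-j)$ edge from $G_2$ to $G_0$ then leads to $(0,i-j)$. Thus one full pass through the three parts acts on the $G_0$-vertices by $k\mapsto k+(i-j)\pmod x$, so after $t$ passes the walk is at $(0,t(i-j))$, and it first returns to $(0,0)$ precisely when $x\mid t(i-j)$.

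Then I would invoke the hypothesis $\gcd(x,i-j)=1$: it makes $i-j$ a unit in $\Z_x$, so $x\mid t(i-j)$ is equivalent to $x\mid t$, and the first return to $(0,0)$ occurs after exactly $x$ passes. During those passes the $G_0$-vertices visited are $(0,0),(0,i-j),\dots,(0,(x-1)(i-j))$, which are pairwise distinct and hence exhaust $G_0$; and each pass also traverses one vertex of $G_1$ and one of $G_2$. Since $H_x(i,j)$ is $2$-regular, these intermediate vertices are automatically distinct from one another as well, so the component is a cycle on $3x$ vertices, i.e.\ a Hamilton cycle of $K_{(x:3)}$.

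I do not expect a real obstacle here; the proof is essentially the same bookkeeping as in the lemma for $T_x(i)$, with the one new ingredient being the coprimality condition, which is exactly what forces the single long cycle instead of a union of $\gcd(x,i-j)$ shorter cycles. The only point requiring a little care is keeping the orientation convention for ``difference between $G_a$ and $G_b$'' consistent throughout the trace (note that $T_x(i)=H_x(i,i)$ gives the shift $k\mapsto k$ and recovers the $x$ triangles, a useful consistency check).
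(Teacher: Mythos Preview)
Your proposal is correct and follows essentially the same approach as the paper: both arguments first observe $2$-regularity, then trace the walk $(0,0)\to(1,2i)\to(2,i)\to(0,i-j)$ to see that successive visits to $G_0$ advance by $i-j$, and use $\gcd(x,i-j)=1$ to conclude that all of $G_0$ (and hence, via the fixed differences, all of $G_1$ and $G_2$) lies in a single cycle. Your write-up is in fact slightly more explicit than the paper's about why the intermediate $G_1$- and $G_2$-vertices are distinct, but the underlying idea is identical.
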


\begin{proof}
Since the edges are given by differences it is clear that all vertices have degree 2.
We need to show that all the vertices are connected. We will first show that there is a path between any 2 vertices of $G_0$.
Without loss of generality, we will show that $(0,0)$ is connected to $(0,k)$ for any $k$.
Starting at $(0,0)$, we may traverse the path:  $(0,0),(1,2i),(2,i),(0,i-j)$. Thus the next time that we reach $G_0$ it is via the vertex $i-j$. Since $gcd(x,i-j)=1$, the order of $i-j$ in the cyclic group $\mathbb{Z}_x$ is $x$. Therefore, any $k$ modulo $x$ can be written as $k'(i-j)$, which means that we
reach the vertex $(0,k)$ after visiting the part $G_{0}$ $k'$ times. Hence $(0,0)$ is connected to all the vertices of $G_0$ via a path. 

Because we are taking every edge of a particular difference, it follows that every vertex in $G_1$ is connected to a vertex in $G_0$, and the same is true for vertices in $G_2$. Hence all the vertices are connected, and the cycle is Hamiltonian, as we wanted to prove.
\end{proof}

\subsection{When $x$ is Odd}
\label{Odd $x$}

We can think of a decomposition of a graph $G$ as a partition of the edge set or as a union of edge disjoint subgraphs. This means that a decomposition of $G$ can be given by $E(G)=\cup E(F_i)$ or by $G=\oplus F_i$, where each $F_i$ is an edge disjoint subgraph of $G$.
The next lemma shows that $K_{(x:3)}$ can be decomposed entirely into triangle factors or Hamilton cycles when $x$ is odd.

\begin{lemma}
\label{Alg1}
Let $x$ be an odd integer, and let $\phi$ be a bijection of the set $\{0,1,\ldots,x-1\}$ into itself. Then
\[
K_{(x:3)}=\bigoplus_{i=0}^{x-1} T_x(i)=\bigoplus_{i=0}^{x-1} H_x(i,\phi(i))
\]
\end{lemma}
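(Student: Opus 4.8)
The plan is to view $K_{(x:3)}$ as the edge-disjoint union of three complete bipartite graphs $B_{01}$, $B_{12}$, $B_{20}$, where $B_{ab}$ consists of all edges between $G_a$ and $G_b$, and then to use the fact that each $B_{ab}$ is itself the edge-disjoint union of the $x$ perfect matchings $M_{ab}(d)$ with $d\in\Z_x$, where $M_{ab}(d)$ collects the edges of difference $d$ between $G_a$ and $G_b$. With this notation, $T_x(i)=M_{01}(2i)\oplus M_{12}(-i)\oplus M_{20}(-i)$ and $H_x(i,\phi(i))=M_{01}(2i)\oplus M_{12}(-i)\oplus M_{20}(-\phi(i))$, so both claimed decompositions reduce to statements about which matchings get used and how often.

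For the first equality, the key observation is that the three ``coordinate maps'' $i\mapsto 2i$, $i\mapsto -i$, and $i\mapsto -i$ are each permutations of $\Z_x$; the only nontrivial point is that $i\mapsto 2i$ is a bijection, which holds precisely because $\gcd(2,x)=1$ when $x$ is odd (this is the one place the parity hypothesis is used). Two consequences follow immediately. First, if $i\neq i'$ then $T_x(i)$ and $T_x(i')$ share no edge: a common edge would lie in some $B_{ab}$ and force equality of the corresponding matching indices, e.g.\ $2i=2i'$ and hence $i=i'$. Second, as $i$ runs over $\{0,\dots,x-1\}$ each of the matchings $M_{01}(d)$, $M_{12}(d)$, $M_{20}(d)$ ($d\in\Z_x$) is used exactly once. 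Therefore $\bigoplus_{i} T_x(i)$ is edge-disjoint and covers every edge of each $B_{ab}$, i.e.\ it equals $K_{(x:3)}$. A quick count ($x$ triangle factors, each with $3x$ edges, totalling $3x^2=|E(K_{(x:3)})|$) serves as a consistency check.

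For the second equality the argument is the same, except the third coordinate map is now $i\mapsto -\phi(i)$, which is a permutation of $\Z_x$ because $\phi$ is a bijection of $\{0,\dots,x-1\}$ and negation is a bijection of $\Z_x$. Edge-disjointness of the $H_x(i,\phi(i))$ follows as before (a shared $B_{20}$-edge would force $\phi(i)=\phi(i')$, hence $i=i'$), and coverage follows because $\{2i\}$, $\{-i\}$, and $\{-\phi(i)\}$ each exhaust $\Z_x$. Note that for this lemma we do \emph{not} need $H_x(i,\phi(i))$ to be a single Hamilton cycle: each $H_x(i,j)$ is automatically $2$-regular, and only the partition of the edge set matters, so no condition like $\gcd(x,i-\phi(i))=1$ is imposed here.

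I do not anticipate a real obstacle; the only care needed is to make explicit why oddness of $x$ enters (invertibility of $2$ modulo $x$) and to notice that the permutation property of the coordinate maps simultaneously yields both edge-disjointness and exhaustiveness. If extra rigor is wanted, one can package the whole argument as a single well-defined function $E(K_{(x:3)})\to\{0,\dots,x-1\}$ sending an $(a,b)$-edge of difference $d$ to the unique index $i$ determined by the relevant map, and check that its fiber over $i$ is exactly $E(T_x(i))$ (respectively $E(H_x(i,\phi(i)))$).
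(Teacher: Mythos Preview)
Your proof is correct and follows essentially the same route as the paper's: both arguments reduce the claim to checking that, between each pair of parts, the set of differences $\{2i\}$, $\{-i\}$, and $\{-i\}$ (respectively $\{-\phi(i)\}$) exhaust $\Z_x$ exactly once, with the oddness of $x$ needed only for the bijectivity of $i\mapsto 2i$. Your matching notation $M_{ab}(d)$ and the explicit remark that Hamiltonicity of $H_x(i,\phi(i))$ is irrelevant here are helpful clarifications, but the core idea is the same as in the paper.
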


\begin{proof}
To prove the first equality,
\[
K_{(x:3)}=\bigoplus_{i=0}^{x-1} T_x(i)
\]
we need to show that between each pair of parts in $K_{(x:3)}$, each difference is covered by the edges in one of the triangle factors exactly once. It is clear that edges of difference $k$ between $G_1$ and $G_2$  and between $G_2$ and $G_0$ are covered in $T_x(k)$. Now consider groups $G_{0}$ and $G_{1}$. Each factor $T_{x}(k)$ uses the difference $2k$. Because $gcd(x,2)=1$, the order of $2$ in the cyclic group $\mathbb{Z}_x$ is $x$. So it follows that any $k$ modulo $x$ can be written as $2k'$, and thus the difference $k$ between $G_0$ and $G_1$ is covered in $T_x(k')$. 
Notice that we cover the edges of exactly one difference between any two parts per subgraph, and we only have $x$ subgraphs. This together with the fact that we are covering all the differences imply that we cover each difference exactly once. Thus it is equivalent to decomposing $K_{(x:3)}$.

The second equality 
\[
\bigoplus_{i=0}^{x-1} T_x(i)=\bigoplus_{i=0}^{x-1} H_x(i,\phi(i))
\]
is true because we again cover each difference between any pair of parts exactly once by the edges in the factors.
\end{proof}

Notice that the subgraph $H_x(i,i)$ is the same as $T_x(i)$. Therefore, decomposing $K_{(x:3)}$ into $s$ Hamilton cycles and $x-s$ triangle factors is equivalent to finding a bijection $\phi$ such that $\gcd(x,i-\phi(i))=1$ for $s$ elements of $\{0,1,\ldots,x-1\}$ and $\phi(i)=i$ for the rest.

\begin{theorem}\label{phitheorem}
\label{K(x:3)}
Let $x$ be odd and let $s\in \{0,2,3,\ldots,x\}$. Then:
\begin{packed_item}
\item there exists a bijection $\phi$ on the set $\{0,1,\ldots,x-1\}$ with $\gcd(x,i-\phi(i))=1$ for $s$ elements and $r=x-s$ fixed points; and 
\item $K_{(x:3)}$ can be decomposed into $s$ Hamiltonian cycles and $r=x-s$ triangle factors.
\end{packed_item}
\end{theorem}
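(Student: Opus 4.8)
The plan is to reduce everything to the first bullet, since the second bullet follows immediately from the discussion preceding the theorem: once we have a bijection $\phi$ with $\gcd(x,i-\phi(i))=1$ on exactly $s$ elements and $\phi(i)=i$ on the other $r=x-s$ elements, Lemma~\ref{Alg1} gives $K_{(x:3)}=\bigoplus_{i=0}^{x-1} H_x(i,\phi(i))$, and each $H_x(i,\phi(i))$ is a Hamilton cycle when $\gcd(x,i-\phi(i))=1$ (by the second lemma on $H_x$) and equals $T_x(i)$, a triangle factor, when $\phi(i)=i$. So it suffices to construct the bijection.

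To construct $\phi$, I would first dispose of the trivial case $s=0$ by taking $\phi=\mathrm{id}$. For $s\geq 2$, pick any $S\subseteq\{0,1,\ldots,x-1\}$ with $|S|=s$ and let $\phi$ fix every element of the complement; the task is then to produce a permutation of $S$ with no fixed points and with $\gcd(x,i-\phi(i))=1$ for every $i\in S$. The simplest device is to relabel $S=\{a_0,a_1,\ldots,a_{s-1}\}$ and attempt the single $s$-cycle $\phi(a_k)=a_{k+1}$ (indices mod $s$); this is automatically fixed-point-free on $S$ since $s\geq 2$. I would in fact choose $S$ itself cleverly: since $x$ is odd, $\gcd(x,1)=\gcd(x,2)=1$, so differences of $\pm1$ or $\pm2$ are always coprime to $x$. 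Taking $S=\{0,1,\ldots,s-1\}$ and the cyclic permutation $\phi(k)=k+1$ for $0\le k\le s-2$, $\phi(s-1)=0$, all consecutive differences $k-\phi(k)=-1$ are coprime to $x$, and the single ``wrap'' difference is $(s-1)-0=s-1$, which need not be coprime to $x$. So the honest obstacle is exactly this one wrap-around difference.

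To handle the wrap difference I would split into cases on the parity/size of $s$ relative to $x$, or better, avoid it entirely: instead of a single $s$-cycle, build $\phi$ on $S=\{0,1,\dots,s-1\}$ as a product of transpositions and one short cycle. If $s$ is even, use $\phi=(0\,1)(2\,3)\cdots(s-2\;s-1)$; every displacement is $\pm1$, hence coprime to $x$, and there are no fixed points. If $s$ is odd (and $s\geq 3$), use the transpositions $(0\,1)(2\,3)\cdots(s-4\;s-3)$ together with the $3$-cycle $(s-3\;s-2\;s-1)$ on the last three points; the displacements inside the $3$-cycle are $\pm1$ and $\pm2$, all coprime to the odd number $x$, and again there are no fixed points. (For $s=3$ this is just the single $3$-cycle $(0\,1\,2)$.) Thus in every case $s\in\{0,2,3,\ldots,x\}$ we obtain the required $\phi$, which establishes the first bullet and hence the second.

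The only point needing a line of care is the case $s=1$, which the statement deliberately excludes: a bijection with exactly one non-fixed point is impossible, matching the exclusion of $s=1$. I would remark on this so the reader sees why $s=1$ is omitted. The main obstacle, as noted, is purely the wrap-around difference in a naive single-cycle construction, and the fix is to use only displacements $\pm1,\pm2$, which are guaranteed coprime to $x$ precisely because $x$ is odd — this is where the hypothesis is used, exactly as in the proof of Lemma~\ref{Alg1}.
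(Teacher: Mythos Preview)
Your argument is correct in spirit and uses exactly the same key idea as the paper: fix the complement of $\{0,1,\ldots,s-1\}$ and build a derangement on $\{0,1,\ldots,s-1\}$ whose displacements lie in $\{\pm 1,\pm 2\}$, which are automatically coprime to the odd integer $x$. The paper realizes this with a single $s$-cycle (explicitly, $0\to 2\to 4\to\cdots$ then back through the odds), whereas you use a product of adjacent transpositions together with one $3$-cycle when $s$ is odd; both work and yours is arguably cleaner.

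There is one indexing slip to fix: in the odd-$s$ case you write the transpositions as $(0\,1)(2\,3)\cdots(s-4\;\,s-3)$ and the $3$-cycle as $(s-3\;\,s-2\;\,s-1)$, which places $s-3$ in two cycles. The last transposition should be $(s-5\;\,s-4)$, so that the $(s-3)/2$ transpositions cover $\{0,1,\ldots,s-4\}$ and the $3$-cycle covers $\{s-3,s-2,s-1\}$; your parenthetical remark about $s=3$ confirms this is what you intended. With that correction the construction is complete and the deduction of the second bullet from Lemma~\ref{Alg1} and the lemma on $H_x(i,j)$ is exactly as in the paper.
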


\begin{proof}
If $s=0$ we just use the identity mapping. Let $2\leq s \leq x$, and let $e$ be the smallest integer such that $s \leq 2^e +1$. We have 
\[
2^{e-1}+1<s\leq \min\{2^e+1,x\}=t.
\]
Let $r=t-s$ and define $\phi$ as follows:
\[
\phi(i)=\left\lbrace \begin{array}{lcl} 

0 & \text{for}  & i=1\\
i+2 & \text{for}  & i\equiv 0 \pmod 2, 0\leq i \leq s-3\\
i-2 & \text{for}  & i\equiv 1 \pmod 2, 3\leq i \leq s-1 \\
s-2 & \text{for}  & i\equiv 0 \pmod 2, i=s-1\\
s-1 & \text{for}  & i\equiv 0 \pmod 2, i=s-2\\
i &\text{for} & s\leq i \leq x-1\\

\end{array}\right.
\]

It is an easy exercise to check that $\phi$ is a bijection with $r=x-s$ fixed points. Furthermore, for any non-fixed point we have $(i-\phi(i))\in \{\pm1,\pm2\}$ and, because $x$ is odd, $\gcd(x,i-\phi(i))=1$. Hence by Lemma~{\normalfont\ref{Alg1}},
\[
K_{(x:3)}=\bigoplus_{i=0}^{x-1} H_x(i,\phi(i))
\]
is a decomposition of $K_{(x:3)}$ into $s$ Hamiltonian cycles and $r=x-s$ triangle factors.
\end{proof}

Unfortunately this construction only works when $x$ is odd. For the cases when $x$ is even we can get a similar result, although only when $x=2\bar{x}$, with $\bar{x}$ odd. 

\subsection{When $x$ is Even}
\label{Even $x$}

In this subsection, we develop a construction similar to what is described in Section~{\normalfont\ref{Odd $x$}}. It relies on the following decomposition of $K_{(4:3)}$ into triangle factors.
Define $\Gamma(i)$ for $i\in \{0,1,2,3\}$ as follows.

\begin{center}
\begin{tikzpicture}[every node/.style={draw,shape=circle,fill=black}]
\draw (0,0) node [scale=.5] (000){};
\draw (1,0) node [scale=.5] (010){};
\draw (2,0) node [scale=.5] (020){};
\draw (0,-.5) node [scale=.5](001){};
\draw (1,-.5) node [scale=.5](011){};
\draw (2,-.5) node [scale=.5](021){};
\draw (0,-1) node [scale=.5](002){};
\draw (1,-1) node [scale=.5](012){};
\draw (2,-1) node [scale=.5](022){};
\draw (0,-1.5) node [scale=.5](003){};
\draw (1,-1.5) node [scale=.5](013){};
\draw (2,-1.5) node [scale=.5](023){};

\draw (000) to (010);
\draw (010) to (020);
\draw (020) [bend right=25, dashed] to (000);
\draw (001) to (013);
\draw (013) to (022);
\draw (022) [dashed] to (001);
\draw (002) to (011);
\draw (011) to (023);
\draw (023) [dashed] to (002);
\draw (003) to (012);
\draw (012) to (021);
\draw (021) [bend left=15, dashed] to (003);

\draw (6,0) node [scale=.5] (100){};
\draw (7,0) node [scale=.5] (110){};
\draw (8,0) node [scale=.5] (120){};
\draw (6,-.5) node [scale=.5](101){};
\draw (7,-.5) node [scale=.5](111){};
\draw (8,-.5) node [scale=.5](121){};
\draw (6,-1) node [scale=.5](102){};
\draw (7,-1) node [scale=.5](112){};
\draw (8,-1) node [scale=.5](122){};
\draw (6,-1.5) node [scale=.5](103){};
\draw (7,-1.5) node [scale=.5](113){};
\draw (8,-1.5) node [scale=.5](123){};

\draw (101) to (111);
\draw (111) to (121);
\draw (121) [bend right=25, dashed] to (101);
\draw (100) to (112);
\draw (112) to (123);
\draw (123) [dashed] to (100);
\draw (102) to (113);
\draw (113) to (120);
\draw (120) [bend left=13, dashed] to (102);
\draw (103) to (110);
\draw (110) to (122);
\draw (122) [dashed] to (103);

\draw (0,-3) node [scale=.5] (200){};
\draw (1,-3) node [scale=.5] (210){};
\draw (2,-3) node [scale=.5] (220){};
\draw (0,-3.5) node [scale=.5](201){};
\draw (1,-3.5) node [scale=.5](211){};
\draw (2,-3.5) node [scale=.5](221){};
\draw (0,-4) node [scale=.5](202){};
\draw (1,-4) node [scale=.5](212){};
\draw (2,-4) node [scale=.5](222){};
\draw (0,-4.5) node [scale=.5](203){};
\draw (1,-4.5) node [scale=.5](213){};
\draw (2,-4.5) node [scale=.5](223){};

\draw (202) to (212);
\draw (212) to (222);
\draw (222) [bend right=25, dashed] to (202);
\draw (201) to (210);
\draw (210) to (223);
\draw (223) [bend left=15, dashed] to (201);
\draw (200) to (213);
\draw (213) to (221);
\draw (221) [dashed] to (200);
\draw (203) to (211);
\draw (211) to (220);
\draw (220) [bend left=25, dashed] to (203);

\draw (6,-3) node [scale=.5] (300){};
\draw (7,-3) node [scale=.5] (310){};
\draw (8,-3) node [scale=.5] (320){};
\draw (6,-3.5) node [scale=.5](301){};
\draw (7,-3.5) node [scale=.5](311){};
\draw (8,-3.5) node [scale=.5](321){};
\draw (6,-4) node [scale=.5](302){};
\draw (7,-4) node [scale=.5](312){};
\draw (8,-4) node [scale=.5](322){};
\draw (6,-4.5) node [scale=.5](303){};
\draw (7,-4.5) node [scale=.5](313){};
\draw (8,-4.5) node [scale=.5](323){};

\draw (303) to (313);
\draw (313) to (323);
\draw (323) [bend left=25, dashed] to (303);
\draw (301) to (312);
\draw (312) to (320);
\draw (320) [dashed] to (301);
\draw (302) to (310);
\draw (310) to (321);
\draw (321) [dashed] to (302);
\draw (300) to (311);
\draw (311) to (322);
\draw (322) [bend left=15, dashed] to (300);
\node[left=10pt,fill=none,draw=none] at (0,-.75) {$\Gamma(0)=$};
\node[left=10pt,fill=none,draw=none] at (6,-.75) {$\Gamma(1)=$};
\node[left=10pt,fill=none,draw=none] at (0,-3.75) {$\Gamma(2)=$};
\node[left=10pt,fill=none,draw=none] at (6,-3.75) {$\Gamma(3)=$};
\end{tikzpicture}
\end{center}
Note that the edges that join $G_0$ to $G_2$ are dashed since they will need to be distinguished from the other two edges in each $C_3$. 
It is easy to see that $\bigoplus_{i=0}^{3} \Gamma_{i}$ is a $C_{3}$-factorization of $K_{(4:3)}$.

\begin{lemma}
\label{K444}
There exist a decomposition of $K_{(4:3)}$ into $s$ $C_6$-factors and $4-s$ $C_3$-factors for any $s\in \{0,2,3,4\}$.
\end{lemma}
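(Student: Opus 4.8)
The plan is to obtain the required $C_6$-factors from the four triangle factors $\Gamma(0),\Gamma(1),\Gamma(2),\Gamma(3)$ by \emph{redistributing their dashed ($G_0$--$G_2$) edges}, keeping the untouched $\Gamma(i)$'s as $C_3$-factors. Since each triangle of $\Gamma(i)$ meets every part exactly once, I can write its triangles as $\{(0,k),(1,\sigma_i(k)),(2,\tau_i(k))\}$ for $k\in\Z_4$ and permutations $\sigma_i,\tau_i$ of $\{0,1,2,3\}$; only the $\tau_i$ will matter. Reading the figure gives $\tau_0=(1\,2\,3)$, $\tau_1=(0\,3\,2)$, $\tau_2=(0\,1\,3)$, $\tau_3=(0\,2\,1)$, and a short computation shows that $\{\tau_0,\tau_1,\tau_2,\tau_3\}$ is a right coset of the Klein four-group $V=\{\mathrm{id},(0\,1)(2\,3),(0\,2)(1\,3),(0\,3)(1\,2)\}$; consequently $\tau_a^{-1}\tau_b$ is a fixed-point-free involution whenever $a\neq b$.

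The basic move is the following. In $\Gamma(a)$, replace the four dashed $G_0$--$G_2$ edges by the four dashed edges of $\Gamma(b)$; every vertex still has degree $2$, so the result is a $2$-factor, and since the dashed edges have only been permuted among the factors involved, the full collection of factors is still a decomposition of $K_{(4:3)}$. Following a component from $(0,k)$, the retained edges of $\Gamma(a)$ take us $(0,k)\to(1,\sigma_a(k))\to(2,\tau_a(k))$, and then a dashed edge of $\Gamma(b)$ returns us to $(0,\tau_b^{-1}\tau_a(k))$. Thus the component lengths are $3\ell$ as $\ell$ ranges over the cycle lengths of $\tau_b^{-1}\tau_a$; if $\tau_b^{-1}\tau_a$ is a fixed-point-free involution, the $2$-factor is a disjoint union of two $6$-cycles covering all $12$ vertices, i.e.\ a $C_6$-factor.

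Now the cases. For $s=0$ use $\bigoplus_{i=0}^{3}\Gamma(i)$ directly. For $s=2$, apply the basic move to the pair $\Gamma(0),\Gamma(1)$ (swap their dashed edges), turning both into $C_6$-factors while $\Gamma(2),\Gamma(3)$ stay as $C_3$-factors. For $s=4$, do this for the two pairs $\{\Gamma(0),\Gamma(1)\}$ and $\{\Gamma(2),\Gamma(3)\}$ (or simply quote Theorem~\ref{equip}). For $s=3$, cyclically rotate the dashed edges of $\Gamma(0),\Gamma(1),\Gamma(2)$: $\Gamma(0)$ receives the dashed edges of $\Gamma(1)$, $\Gamma(1)$ those of $\Gamma(2)$, $\Gamma(2)$ those of $\Gamma(0)$, and $\Gamma(3)$ is left alone. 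The three corresponding return maps are $\tau_1^{-1}\tau_0$, $\tau_2^{-1}\tau_1$, $\tau_0^{-1}\tau_2$, each a fixed-point-free involution by the coset property, so all three modified factors are $C_6$-factors; together with $\Gamma(3)$ this gives the desired decomposition into $3$ $C_6$-factors and one $C_3$-factor.

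The one genuinely non-routine point is $s=3$: since pairwise swaps always produce an even number of $C_6$-factors, one is forced into a three-way rotation, and what makes it work is precisely that $V$ is closed under products, so \emph{all three} of $\tau_1^{-1}\tau_0,\tau_2^{-1}\tau_1,\tau_0^{-1}\tau_2$ land in $V\setminus\{\mathrm{id}\}$ simultaneously. Everything else reduces to the (tedious but mechanical) verification that the $\Gamma(i)$ drawn in the figure really do form a $C_3$-factorization of $K_{(4:3)}$ and that the listed values of $\tau_i$ are correct.
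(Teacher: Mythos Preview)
Your proof is correct and follows the same strategy as the paper: build $C_6$-factors by permuting the dashed $G_0$--$G_2$ edges among the $\Gamma(i)$'s (the paper's $\Lambda(\alpha,\beta)$ is exactly your ``$\Gamma(\alpha)$ with the dashed edges of $\Gamma(\beta)$''), using a swap for $s=2$, a $3$-cycle rotation for $s=3$, and for $s=4$ the paper uses a $4$-cycle rotation where you use two swaps. Your Klein four-group coset observation is a genuine enrichment: the paper merely asserts that $\Lambda(\alpha,\beta)$ is a $C_6$-factor whenever $\alpha\neq\beta$, whereas your computation of the return map $\tau_b^{-1}\tau_a$ and the fact that $V\trianglelefteq S_4$ actually proves it uniformly for all pairs.
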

\begin{proof}
Consider the $C_3$ factorization of $K_{(4:3)}$, $\bigoplus_{i=0}^{3} \Gamma_{i}$. 
Let $\Lambda(\alpha,\beta)$ be the graph that has edges between $G_0$ (the first column) and $G_1$ (the second column) from $\Gamma(\alpha)$, has edges between $G_1$ and $G_2$ from $\Gamma(\alpha)$, and has dashed edges from $\Gamma(\beta)$. Notice that if $\alpha\neq \beta$ then $\Lambda(\alpha,\beta)$ is a union of cycles of size $6$.

This way we can get 2 $C_6$-factors by using $\Lambda(0,1)$ and $\Lambda(1,0)$ instead of $\Gamma(0)$ and $\Gamma(1)$ . We can get 3 $C_6$-factors by using edges $\Lambda(0,1)$, $\Lambda(1,2)$ and $\Lambda(2,1)$ instead of $\Gamma(0)$, $\Gamma(1)$ and $\Gamma(2)$. And finally we can get 4 $C_6$-factors by using $\Lambda(0,1)$, $\Lambda(1,2)$, $\Lambda(2,3)$ and $\Lambda(3,0)$.
This construction gives the desired decompositions.
\end{proof}

For $\bar{x}=1$, Lemma~{\normalfont\ref{K444}} gives a decomposition of $K_{(4\bar{x}:3)}$ into triangle factors and $C_{6\bar{x}}$-factors. We will extend this result to work on any $K_{(4\bar{x}:3)}$ where $\bar{x}>1$ and odd.
We are going to define two types of subgraphs, $T_{2\bar{x}}(\alpha,i)$ and $H_{2\bar{x}}(\alpha,i)(\beta,j)$ with a similar relation as the one between $\Gamma(\alpha)$ and $\Lambda(\alpha,\beta)$ (or $T_{x}(i)$ and $H_{x}(i,j)$ from Lemma {\normalfont\ref{Alg1}}).
Take $K_{(4:3)}$, and give weight $\bar{x}$ to each vertex. Now each triangle in $\bigoplus_{i=0}^3\Gamma_i$ becomes a copy of $K_{(\bar{x}:3)}$. Decompose these copies of $K_{(\bar{x}:3)}$ into triangles using Lemma {\normalfont\ref{Alg1}}. This gives a decomposition of $K_{(4\bar{x}:3)}$ into triangle factors.

Let $T_{2\bar{x}}(\alpha,i)$ be a triangle factor of $K_{(4\bar{x}:3)}$, where $0\leq \alpha \leq 3$ tells us from which $\Gamma(\alpha)$ it came and $0\leq i\leq \bar{x}-1$ tells us from which triangle factor $T_{\bar{x}}(i)$ of $K_{(\bar{x}:3)}$ it came.
Define $H_{2\bar{x}}(\alpha,i)(\beta,j)$ as the graph obtained by taking $T_{2\bar{x}}(\alpha,i)$ and replacing the edges between $G_0$ (the first column of $K_{(4\bar{x}:3)}$) and $G_2$ (the third column of $K_{(4\bar{x}:3)}$) with the same edges from $T_{2\bar{x}}(\beta,j)$. In this way we have that $H_{2\bar{x}}(\alpha,i)(\beta,j)\oplus H_{2\bar{x}}(\beta,j)(\alpha,i)=T_{2\bar{x}}(\alpha,i)\oplus T_{2\bar{x}}(\beta,j)$.

If $g\in H_{2\bar{x}}(\alpha,i)(\beta,j)$ is a vertex, we can think of it as a pair of coordinates $g=(g_1,g_2)$, with $g_1\in V(K_{(4:3)})$ and $g_2\in V(K_{(\bar{x}:3)})$. This is telling us from which vertex in $V(K_{(4:3)})$ and which vertex in $V(K_{(\bar{x}:3)})$ our vertex $g$ came.
Notice that when $\alpha\neq \beta$ the $K_{(4:3)}$ structure of $H_{2\bar{x}}(\alpha,i)(\beta,j)$ is a $C_{6}$-factor. This means that if we move through a cycle in $H_{2\bar{x}}(\alpha,i)(\beta,j)$ containing the vertex $(g_1,g_2)$, we will go through a vertex with first coordinate $g_1$ every six vertices.
In a similar fashion, when $\gcd (i-j,\bar{x})=1$ the $K_{(\bar{x}:3)}$ structure of the graph is a $C_{3\bar{x}}$-factor. This means that if we move through a cycle in $H_{2\bar{x}}(\alpha,i)(\beta,j)$ containing the vertex $(g_1,g_2)$, we will go through a vertex with second coordinate $g_2$ every $3\bar{x}$ vertices. Then if $\alpha\neq \beta$ and $\gcd (j-\beta,\bar{x})=1$, we are going to go through $(g_1,g_2)$ every $\lcm(6,3\bar{x})=6\bar{x}$ vertices. Hence $H_{2\bar{x}}(\alpha,i)(\beta,j)$ is a $C_{6\bar{x}}$-factor.

Let $\psi$ be a bijection on $\{(\alpha,i)|0\leq \alpha \leq 3, 0\leq i\leq \bar{x}-1\}$.
The previous discussion leads us to the following result.

\begin{lemma}\label{thislemma}
Let $\bar{x}$ be odd. Let $s$ and $r$ be non-negative integers such that $s+r=4\bar{x}$. If $\psi$ satisfies the following:
\begin{packed_item}
\item $\psi(\alpha,i)=(\alpha,i)$ for $r$ pairs $(\alpha,i)$; and 
\item $\psi(\alpha,i)=(\beta,j)$ with $\alpha\neq \beta$ and $\gcd (i-j,\bar{x})=1$ for the $s$ remaining pairs;
\end{packed_item}
then $K_{(4\bar{x}:3)}=\bigoplus H_{2\bar{x}}(\alpha,i)(\psi(\alpha,i))$ is a decomposition of $K_{(4\bar{x}:3)}$ into $r$ triangle factors and $s$ $C_{6\bar{x}}$-factors.
\end{lemma}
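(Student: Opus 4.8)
The plan is to start from the triangle-factorization $K_{(4\bar x:3)}=\bigoplus_{\alpha=0}^{3}\bigoplus_{i=0}^{\bar x-1}T_{2\bar x}(\alpha,i)$ constructed in the paragraph preceding the statement (give weight $\bar x$ to each vertex of $K_{(4:3)}$, blow up each triangle of $\bigoplus_i\Gamma_i$ into a copy of $K_{(\bar x:3)}$, and decompose each such copy into triangle factors via Lemma~\ref{Alg1}), and to track what happens to this decomposition when, for each pair $(\alpha,i)$, we replace the $G_0$--$G_2$ edges of $T_{2\bar x}(\alpha,i)$ by those of $T_{2\bar x}(\psi(\alpha,i))$. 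First I would observe that in any triangle factor of $K_{(4\bar x:3)}$ every triangle has exactly one vertex in each part, so its $G_0$--$G_2$ edges form a perfect matching between $G_0$ and $G_2$; deleting one such matching and inserting another therefore leaves every vertex with degree $2$, so each $H_{2\bar x}(\alpha,i)(\psi(\alpha,i))$ is a $2$-factor of $K_{(4\bar x:3)}$.

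Next I would verify that $\bigoplus_{\alpha,i}H_{2\bar x}(\alpha,i)(\psi(\alpha,i))$ is still a decomposition of $K_{(4\bar x:3)}$. The $G_0$--$G_1$ and $G_1$--$G_2$ edges of $H_{2\bar x}(\alpha,i)(\psi(\alpha,i))$ coincide with those of $T_{2\bar x}(\alpha,i)$, so as $(\alpha,i)$ runs over all $4\bar x$ pairs these cover every $G_0$--$G_1$ edge and every $G_1$--$G_2$ edge exactly once. The $G_0$--$G_2$ edges of $H_{2\bar x}(\alpha,i)(\psi(\alpha,i))$ are those of $T_{2\bar x}(\psi(\alpha,i))$; since $\psi$ is a bijection, as $(\alpha,i)$ runs over all pairs so does $\psi(\alpha,i)$, so these cover every $G_0$--$G_2$ edge exactly once. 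Hence the $4\bar x$ graphs are pairwise edge-disjoint and their union is $K_{(4\bar x:3)}$.

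It then remains to classify each $H_{2\bar x}(\alpha,i)(\psi(\alpha,i))$. For the $r$ fixed points $\psi(\alpha,i)=(\alpha,i)$ we have $H_{2\bar x}(\alpha,i)(\alpha,i)=T_{2\bar x}(\alpha,i)$, a triangle factor. For each of the $s$ remaining pairs, write $\psi(\alpha,i)=(\beta,j)$ with $\alpha\neq\beta$ and $\gcd(i-j,\bar x)=1$, and use that every vertex carries a coordinate pair $(g_1,g_2)$ with $g_1\in V(K_{(4:3)})$ and $g_2\in V(K_{(\bar x:3)})$. Walking along a cycle of $H_{2\bar x}(\alpha,i)(\beta,j)$, the first coordinates follow the $K_{(4:3)}$-structure $\Lambda(\alpha,\beta)$, which is a $C_6$-factor because $\alpha\neq\beta$, so $g_1$ recurs every $6$ steps; the second coordinates follow the $K_{(\bar x:3)}$-structure $H_{\bar x}(i,j)$, which is a single $C_{3\bar x}$ because $\gcd(\bar x,i-j)=1$ (by the earlier lemma that $H_x(i,j)$ is a Hamilton cycle of $K_{(x:3)}$ when $\gcd(x,i-j)=1$), so $g_2$ recurs every $3\bar x$ steps. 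Consequently the whole vertex recurs every $\lcm(6,3\bar x)$ steps, and since $\bar x$ is odd this equals $6\bar x$; every cycle of $H_{2\bar x}(\alpha,i)(\beta,j)$ thus has length $6\bar x$, so it is a $C_{6\bar x}$-factor. Collecting the pieces gives the decomposition into $r$ triangle factors and $s$ $C_{6\bar x}$-factors.

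I expect the only delicate point to be the claim that the first- and second-coordinate recurrence lengths combine to their least common multiple: this needs the explicit coordinatewise description of the edges of $H_{2\bar x}$ inherited from the $K_{(4:3)}\times K_{(\bar x:3)}$ blow-up, so that the two projected walks really are independent, together with the elementary identity $\lcm(6,3\bar x)=6\bar x$ for odd $\bar x$, which is exactly where the hypothesis that $\bar x$ be odd enters. The remaining steps ($2$-regularity of the $H$-graphs and the bijectivity argument that the union exhausts $K_{(4\bar x:3)}$) are routine bookkeeping.
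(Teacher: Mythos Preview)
Your proposal is correct and follows essentially the same approach as the paper: the paper's proof simply observes that $H_{2\bar x}(\alpha,i)(\alpha,i)=T_{2\bar x}(\alpha,i)$ is a triangle factor, appeals to the discussion immediately preceding the lemma for the $C_{6\bar x}$-factor claim (which is precisely your $\lcm(6,3\bar x)=6\bar x$ argument via the two coordinate projections), and concludes. Your write-up is in fact more explicit than the paper's in one respect: you spell out the bijectivity-of-$\psi$ argument showing that the $G_0$--$G_2$ edges are still covered exactly once, whereas the paper leaves this to the reader after noting the pairwise identity $H_{2\bar x}(\alpha,i)(\beta,j)\oplus H_{2\bar x}(\beta,j)(\alpha,i)=T_{2\bar x}(\alpha,i)\oplus T_{2\bar x}(\beta,j)$.
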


\begin{proof}
Notice that $H_{2\bar{x}}(\alpha,i)(\alpha,i)=T_{2\bar{x}}(\alpha,i)$, so if $\psi(\alpha,i)=(\alpha,i)$, $H_{2\bar{x}}(\alpha,i)(\psi(\alpha,i))$ is a triangle factor. When $\psi(\alpha,i)=(\beta,j)$ with $\alpha \neq \beta$ and $\gcd (i-j,\bar{x})=1$, by the discussion preceding the lemma, $H_{2\bar{x}}(\alpha,i)(\psi(\alpha,i))$ is a $C_{6\bar{x}}$-factor.
Therefore $K_{(4\bar{x}:3)}=\bigoplus H_{2\bar{x}}(\alpha,i)(\psi(\alpha,i))$ is a decomposition of $K_{(4\bar{x}:3)}$ into $r$ triangle factors and $s$ $C_{6\bar{x}}$-factors.
\end{proof}

Thanks to Lemma~{\normalfont\ref{thislemma}} we only need to show that for any $r\in \{0,1,\ldots,4\bar{x}-2,4\bar{x}\}$ we have a bijection $\psi$ satisfying the conditions of the lemma and with $r$ fixed points.

\begin{theorem}\label{Alg2}
Let $\bar{x}$ be odd and $s\in \{0,2,3,\ldots,4\bar{x}-1,4\bar{x}\}$, then:
\begin{packed_item}
\item There exists a bijection $\psi$ satisfying the conditions of Lemma {\normalfont\ref{thislemma}} with $r=4\bar{x}-s$ fixed points.
\item $K_{(4\bar{x}:3)}$ can be decomposed into $s$ $C_{6\bar{x}}$-factors and $r$ triangle factors. 
\end{packed_item}
\end{theorem}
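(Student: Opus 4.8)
The plan is to reduce the whole statement to the first bullet: once a bijection $\psi$ with the required properties and $r=4\bar x-s$ fixed points is constructed, the decomposition of $K_{(4\bar x:3)}$ into $s$ $C_{6\bar x}$-factors and $r$ triangle factors is immediate from Lemma~\ref{thislemma}. The case $s=0$ is the identity map, and I will remark in passing that $s=1$ is genuinely impossible (a single non-fixed pair $(\alpha,i)$ must map to a pair $(\beta,j)$ with $\beta\neq\alpha$, and then $(\beta,j)$ is also non-fixed), which is consistent with the stated range. For $\bar x=1$ the decomposition is already Lemma~\ref{K444}, so the case of interest is $\bar x\geq 3$ odd, although the construction below works verbatim for $\bar x=1$ as well.

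View the domain $\{(\alpha,i):0\leq\alpha\leq 3,\ 0\leq i\leq\bar x-1\}$ as a $4\times\bar x$ array with $\alpha$ the column and $i$ the row, rows read mod $\bar x$. I will assemble $\psi$ as a product of pairwise disjoint cycles whose nontrivial parts are transpositions together with at most one $3$-cycle; the fixed points are exactly the cells outside the supports, so $\psi$ will have $4\bar x$ minus (total support size) fixed points, and the support size will be made to equal $s$. There are two building blocks, each of which satisfies the hypotheses of Lemma~\ref{thislemma} on its support: (i) the transposition swapping $(0,i)$ with $(1,i{+}1)$, or $(2,i)$ with $(3,i{+}1)$ — the two cells lie in different columns and the row difference is $\pm 1$, hence coprime to $\bar x$; and (ii) the $3$-cycle $(0,0)\to(1,1)\to(2,2)\to(0,0)$, whose three steps have row differences $-1,-1,2$, all coprime to $\bar x$ precisely because $\bar x$ is odd. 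This last point is the only essential use of oddness.

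To realize an even $s=2k$ with $0\leq k\leq 2\bar x$, I take $\min(k,\bar x)$ of the transpositions $(0,i)\leftrightarrow(1,i{+}1)$ and $k-\min(k,\bar x)$ of the transpositions $(2,i)\leftrightarrow(3,i{+}1)$; these are automatically pairwise disjoint, and each column pair supplies $\bar x$ of them, so every such $k$ is attainable. To realize an odd $s=2k+1$ with $1\leq k\leq 2\bar x-1$, I use the $3$-cycle from (ii) together with $k-1$ transpositions of type (i) whose cells avoid $(0,0),(1,1),(2,2)$; counting shows $\bar x-1$ such transpositions remain in the column pair $\{0,1\}$ and $\bar x-1$ in $\{2,3\}$, giving $2\bar x-2\geq k-1$ choices. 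In every case the support has size exactly $s$, so $\psi$ has $r=4\bar x-s$ fixed points, and Lemma~\ref{thislemma} finishes the proof. The only real work — and the step I expect to need the most care — is this disjointness-and-counting bookkeeping ensuring that every value of $s$ in $\{0,2,3,\ldots,4\bar x\}$ is hit; the verification that each block meets the $\gcd$ condition is immediate once $\bar x$ is odd.
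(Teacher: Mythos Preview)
Your proof is correct, but it takes a different route from the paper's. The paper writes $s=s_0+s_1+s_2+s_3$ with each $s_m\in\{0,2,3,\ldots,\bar{x}\}$ and then, for each $m\in\{0,1,2,3\}$, applies the same permutation recipe as in Theorem~\ref{phitheorem} along the ``diagonal'' $\{(i+m,i):0\le i\le\bar{x}-1\}$, producing non-fixed differences in $\{(\pm 1,\pm 1),(\pm 2,\pm 2)\}$; oddness of $\bar{x}$ is used so that the second-coordinate difference $\pm 2$ is coprime to $\bar{x}$. Your construction is more elementary: you build $\psi$ out of disjoint transpositions $(0,i)\leftrightarrow(1,i{+}1)$ and $(2,i)\leftrightarrow(3,i{+}1)$, together with a single $3$-cycle $(0,0)(1,1)(2,2)$ to handle odd $s$, and the only place oddness enters is the row difference $2$ in that $3$-cycle. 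The paper's approach has the virtue of being a uniform extension of the odd-$x$ case, while yours is shorter, the disjointness bookkeeping is cleaner (the two column pairs never interact except through the one $3$-cycle), and the counting that every $s\in\{0,2,3,\ldots,4\bar{x}\}$ is attainable is immediate from $\bar{x}+\bar{x}=2\bar{x}$ transpositions available for the even case and $(\bar{x}-1)+(\bar{x}-1)=2\bar{x}-2$ remaining after the $3$-cycle for the odd case.
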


\begin{proof}
If $s=0$ we just use the identity mapping.

If $2\leq s\leq 4\bar{x}$ we let $s_0$, $s_1$, $s_2$, $s_3\in\{0,2,3\ldots,\bar{x}-1\}$ be such that $s=s_0+s_1+s_2+s_3$.
We define $\psi$ as follows, where $m\in \{0,1,2,3\}$ and $i+m$ is taken$\pmod 4$:

\[
\psi(i+m,i)=\left\lbrace \begin{array}{lcl} 
(m,0) & \text{for}  & i=1\\
(i+m+2,i+2) & \text{for}  & i\equiv 0 \pmod 2, 0\leq i \leq s_m -3\\
(i+m-2,i-2) & \text{for}  & i\equiv 1 \pmod 2, 3\leq i \leq s_m -1\\
(s_m+m-2,s_m-2) & \text{for}  & i\equiv 0 \pmod 2, i=s_m-1\\
(s_m+m-1,s_m-1) & \text{for}  & i\equiv 0 \pmod 2, i=s_m-2\\
(i+m,i) &\text{for} & s_m\leq i \leq \bar{x}-1\\

\end{array}\right.
\]

It is an easy exercise to check that $\psi$ is a bijection with $4\bar{x}-(s_0+s_1+s_2+s_3)=r$ fixed points. Notice that $\psi(\alpha,i)-(\alpha,i)\in \{(0,0),(\pm 1,\pm 1),(\pm 2,\pm 2)\}$. This gives that if $\psi(\alpha,i)=(\beta,j)$ is not a fixed point of $\psi$, $\alpha\neq \beta$ and $\gcd(i-j,\bar{x})=1$.

Hence by Lemma~\ref{thislemma}
\[
K_{(4\bar{x}:3)}=\bigoplus H_{2\bar{x}}(\alpha,i)(\psi(\alpha,i))
\]
is a decomposition of $K_{(4\bar{x}:3)}$ into $s$ $C_{6\bar{x}}$-factors and $4\bar{x}-s$ triangle factors.
\end{proof}

\subsection{A Weighting Construction}
\label{A Weighting Construction}

A \emph{group divisible design} $(k,\lambda)\-\GDD(h^u)$ is a triple $(\mathcal{V},\mathcal{G},\mathcal{B})$ where $\mathcal{V}$ is a finite set of size $v=hu$, $\mathcal{G}$ is a partition of $\mathcal{V}$ into $u$ \emph{groups} each containing $h$ elements, and $\mathcal{B}$ is a collection of $k$ element subsets of $\mathcal{V}$ called \emph{blocks} which satisfy the following properties.
\begin{packed_item}
 \item If $B\in \mathcal{B}$, then $|B|=k$.
 \item If a pair of elements from $\mathcal{V}$ appear in the same group, then the pair cannot be in any block.
 \item Two points that are not in the same group, called a \emph{transverse pair}, appear in exactly $\lambda$ blocks.
 \item $|\mathcal{G}|>1$.
\end{packed_item}
These groups are not to be confused with the cyclic groups that were discussed earlier, which are algebraic groups.
A \emph{resolvable $\GDD$} (RGDD) has the additional condition that the blocks can be partitioned into parallel classes such that each element of $\mathcal{V}$ appears exactly once in each parallel class.
If $\lambda=1$, we refer to the RGDD as a $k$-RGDD$(h^u)$. In this paper, we will only talk about RGDDs
with $\lambda=1$. Necessary and sufficient conditions for the existence of $3$-RGDD$(h^u)$s have been established except in a finite number of cases.

\begin{theorem}\label{3rgdd}
 {\normalfont\cite{R}}  A $(3,\lambda)$-RGDD$(h^u)$ exists if and only if $u\geq 3,\lambda h(u-1)$ is even, $hu\equiv 0\pmod{3}$, and $(\lambda,h,u)\not\in\{(1,2,6),(1,6,3) \}\bigcup\{(2j+1,2,3), (4j+2,1,6):j\geq 0\}$.
\end{theorem}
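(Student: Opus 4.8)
The plan is to establish the necessary conditions by routine counting and then obtain sufficiency through a recursive construction seeded by a finite library of small designs.

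First I would dispatch necessity. In a $3$-RGDD$(h^u)$ every parallel class partitions the $v=hu$ points into triples, so $3\mid hu$. Fixing a point $p$, each transverse pair through $p$ occurs in $\lambda$ blocks while each block through $p$ contains two such pairs, so the number of blocks—hence parallel classes—through $p$ equals $\lambda h(u-1)/2$, forcing $\lambda h(u-1)$ even; and since a transverse triple meets three distinct groups, $u\geq 3$. The sporadic non-existence of $(\lambda,h,u)\in\{(1,2,6),(1,6,3)\}$ and of the families $\{(2j+1,2,3),(4j+2,1,6):j\geq 0\}$ I would handle by small ad hoc arguments: these are exactly the tiny values of $u$ where a direct counting or case contradiction is available (in spirit like the classical obstruction at $v=9$ for Kirkman triple systems).

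For sufficiency I would work in three layers. (i) \emph{Base ingredients:} build a finite stock of small designs directly—Kirkman triple systems $KTS(v)=3$-RGDD$(1^v)$ for $v\equiv 3\pmod 6$, small $3$-RGDD$(h^u)$ for modest $u$, and small Kirkman ($\{3\}$-)frames of type $h^u$—via difference families or, where necessary, computer search. (ii) \emph{Weighting:} apply Wilson's Fundamental Construction with a transversal design or $k$-GDD as master design, give every point a common weight $w$, and replace each block by an ingredient frame/RGDD of matching type; because the ingredients are frames, their partial parallel classes (each missing exactly one group) align across the blocks of the master to produce a frame of the inflated type. This is the engine that scales parameters up. (iii) \emph{Filling:} convert a frame of type $h^u$ into an RGDD by adjoining a set $W$ of $w$ extra points and placing on each ``group $\cup\, W$'' a resolvable design whose parallel classes complete the frame's partial parallel classes into genuine parallel classes; choosing $w$ to satisfy the divisibility and parity constraints then yields the desired $3$-RGDD$(h^u)$.

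The hard part will not be any single construction but \emph{coverage}: verifying that the recursion, started from the finite base library, actually reaches every admissible triple $(\lambda,h,u)$, and—more delicately—confirming that the listed exceptions are the \emph{only} failures. The awkward regime is small $u$ (especially $u\in\{3,4,6\}$) together with small $h$, where frames are scarce and one must lean on direct constructions and a careful, somewhat lengthy case analysis; pinning down precisely which of these small cases do and do not exist is the technical heart of the theorem.
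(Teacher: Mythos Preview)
The paper does not prove this theorem at all: it is quoted verbatim from Rees~\cite{R} and used as a black box throughout Sections~\ref{Constructions} and~\ref{general}. There is therefore no ``paper's own proof'' to compare your proposal against; the authors simply invoke the result wherever they need a $3$-RGDD$(3^y)$ or $3$-RGDD$(6^{y/2})$.

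That said, your sketch is a reasonable high-level summary of how the existence theory for resolvable $3$-GDDs was actually built up in the literature (frames, Wilson-type inflation, filling in groups), and your assessment that the delicate work lies in covering all admissible triples and isolating the genuine exceptions is accurate. One small slip: there is no ``classical obstruction at $v=9$ for Kirkman triple systems''---a $KTS(9)$ exists (it is the affine plane $AG(2,3)$); the genuine small obstructions relevant here are things like the non-existence of a resolvable $C_3$-factorization of $K_{(2:3)}$ or $K_{(6:3)}$, which underlie the exceptional triples $(1,2,3)$ and $(1,6,3)$. If you intend this as a standalone proof rather than a citation, you would need to supply the actual base designs and verify coverage case by case, which is a substantial undertaking well beyond what the present paper requires.
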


In particular, we have that a $3$-RGDD$(3^u)$ exists for all odd $u\geq 3$ and a $3$-RGDD$(6^u)$ exists for all $u\geq 4$.

\begin{lemma}
\label{main}
Let $m \geq 3$, $n \geq 3$ and $x$ be positive integers such that both $m$ and $n$ divide $3x$. Suppose the following conditions are satisfied:
\begin{packed_item}
\item There exists a $3$-RGDD$(h^{u})$,
\item there exists a decomposition of $K_{(x:3)}$ into $r_{p}$ $C_{m}$-factors and $s_{p}$ $C_{n}$-factors, for\\
$p \in \{1,2, \ldots, \frac{h(u-1)}{2}\}$,
\item there exists an $(m,n)\-\HWP(hx;r_{\beta}, s_{\beta})$.
\end{packed_item}
Let
\[r_{\alpha}=\sum_{p=1}^{\frac{h(u-1)}{2}}r_{p} \mbox{ and } s_{\alpha}=\sum_{p=1}^{\frac{h(u-1)}{2}}s_{p}.\]
Then there exists a $(m,n)\-\HWP(hux;r_{\alpha}+r_{\beta},s_{\alpha}+s_{\beta})$.
\end{lemma}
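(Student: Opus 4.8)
The plan is to build the decomposition of $K_{hux}$ in three layers corresponding to the three hypotheses, using the $3$-RGDD as a skeleton and "blowing up" each of its points to a set of $x$ vertices. Concretely, take a $3$-RGDD$(h^u)$ on point set $\mathcal V$ with groups $\mathcal G_1,\dots,\mathcal G_u$ and parallel classes $\mathcal P_1,\dots,\mathcal P_{h(u-1)/2}$ (the number of parallel classes of a $3$-RGDD$(h^u)$ is exactly $\frac{hu-h}{2}=\frac{h(u-1)}{2}$, since each class is a partition of the $hu$ points into triples and each point lies in $\frac{hu-h}{2}$ triples). Replace every point $w\in\mathcal V$ by a set $W$ of $x$ new vertices; the vertex set of $K_{hux}$ is the disjoint union of these sets. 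The edges of $K_{hux}$ split naturally into (i) edges inside a blown-up group $\bigcup_{w\in\mathcal G_\ell}W$, for each $\ell$, and (ii) edges between vertices whose underlying RGDD points lie in different groups. Every edge of type (ii) lies between two points that form a transverse pair, hence appear together in exactly one block of exactly one parallel class; so the type-(ii) edges are partitioned, parallel class by parallel class, and within a class, block by block, into copies of $K_{(x:3)}$ (one copy on the three blown-up point-sets of each triple).

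First I would handle the type-(ii) edges. Fix a parallel class $\mathcal P_p$. Its blocks are vertex-disjoint triples covering all of $\mathcal V$, so the union over blocks $B\in\mathcal P_p$ of the copies of $K_{(x:3)}$ on $B$ is a graph on all $hux$ vertices. By hypothesis, $K_{(x:3)}$ decomposes into $r_p$ $C_m$-factors and $s_p$ $C_n$-factors; apply this decomposition simultaneously and compatibly to every copy sitting on a block of $\mathcal P_p$. Taking, for a fixed index, the union of the corresponding factor across all blocks of $\mathcal P_p$ yields a $C_m$-factor (resp.\ $C_n$-factor) of $K_{hux}$, because each block contributes a $2$-regular spanning subgraph of its $3x$ vertices and the blocks partition the vertex set. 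This produces $r_p$ $C_m$-factors and $s_p$ $C_n$-factors from $\mathcal P_p$; summing over $p=1,\dots,\frac{h(u-1)}{2}$ gives exactly $r_\alpha$ $C_m$-factors and $s_\alpha$ $C_n$-factors, and together they use up precisely all type-(ii) edges.

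Next I would handle the type-(i) edges. For each group $\mathcal G_\ell$, the blown-up group is a complete graph on $hx$ vertices (the $h$ sets $W$, each $K_x$, together with all edges between them, i.e.\ $K_{hx}$ on $hx$ points). Invoke the hypothesized $(m,n)\-\HWP(hx;r_\beta,s_\beta)$ to decompose each such $K_{hx}$ into $r_\beta$ $C_m$-factors and $s_\beta$ $C_n$-factors; do this for all $u$ groups using the same indexing, and for each index take the union of the factor over the $u$ groups to obtain a $C_m$- or $C_n$-factor of $K_{hux}$ (again a spanning $2$-regular subgraph, since the $u$ blown-up groups partition the vertex set). This contributes $r_\beta$ $C_m$-factors and $s_\beta$ $C_n$-factors and exhausts the type-(i) edges. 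Combining the two layers, $K_{hux}$ is decomposed into $(r_\alpha+r_\beta)$ $C_m$-factors and $(s_\alpha+s_\beta)$ $C_n$-factors, which is the desired $(m,n)\-\HWP(hux;r_\alpha+r_\beta,s_\alpha+s_\beta)$. One should also note the degenerate remark that when $v=hx$ is even the $\HWP$ is on $K_{hx}-F$; since each parallel class of the RGDD has $u$ odd only when $hu$ forces it, I would remark that $hux$ and $hx$ have the same parity issues and the leftover $1$-factors, if any, line up — this bookkeeping is routine.

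The main obstacle is not any single hard estimate but the verification that the "union across blocks/groups of a chosen factor" genuinely yields a single $C_m$- or $C_n$-factor of $K_{hux}$, i.e.\ that the decompositions of the many copies of $K_{(x:3)}$ and of $K_{hx}$ can be aligned so that, class by class, exactly one factor-index is consumed everywhere at once; this is immediate once one observes that the copies attached to distinct blocks of a fixed parallel class are vertex-disjoint and collectively spanning, so no compatibility across blocks is actually required — each block is decomposed independently and the $i$-th factors are simply unioned. The only real care needed is the counting of parallel classes ($\frac{h(u-1)}{2}$, matching hypothesis (ii)) and confirming that type-(i) and type-(ii) edges together account for every edge of $K_{hux}$ exactly once, which follows from the defining properties of the RGDD.
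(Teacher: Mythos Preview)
Your argument is correct and is essentially identical to the paper's proof: both blow up each point of the $3$-RGDD$(h^u)$ by a set of $x$ vertices, use the given decomposition of $K_{(x:3)}$ on the blocks of each parallel class $\mathcal P_p$ to produce $r_p$ $C_m$-factors and $s_p$ $C_n$-factors of $K_{(hx:u)}$, and then fill in each blown-up group with the assumed $(m,n)\text{-}\HWP(hx;r_\beta,s_\beta)$. Your write-up is in fact more explicit than the paper's about why the unions across blocks of a parallel class (respectively across groups) yield genuine $2$-factors of $K_{hux}$, and your parity remark about the leftover $1$-factor is a detail the paper leaves implicit; note that the RGDD existence condition $h(u-1)$ even forces $u$ odd whenever $h$ is odd, so $hx$ and $hux$ always have the same parity and the $1$-factors (if present) combine as you suggest.
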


\begin{proof}
Let $\{\p_1,\p_2, \ldots, \p_{\frac{h(u-1)}{2}}\}$ denote the parallel classes of the $3$-RGDD$(h^u)$,
and let $W=\{1,2, \ldots, x\}$. Consider each parallel class $\p_{p}$ with $p \in \{1,2, \ldots, \frac{h(u-1)}{2}\}$.
For each block $\{a_1, a_2, a_3\} \in \p_p$, construct a decomposition of $K_{(x:3)}$ into $r_p$
$C_m$-factors and $s_p$ $C_{n}$-factors with parts $\{a_i\} \times W$, for $i=1,2,3$.
Thus we have a decomposition of $K_{(hx:u)}$ into $r_{\alpha}$ $C_m$-factors and
$s_{\alpha}$ $C_{n}$-factors where

\[r_{\alpha}=\sum_{p=1}^{\frac{h(u-1)}{2}}r_{p} \mbox{ and } s_{\alpha}=\sum_{p=1}^{\frac{h(u-1)}{2}}s_{p}.\]

Now each part of $K_{(hx:u)}$ can be decomposed into $r_{\beta}$ $C_m$-factors
and $s_{\beta}$ $C_{n}$-factors. Thus there exists an $(m,n)\-\HWP(hux;r,s)$
where $r=r_{\alpha}+r_{\beta}$ and $s=s_{\alpha}+s_{\beta}$.
\end{proof}

\begin{lemma}
\label{main2}
Let $m\geq 3$, $n \geq 3$ and $x$ be positive integers such that both $m$ and $n$ divide $3x$. Suppose the following conditions
are satisfied:
\begin{packed_item}
\item There exists a $3\-$RGDD$(h^{u})$,
\item there exists an $(m,n)\-\HWP(3x;r_{\beta}, s_{\beta})$,
\item there exists a decomposition of $K_{(x:h)}$ into $r_{\gamma}$ $C_{m}$-factors and $s_{\gamma}$ $C_n$-factors,
\item there exists a decomposition of $K_{(x:3)}$ into $r_{p}$ $C_{m}$-factors and $s_{p}$ $C_{n}$-factors,
for \\$p \in \{1,2, \ldots, \frac{h(u-1)}{2}\}$.

\end{packed_item}
Let
\[r_{\alpha}=\sum_{p=1}^{\frac{h(u-1)}{2}-1}r_{p} \mbox{ and } s_{\alpha}=\sum_{p=1}^{\frac{h(u-1)}{2}-1}s_{p}.\]
Then there exists a $(m,n)\-\HWP(hux;r_{\alpha}+r_{\beta}+r_{\gamma},s_{\alpha}+s_{\beta}+s_{\gamma})$.
\end{lemma}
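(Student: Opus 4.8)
The plan is to follow the strategy of Lemma~\ref{main}, but to reserve one parallel class of the $3$-RGDD and amalgamate its blown-up blocks with the ``within-copy'' edges, so that each reserved block becomes a copy of $K_{3x}$ that can be handled directly by the hypothesized $(m,n)\-\HWP(3x;r_{\beta},s_{\beta})$.

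First I would fix the $3$-RGDD$(h^{u})$, say $(\mathcal{V},\mathcal{G},\mathcal{B})$ with parallel classes $\p_{1},\dots,\p_{h(u-1)/2}$, set $W=\{1,\dots,x\}$, and blow up each point $a\in\mathcal{V}$ to $\{a\}\times W$, obtaining $hux$ vertices. Given two distinct blown-up vertices $(a,w),(a',w')$, the underlying points $a,a'$ are either transverse (hence lie together in a unique block of $\mathcal{B}$), or distinct but in a common group, or equal; accordingly the edges of the complete graph on the $hux$ vertices partition into three edge-disjoint classes whose union is everything: (A) for each block $B=\{a_{1},a_{2},a_{3}\}\in\mathcal{B}$, the $K_{(x:3)}$ with parts $\{a_{i}\}\times W$; (B) for each group $G\in\mathcal{G}$, the $K_{(x:h)}$ with parts $\{a\}\times W$ for $a\in G$; and (C) for each point $a\in\mathcal{V}$, the $K_{x}$ on $\{a\}\times W$. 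Since $m\mid 3x$ and $n\mid 3x$ are given, every small decomposition invoked below makes sense.

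Then I would build the $(m,n)$-factorization in three batches. For $1\le p\le\frac{h(u-1)}{2}-1$ and each block $B\in\p_{p}$, decompose the type-(A) $K_{(x:3)}$ on $B$ into $r_{p}$ $C_{m}$-factors and $s_{p}$ $C_{n}$-factors by hypothesis; since the blocks of $\p_{p}$ partition $\mathcal{V}$, picking the $i$th factor from each block and taking their union gives a spanning $2$-regular subgraph of $K_{hux}$ all of whose cycles have the right length, i.e.\ these amalgamate into $r_{p}$ $C_{m}$-factors and $s_{p}$ $C_{n}$-factors of $K_{hux}$, contributing $r_{\alpha}$ and $s_{\alpha}$ overall. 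For each group $G\in\mathcal{G}$, decompose the type-(B) $K_{(x:h)}$ into $r_{\gamma}$ $C_{m}$-factors and $s_{\gamma}$ $C_{n}$-factors; since the groups are disjoint these amalgamate into $r_{\gamma}$ $C_{m}$-factors and $s_{\gamma}$ $C_{n}$-factors. Finally, for a block $B=\{a_{1},a_{2},a_{3}\}$ of the reserved class $\p_{h(u-1)/2}$, the type-(A) $K_{(x:3)}$ on $B$ together with the three type-(C) copies of $K_{x}$ on the sets $\{a_{i}\}\times W$ is precisely $K_{3x}$; decompose it using the $(m,n)\-\HWP(3x;r_{\beta},s_{\beta})$ and amalgamate over the disjoint blocks of $\p_{h(u-1)/2}$ to obtain $r_{\beta}$ $C_{m}$-factors and $s_{\beta}$ $C_{n}$-factors. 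Every edge has been used exactly once, giving an $(m,n)\-\HWP(hux;r_{\alpha}+r_{\beta}+r_{\gamma},s_{\alpha}+s_{\beta}+s_{\gamma})$.

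The one place that needs genuine care is the even case: if $3x$ is even (equivalently $x$ is even, equivalently $hux$ is even) the $(m,n)\-\HWP(3x;r_{\beta},s_{\beta})$ decomposes $K_{3x}-F_{B}$ rather than $K_{3x}$, so each reserved block leaves behind a $1$-factor $F_{B}$; because $\p_{h(u-1)/2}$ partitions $\mathcal{V}$, the $F_{B}$ are vertex-disjoint and their union is a single $1$-factor $F$ of $K_{hux}$, so the construction in fact yields the required decomposition of $K_{hux}-F$. Verifying this parity bookkeeping, and noting that hypothesis~(4) is only needed for the first $\frac{h(u-1)}{2}-1$ indices $p$ (as the definition of $r_{\alpha},s_{\alpha}$ dictates), is the only subtlety; everything else is the same amalgamation argument as in Lemma~\ref{main}.
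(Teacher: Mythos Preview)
Your proposal is correct and follows essentially the same approach as the paper's proof: blow up the points of the $3$-RGDD$(h^{u})$ by $x$, use the $K_{(x:3)}$ decompositions on all but the last parallel class, use the $K_{(x:h)}$ decomposition on each group, and fill the last parallel class (together with the within-copy $K_{x}$'s) by copies of the $(m,n)\-\HWP(3x;r_{\beta},s_{\beta})$. In fact your write-up is more careful than the paper's, which does not explicitly mention that when $3x$ is even the leftover $1$-factors $F_{B}$ amalgamate into a single $1$-factor of $K_{hux}$.
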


\begin{proof}
Let $\{\p_1,\p_2, \ldots, \p_{\frac{h(u-1)}{2}}\}$ denote the parallel classes of the $3$-RGDD$(h^u)$,
and let $W=\{1,2, \ldots, x\}$. Consider each parallel class $\p_p$ with $p \in \{1,2, \ldots, \frac{h(u-1)}{2}-1\}$.
For each block $\{a_1, a_2, a_3\} \in \p_p$, construct a decomposition of $K_{(x:3)}$ into $r_p$
$C_m$-factors and $s_p$ $C_{n}$-factors with parts $\{a_i\} \times W$, $i=1,2,3$.
For each block $\{a_1, a_2, a_3\}$ in parallel class $\p_{\beta}$ where $\beta=\frac{h(u-1)}{2}$, construct
an $(m,n)\-\HWP(3x;r_{\beta},s_{\beta})$ on $\{a_{1} \times W, a_{2} \times W, a_{3} \times W\}$.
Take a decomposition of $K_{(x:h)}$ into $r_{\gamma}$ $C_{m}$-factors and $s_{\gamma}$ $C_{n}$-factors
simultaneously on each group of the $3$-RGDD$(h^{u})$. 
This makes an $(m,n)\-\HWP(hux;r,s)$
where $r=r_{\alpha}+r_{\beta}+r_{\gamma}$ and $s=s_{\alpha}+s_{\beta}+s_{\gamma}$.
\end{proof}

\section{Main Results}
\label{general}

In this section, we use the constructions given in Section~{\normalfont\ref{Constructions}} to obtain results on the existence of a $(3,3x)\-\HWP(3xy;r,s)$.
We consider four different cases depending on the parity of $x$ and $y$.

\begin{lemma}
\label{double}
Suppose $x$ is even.  If there exists a decomposition of $K_{3x}-F$ into $r_{\delta}$ $C_{3}$-factors and $s_{\delta}$ Hamilton cycles, then there exists a decomposition of $K_{6x}-F$ into $r_{\delta}$ $C_{3}$-factors and $s_{\delta}+\frac{3x}{2}$ $C_{3x}$-factors.
\end{lemma}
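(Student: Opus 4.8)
The plan is to take the given decomposition of $K_{3x}-F$ and "blow up" its triangle factors while keeping its Hamilton cycles intact, using a standard doubling trick that replaces each vertex by two vertices and each $C_3$ by a copy of $K_{(2:3)}$. Concretely, I would take two disjoint copies $K_{3x}^{0}-F^{0}$ and $K_{3x}^{1}-F^{1}$ of the hypothesized decomposition on vertex sets $V\times\{0\}$ and $V\times\{1\}$, and then account for the remaining edges of $K_{6x}-F$: for each pair of original vertices $\{a,b\}$ the four edges between $\{a,b\}\times\{0,1\}$ must be distributed. Since $F$ is a 1-factor of $K_{3x}$ and $x$ is even (so $3x$ is even, and $6x\equiv 0\pmod{4}$ only when $x$ is even — here we just need a 1-factor of $K_{6x}$), one natural choice of $F$ on $K_{6x}$ is $F^0\cup F^1$ together with the doubled non-$F$ edges handled below; I would fix $F$ on $K_{6x}$ to be exactly $\{(a,0)(a,1): a\in V\}$ so that $K_{6x}-F$ decomposes as the two copies $K_{3x}^i - F^i$ (for the right choice of $F^i$), plus, for every edge $ab$ of $K_{3x}$, the $4$-cycle on $(a,0),(a,1),(b,1),(b,0)$ minus nothing — i.e. the edges $(a,0)(b,0)$, $(a,1)(b,1)$ are in the copies and $(a,0)(b,1)$, $(a,1)(b,0)$ are the "cross" edges. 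Wait — that double counts; the cleaner bookkeeping is: $K_{6x}-F$ (with $F$ the vertical 1-factor) $= \bigl(K_{3x}-F'\bigr)^{0}\oplus\bigl(K_{3x}-F'\bigr)^{1}\oplus X$, where $F'$ is some 1-factor of $K_{3x}$ and $X$ consists of one copy of $K_{(2:3)}$ sitting on $\{a,b,c\}\times\{0,1\}$ for each... no. The honest approach: view $K_{6x}$ as the lexicographic-type product $K_{3x}[\overline{K_2}]$ plus two internal copies of $K_2$'s; I would instead use the triangle factors directly as the scaffold.

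Here is the cleaner route I would actually take. Let the hypothesized decomposition of $K_{3x}-F$ consist of triangle factors $R_1,\dots,R_{r_\delta}$ and Hamilton cycles $S_1,\dots,S_{s_\delta}$, where $r_\delta+s_\delta = \frac{3x-2}{2}$. Form $K_{6x}$ on $V\times\{0,1\}$. For each triangle factor $R_\ell$ and each triangle $T=\{a,b,c\}\in R_\ell$, the six vertices $\{a,b,c\}\times\{0,1\}$ induce a $K_{(2:3)}$ in $K_{6x}$; across all triangles of $R_\ell$ these $K_{(2:3)}$'s are vertex-disjoint and together cover all edges of $K_{6x}$ lying "between the blocks of $R_\ell$." By Theorem~\ref{6and12} (the $(3,6)$-$\HWP(6;r,s)$ exists for all $r+s=2$ except $(2,0)$), each $K_{(2:3)} = K_6 - F_6$... no, $K_{(2:3)}$ is $K_6$ minus a perfect matching, and a $(3,6)$-$\HWP$ on $6$ points is exactly a decomposition of $K_6-F$; so $K_{(2:3)}$ decomposes into either $2$ triangle factors, or $1$ triangle factor and $1$ $C_6$-factor, or $2$ $C_6$-factors. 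Choosing "$1$ triangle factor $+$ $1$ $C_6$-factor" in every copy across $R_\ell$ yields one triangle factor of $K_{6x}$ and one $C_6$-factor of $K_{6x}$ from each original $R_\ell$; but I need $C_{3x}$-factors, not $C_6$-factors, so this must be done more carefully — I want to merge the $C_6$'s from several triangle factors into long $C_{3x}$-cycles, or I should instead reserve the cross-edges differently.

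The main obstacle — and where I would focus the real work — is producing $C_{3x}$-factors (not $C_6$-factors) of weight $\frac{3x}{2}$ while only spending the triangle-factor "budget" and leaving all $s_\delta$ Hamilton cycles of $K_{3x}-F$ to lift to Hamilton... no, the conclusion asks for $C_{3x}$-factors from the Hamilton cycles too. So the right accounting is: the $s_\delta$ Hamilton cycles $S_j$ of $K_{3x}-F$ each lift, via the doubling, to $C_{3x}$-structure on $V\times\{0,1\}$ that I can arrange to be a single $C_{6x}$... again not $C_{3x}$. Let me reset the count: we want $r_\delta$ triangle factors and $s_\delta+\frac{3x}{2}$ $C_{3x}$-factors on $6x$ points, total $r_\delta+s_\delta+\frac{3x}{2} = \frac{3x-2}{2}+\frac{3x}{2} = 3x-1 = \frac{6x-2}{2}$, consistent. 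So I need: each triangle factor $R_\ell$ to lift to exactly one triangle factor of $K_{6x}-F$ (spending the $K_{(2:3)}$-copies as $C_3$-factor $+$ leftover), each Hamilton cycle $S_j$ to lift to one $C_{3x}$-factor of $K_{6x}-F$, and the remaining edges (the $C_6$-factor-ish leftovers from every $R_\ell$, plus the vertical matching, plus the leftover from each $S_j$) to assemble into $\frac{3x}{2}$ further $C_{3x}$-factors. The key construction step is thus: (i) in each $K_{(2:3)}$ from $R_\ell$ use the $(3,6)$-$\HWP(6;2,0)$ split? No—$(2,0)$ is forbidden. Use $(1,1)$: one $C_3$-factor, one $C_6$-factor per $K_{(2:3)}$. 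Then $R_\ell \rightsquigarrow$ (one $C_3$-factor of $K_{6x}$) $\oplus$ (one $2$-factor of $K_{6x}$ that is a disjoint union of $C_6$'s, one per original triangle). I then need to recombine these $r_\delta$-many "union of $C_6$'s" $2$-factors, the vertical $1$-factor, and the doubled Hamilton cycles, into $\frac{3x}{2}+s_\delta$ copies of $C_{3x}$-factors and a leftover $1$-factor $F$. For the $S_j$: doubling $S_j$ (a $3x$-cycle) gives a prism-like graph $C_{3x}\times K_2$ on $6x$ vertices, which is $3$-regular; removing a perfect matching... it's well known that $C_{3x}\times K_2$ decomposes into a Hamilton cycle ($C_{6x}$) plus a perfect matching when $3x$ is odd, or into two $C_{3x}$-factors... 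I would need: $C_{2k}\square K_2$ decomposes into two $C_{2k}$-factors? Not in general. This recombination is the crux; I expect to handle it by being explicit with the edge-difference structure coming from Lemma~\ref{Alg1}-type arguments and from Theorem~\ref{6and12}, and possibly by choosing the $C_6$/$C_3$ split in the $K_{(2:3)}$'s non-uniformly so that the $C_6$-components, glued through the two sheets, close up into $C_{3x}$'s — I would likely invoke the existence of $(3,6x)$-type factorizations of $K_{(2:3x)}$ or $K_{(x:6)}$ from Lemma~\ref{main}/Lemma~\ref{main2} rather than doing it by hand. I would flag upfront that this last amalgamation, ensuring exactly $\frac{3x}{2}$ new $C_{3x}$-factors emerge and a clean $1$-factor is left over, is the main obstacle, and that everything else (the doubling, the vertex-disjointness of the $K_{(2:3)}$'s, the edge count) is routine.
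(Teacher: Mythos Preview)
Your proposal never reaches a proof: you repeatedly restart the construction and end by flagging the ``amalgamation'' of $C_6$-pieces, doubled Hamilton cycles, and the vertical matching into $\frac{3x}{2}+s_\delta$ $C_{3x}$-factors as an unresolved obstacle. That is the entire content of the lemma, so as written this is a gap, not a proof.

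The reason you get stuck is that you are doubling in the wrong direction. You replace each vertex of $K_{3x}$ by a pair and then try to lift triangles through $K_{(2:3)}$'s; this forces you to manufacture $C_{3x}$-factors out of $C_6$-pieces and prism graphs, which is genuinely awkward. The paper's argument goes the other way: partition the $6x$ vertices into two sets $G_1,G_2$ of size $3x$, and apply the hypothesised decomposition of $K_{3x}-F$ \emph{simultaneously} to $G_1$ and $G_2$. A Hamilton cycle of $K_{3x}$ is already a $C_{3x}$, so a Hamilton cycle on $G_1$ together with the matching Hamilton cycle on $G_2$ is a $C_{3x}$-factor of $K_{6x}$; likewise the paired $C_3$-factors give $C_3$-factors of $K_{6x}$, and the two leftover $1$-factors together form the $1$-factor $F$ of $K_{6x}$. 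What remains is exactly the bipartite graph $K_{(3x:2)}$ between $G_1$ and $G_2$, and by Theorem~\ref{equip} this has a resolvable $C_{3x}$-factorization into $\frac{3x}{2}$ $C_{3x}$-factors (here is where ``$x$ even'' is used: $3x$ must be even because $u=2$). No recombination of short cycles is needed at all.
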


\begin{proof}
Let $G_{1}$ and $G_{2}$ be a partition of the $6x$ points into two subsets of size $3x$. Decompose $G_{1}$ and $G_{2}$ into $r_{\delta}$ $C_{3}$-factors, $s_{\delta}$ Hamilton cycles, and a $1$-factor, $F$.  By Theorem~{\normalfont\ref{equip}}, there exists a decomposition of $K_{3x:2}$ into $\frac{3x}{2}$ $C_{3x}$-factors. The union of these edges is $K_{6x}$.
\end{proof}

\begin{theorem}
\label{x odd, y odd}
For each pair of odd integers $x \geq 3$ and $y \geq 3$, there exists a $(3,3x)\-\HWP(3xy; r,s)$ 
if and only if $r+s= \frac{v-1}{2}$ except when $s=1$ and $x=3$, and possibly when $s=1$
and 
\newline $x \in \{31, 37, 41, 43, 47, 51, 53, 59, 61, 67, 69, 71, 79, 83\}$.
\end{theorem}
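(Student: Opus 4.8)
Since $v=3xy$ is odd and both $3\mid v$ and $3x\mid v$ hold automatically, the necessary conditions of Theorem~\ref{Ncond} reduce to $r+s=\frac{v-1}{2}$. Thus the content to be proved is: sufficiency of this condition for every $s\neq 1$, realizability of $s=1$ exactly when $x\notin\{3,31,37,41,43,47,51,53,59,61,67,69,71,79,83\}$, and the genuine nonexistence at $(s,x)=(1,3)$.

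The plan is to apply Lemma~\ref{main} with $h=3$ and $u=y$. As $y\geq 3$ is odd, a $3$-RGDD$(3^{y})$ exists and has $\frac{3(y-1)}{2}$ parallel classes. To each parallel class we attach, on every blown-up block, a decomposition of $K_{(x:3)}$ into $s_p$ Hamilton cycles of $K_{(x:3)}$ (each a $C_{3x}$-factor) and $x-s_p$ triangle factors; by Theorem~\ref{phitheorem} this exists for every $s_p\in\{0,2,3,\ldots,x\}$ because $x$ is odd. For the $y$ groups, each of size $3x$, we use a $(3,3x)\-\HWP(3x;r_{\beta},s_{\beta})$: since $3x\equiv 3\pmod 6$ and a $C_{3x}$-factor of $K_{3x}$ is a Hamilton cycle, Theorem~\ref{Hammy} governs the range $1\leq s_{\beta}\leq\frac{3x-1}{2}$, while $s_{\beta}=0$ (a $KTS(3x)$) and $s_{\beta}=\frac{3x-1}{2}$ (a Hamilton decomposition of $K_{3x}$) come from Theorem~\ref{OP}. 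Lemma~\ref{main} then yields a $(3,3x)\-\HWP(3xy;r,s)$ with $s=s_{\alpha}+s_{\beta}$, where $s_{\alpha}=\sum_{p}s_{p}$, and $r+s=\frac{v-1}{2}$ holds automatically.

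The remaining task is purely arithmetic: show that as the $s_p$ vary over $\{0,2,3,\ldots,x\}$ and $s_{\beta}$ over its admissible set, $s=s_{\alpha}+s_{\beta}$ attains every value in $\{0\}\cup\{2,3,\ldots,\frac{v-1}{2}\}$, and attains $1$ precisely when a $(3,3x)\-\HWP(3x;\cdot,1)$ exists. Since there are $\frac{3(y-1)}{2}\geq 3$ parts and $x\geq 3$, an easy additive check gives that $s_{\alpha}$ realizes exactly $\{0\}\cup\{2,3,\ldots,\frac{3x(y-1)}{2}\}$; in particular $s_{\alpha}\neq 1$. Hence $s=0$ via $(s_{\alpha},s_{\beta})=(0,0)$; every $2\leq s\leq\frac{3x(y-1)}{2}$ via $(s,0)$; and, using $\frac{v-1}{2}-\frac{3x(y-1)}{2}=\frac{3x-1}{2}$, every $s$ with $\frac{3x(y-1)}{2}<s\leq\frac{v-1}{2}$ via $(\frac{3x(y-1)}{2}-c,\,s-\frac{3x(y-1)}{2}+c)$ for a small $c\geq 0$ chosen so that $s_{\beta}=s-\frac{3x(y-1)}{2}+c$ misses the values that Theorem~\ref{Hammy} leaves in doubt when $3x\equiv 15\pmod{18}$; this is possible because those doubtful $s_{\beta}$ form a short initial segment of $\{2,\ldots,\frac{3x-1}{2}\}$ together with one extra point, so a shift of at most $\frac{x+1}{2}$ clears them while $s_{\alpha}$ stays well above $2$. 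Finally $s=1$ forces $s_{\beta}=1$ (as $s_{\alpha}\neq 1$), which is available exactly for $3x\notin\{9,93,111,123,129,141,153,159,177,183,201,207,213,237,249\}$, i.e. $x$ outside the exceptional list; when $x=3$ one argues separately that no $(3,9)\-\HWP(9y;r,1)$ exists, and for the fourteen listed $x$ the case stays open precisely because $(3,3x)\-\HWP(3x;\cdot,1)$ is open there.

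The main obstacle is the last paragraph: verifying that $s_{\alpha}$ sweeps out all of $\{2,\ldots,\frac{3x(y-1)}{2}\}$, and — the delicate point — that the potentially missing values of $s_{\beta}$ in the subcase $3x\equiv 15\pmod{18}$ can always be circumvented by trading a few units between $s_{\alpha}$ and $s_{\beta}$ without leaving either admissible set. Everything else is an assembly of Lemma~\ref{main}, Theorem~\ref{phitheorem}, and Theorem~\ref{Hammy}, with Theorem~\ref{OP} supplying the extreme values $s_{\beta}\in\{0,\frac{3x-1}{2}\}$.
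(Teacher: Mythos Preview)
Your argument follows the same architecture as the paper's: Lemma~\ref{main} applied with a $3$-RGDD$(3^{y})$, Theorem~\ref{phitheorem} on each blown-up block to obtain $s_p\in\{0,2,3,\dots,x\}$, and a $(3,3x)\-\HWP(3x;r_\beta,s_\beta)$ on each group. Where you diverge is only in the bookkeeping for the upper range of $s$. The paper commits to just three values $s_\beta\in\{0,\,1,\,\frac{3x-1}{2}\}$: it takes $(s_\alpha,s_\beta)=(s,0)$ for $s\in\{0,2,3,\dots,\frac{3x(y-1)}{2}\}$, takes $s_\beta=1$ at $s=1$ and at $s=\frac{3x(y-1)}{2}+1$, and for $s=\frac{3x(y-1)}{2}+i$ with $2\le i\le\frac{3x-1}{2}$ takes $s_\beta=\frac{3x-1}{2}$ together with $s_\alpha=s-\frac{3x-1}{2}\in\{2,\dots,\frac{3x(y-1)}{2}\}$. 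Since $s_\beta=0$ and $s_\beta=\frac{3x-1}{2}$ come from Theorem~\ref{OP} and are never in doubt, the $3x\equiv 15\pmod{18}$ gaps in Theorem~\ref{Hammy} are simply irrelevant. Your shift-by-$c$ manoeuvre through the full $s_\beta$-range is correct, but unnecessary: the single always-available value $s_\beta=\frac{3x-1}{2}$ already covers the whole top interval once $s_\alpha$ is known to sweep $\{0\}\cup\{2,\dots,\frac{3x(y-1)}{2}\}$. (Indeed that choice works at $i=1$ too, so even the paper's second use of $s_\beta=1$ is dispensable.)

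One point you flag that the paper's proof does not address: you say that for $x=3$ ``one argues separately that no $(3,9)\-\HWP(9y;r,1)$ exists,'' but neither you nor the paper supplies such an argument. What the construction actually shows is only that \emph{this route} to $s=1$ forces $s_\beta=1$, and that ingredient genuinely fails at $x=3$ because $(3,9)\-\HWP(9;3,1)$ does not exist; it does not establish that $(3,9)\-\HWP(9y;\cdot,1)$ itself is impossible. The theorem's phrasing of $(s,x)=(1,3)$ as a definite exception rather than a possible one is not justified by the proof given.
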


\begin{proof}
By Theorem~{\normalfont\ref{3rgdd}} there exists a $3$-RGDD$(3^y)$ for all odd $y \geq 3$. There exists a decomposition of $K_{(x:3)}$ into $r_{p}$ $C_{3}$-factors and $s_{p}$ $C_{3x}$-factors for $(r_{p}, s_{p}) \in \{(x,0), (x-2,2), (x-3,3), \ldots, (0,x)\}$ by Theorem~{\normalfont\ref{equip}}. There exists a $(3,3x)\-\HWP(3x;r_{\beta}, s_{\beta})$ whenever $(r_{\beta}, s_{\beta}) \in \{(\frac{3x-1}{2}, 0), (\frac{3x-3}{2},1), (0, \frac{3x-1}{2})\}$ by Theorems~{\normalfont\ref{OP}} and ~{\normalfont\ref{Hammy}} (excluding the exception and possible exceptions listed in the statements of these theorems).  So apply Lemma~{\normalfont\ref{main}} with $m=3$ and $n=3x$.
We must now show that for each $s \in \{0,1, \ldots, \frac{3xy-1}{2}\}$, there exists a $(3,3x)\-\HWP(3xy;r,s)$. It is easy to see that if $s_{\alpha} \in \{0,2,3, \ldots, \frac{3xy-3x}{2}\}$, then we can write $s_{\alpha}=\sum_{i=1}^{(3y-3)/2} s_{p}$ where $s_{p} \in \{0,2,3, \ldots, x\}$. Thus if $s \in \{0,2,3, \ldots, \frac{3xy-3x}{2}\}$, then we may write $s=s_{\alpha}+s_{\beta}$ by choosing $s_{\alpha}=s$ and $s_{\beta}=0$. If $s=1$, then choose $s_{\alpha}=0$ and $s_{\beta}=1$. If $s=\frac{3xy-3x}{2}+1$, choose $s_{\alpha}=\frac{3xy-3x}{2}$ and $s_{\beta}=1$. Finally, let $i=2,3, \ldots, \frac{3x-1}{2}$, and consider $s=\frac{3xy-3x}{2}+i$. We may choose $s_{\alpha}=s-(\frac{3x-1}{2})$ and $s_{\beta}=\frac{3x-1}{2}$ because
\[2 \leq s-\frac{3x-1}{2} \leq \frac{3xy-3x}{2}.\]

\end{proof}

\begin{theorem}
\label{x odd, y even}
For each odd integer $x\geq 3$ and each even integer $y\geq 8$, there exists a 
\newline $(3,3x)\-\HWP(3xy;r,s)$ if and only if $r+s=\frac{3xy-1}{2}$ except possibly when $s=1$. 
\end{theorem}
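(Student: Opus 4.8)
\emph{Proposal.} The ``only if'' direction is immediate: since $x$ is odd and $y$ is even, $v=3xy$ is even, so Theorem~\ref{Ncond} forces $r+s=\frac{v-2}{2}=\frac{3xy-2}{2}$ (the conditions $3\mid v$ and $3x\mid v$ being automatic). For sufficiency, the plan is to invoke Lemma~\ref{main} with $m=3$, $n=3x$, and a suitable $3$-RGDD. Because $y$ is even, a $3$-RGDD$(3^{y})$ does not exist (its degree condition ``$h(u-1)$ even'' fails), so instead I would take $h=6$ and $u=y/2$; then $hux=6\cdot\frac{y}{2}\cdot x=3xy$, and by Theorem~\ref{3rgdd} a $3$-RGDD$(6^{y/2})$ exists exactly when $y/2\geq 4$, i.e. precisely under the hypothesis $y\geq 8$. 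This RGDD has $\frac{h(u-1)}{2}=\frac{3(y-2)}{2}\geq 9$ parallel classes.

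Next I would gather the two remaining inputs of Lemma~\ref{main}. For the ``part'' ingredient, observe that a Hamiltonian cycle of $K_{(x:3)}$ is a single $3x$-cycle, hence a $C_{3x}$-factor; so Theorem~\ref{phitheorem} provides, for each parallel class, a decomposition of $K_{(x:3)}$ into $r_p$ $C_3$-factors and $s_p$ $C_{3x}$-factors for every $s_p\in\{0,2,3,\ldots,x\}$ with $r_p=x-s_p$. For the ``base'' ingredient I would only need the two extreme decompositions $(3,3x)\-\HWP(6x;3x-1,0)$ and $(3,3x)\-\HWP(6x;0,3x-1)$, that is, a $C_3$-factorization and a $C_{3x}$-factorization of $K_{6x}-F$, both of which exist by Theorem~\ref{OP} since $x\geq 3$ keeps $6x$ away from the forbidden orders $6$ and $12$. (Note that, unlike the $x$ odd, $y$ odd case, this argument never calls on Theorem~\ref{Hammy}, which is why no further numerical exceptions appear.) Feeding these into Lemma~\ref{main} yields a $(3,3x)\-\HWP(3xy;r_\alpha+r_\beta,s_\alpha+s_\beta)$ for every $s_\alpha=\sum_{p=1}^{(3y-6)/2}s_p$ with each $s_p\in\{0,2,3,\ldots,x\}$ and every $s_\beta\in\{0,3x-1\}$.

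Finally I would carry out the packing bookkeeping. With at least nine summands each lying in $\{0,2,3,\ldots,x\}$ and $x\geq 3$, the attainable values of $s_\alpha$ are exactly $\{0\}\cup\{2,3,\ldots,\frac{3x(y-2)}{2}\}$, and one checks $\frac{3x(y-2)}{2}\geq 3x+1$ whenever $y\geq 8$. Hence $s_\beta=0$ already realizes all $s\in\{0,2,3,\ldots,\frac{3x(y-2)}{2}\}$ (in particular $s=3x$), while $s_\beta=3x-1$ shifts that set up by $3x-1$, yielding $\{3x-1\}\cup\{3x+1,3x+2,\ldots,\frac{3xy-2}{2}\}$; the union of the two is $\{0\}\cup\{2,3,\ldots,\frac{3xy-2}{2}\}$. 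Since $r_p+s_p=x$ for every $p$ and $r_\beta+s_\beta=3x-1$, one gets $r+s=x\cdot\frac{3(y-2)}{2}+(3x-1)=\frac{3xy-2}{2}$ automatically, so the correct $r=\frac{3xy-2}{2}-s$ always accompanies each $s$. This settles every admissible pair $(r,s)$ except $s=1$.

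The step I expect to be the genuine obstacle is exactly $s=1$: obtaining it through this construction would require $s_\alpha=0$ and $s_\beta=1$, i.e. a $(3,3x)\-\HWP(6x;3x-2,1)$ --- a mixed Hamilton--Waterloo decomposition of an even-order complete graph containing a single $C_{3x}$-factor, which is precisely the ``$v$ even, $s=1$'' regime the paper itself flags as hard and leaves open elsewhere. So I would simply record $s=1$ as a possible exception, matching the statement.
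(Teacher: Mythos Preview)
Your proposal is correct and follows essentially the same route as the paper: both use a $3$-RGDD$(6^{y/2})$ (valid for $y\geq 8$), decompose each $K_{(x:3)}$ into $r_p$ triangle factors and $s_p$ Hamiltonian $C_{3x}$-factors with $s_p\in\{0,2,3,\ldots,x\}$, take $s_\beta\in\{0,3x-1\}$ from Theorem~\ref{OP} on $K_{6x}$, and then do the same additive bookkeeping to hit every $s\in\{0,2,3,\ldots,\frac{3xy-2}{2}\}$. Your citation of Theorem~\ref{phitheorem} for the mixed $K_{(x:3)}$ decompositions is in fact more accurate than the paper's own reference to Theorem~\ref{equip} at that point, and you correctly note the typo in the statement (the necessary condition is $r+s=\frac{3xy-2}{2}$, not $\frac{3xy-1}{2}$).
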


\begin{proof}
%If $y=2$,
%If $y=4$,
%If $y=6$,
By Theorem~{\normalfont\ref{3rgdd}}, there exists a $3$-RGDD$(6^{y/2})$ for all even $y\geq 8$. By Theorem~{\normalfont\ref{equip}}, for each $p\in\{1,2,\ldots,\frac{6(y/2-1)}{2}\}$, $K_{(x:3)}$ can be decomposed into $r_p$ $C_3$-factors and $s_p$ $C_{3x}$-factors where $(r_p,s_p)\in \{(x,0),(x-2,2),(x-3,3),\ldots,(0,x)\}$, so that $r_\alpha=\sum_{p=1}^{3(y/2-1)} r_p$ and $s_\alpha =\sum_{p=1}^{3(y/2-1)} s_p$.
By Theorem~{\normalfont\ref{OP}}, $K_{6x}$ can be decomposed into $r_\beta$ $C_3$-factors, $s_\beta$ $C_{3x}$-factors, and a $1$-factor where $(r_\beta,s_\beta)\in\{((6x-2)/2,0),(0,(6x-2)/2)\}$.
We must show that for each $s\in\{0,2,3,\ldots,(3xy-2)/2\}$ there exists a $(3,3x)\-\HWP(3xy;r,s)$. It is easy to see that such a decomposition exists when $s\in \{0,2,3,\ldots,(3xy-6x)/2\}$ by choosing $s_\alpha=s$ and $s_\beta=0$. For each $i\in\{1,2,\ldots,(6x-2)/2\}$, when $s=(3xy-6x)/2+i$, choose $s_\alpha=s-(6x-2)/2$ and $s_\beta=(6x-2)/2$. Notice that
$$2 \leq s_\alpha=\frac{3xy-6x}{2}+i-\left(\frac{6x-2}{2}\right) \leq \frac{3xy-6x}{2}+\left(\frac{6x-2}{2}\right)-\left(\frac{6x-2}{2}\right)\leq \frac{3xy-6x}{2}.$$
Therefore by Lemma~{\normalfont\ref{main}}, the proposed $(3,3x)\-\HWP(3xy;r,s)$ exists for all specified pairs $(r,s)$.
\end{proof}

\begin{theorem}
\label{x even, y odd}
For each even integer $x \geq 8$  and each odd integer $y\geq 3$, there exists a 
\newline $(3,3x)\-\HWP(3xy;r,s)$ if and only if $r+s=\frac{3xy-2}{2}$ except possibly when:

\begin{packed_item}
\item $(s,x) \in \{(2,12), (4,12)\},$
\item $1 \leq s \leq \frac{x}{2}-1$ and $x \equiv 4 \pmod{6}$, 
\item $s=1$ and $x \equiv 2 \pmod{12}$.
\end{packed_item}
 
\end{theorem}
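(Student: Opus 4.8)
The plan is to mimic the structure of the proof of Theorem~\ref{x odd, y even}, but to use $K_{(4\bar x:3)}$-decompositions from Theorem~\ref{Alg2} in place of the $K_{(x:3)}$-decompositions from Theorem~\ref{equip}, since $x$ is even. Write $x=2\bar x$. First I would split into two cases according to the residue of $x$ modulo $4$, i.e., according to whether $\bar x$ is even or odd. When $\bar x$ is odd we have $x=2\bar x$ with $\bar x$ odd, so $x\equiv 2\pmod 4$; here the relevant component decomposition should come from Theorem~\ref{Alg2} applied to $K_{(4\cdot(x/4)\cdots)}$---more precisely, we need $K_{(x:3)}$ decomposed into triangle factors and $C_{3x}$-factors, and since $3x=6\cdot(x/2)$ we can invoke Theorem~\ref{equip} directly (it gives a resolvable $C_{3x}$-factorization of $K_{(x:3)}$ whenever $3\mid x$; when $x\equiv 2\pmod 6$ we instead need the mixed decompositions). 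The cleanest route is: use a $3$-RGDD$(h^u)$ with $(h,u)=(3,y)$ for odd $y$, feed in the $K_{(x:3)}$ mixed decompositions, and a $(3,3x)\-\HWP(3x;r_\beta,s_\beta)$ for the last piece via Lemma~\ref{main} (or Lemma~\ref{main2} with $K_{(x:h)}$ on the groups).

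Concretely, I would apply Lemma~\ref{main} with $m=3$, $n=3x$, and the $3$-RGDD$(3^y)$ guaranteed by Theorem~\ref{3rgdd} for all odd $y\ge 3$. For the middle ingredient I need $K_{(x:3)}$ decomposed into $r_p$ triangle factors and $s_p$ $C_{3x}$-factors for $p\in\{1,\dots,\frac{3(y-1)}{2}\}$. Since $x$ is even, $C_{3x}$ has even length, so by Theorem~\ref{equip} (with $u=3$, $h=x$, $m=3x$) $K_{(x:3)}$ has a resolvable $C_{3x}$-factorization, giving $(r_p,s_p)=(0,x)$; combined with the trivial triangle factorization $T_x(i)$ (Lemma before~\ref{Alg1} applied in the even case gives the all-triangle decomposition since $\bigoplus T_x(i)$ works for any $x$ when $\gcd(x,2)$\dots) --- actually the safe statement is that Theorem~\ref{equip} supplies $(r_p,s_p)$ realizing $s_p\in\{0,2,3,\dots,x\}$ exactly as in the odd-$y$ proof, once we know $x\not\equiv 2\pmod 6$ or handle that residue separately via Lemma~\ref{K444}/Theorem~\ref{Alg2}. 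For the base block I need a $(3,3x)\-\HWP(3x;r_\beta,s_\beta)$; here $3x\equiv 0\pmod 6$, so Theorem~\ref{Hammy} gives all $s_\beta$ with $2\le s_\beta\le \frac{3x-2}{2}$ (with the listed possible exceptions $v=18$, $v\equiv 12\pmod{18}$, $v\equiv 6\pmod{36}$, and $s=1$), and Theorem~\ref{OP} gives $s_\beta\in\{0,\frac{3x-2}{2}\}$. Translating those $v=3x$ exceptions back into congruences on $x$ is exactly what produces the exceptional list: $3x\equiv 12\pmod{18}\iff x\equiv 4\pmod 6$, and $3x\equiv 6\pmod{36}\iff x\equiv 2\pmod{12}$, and $s_\beta=1$ is the source of the residual $s=1$ exception. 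The small-$x$ cases $x\in\{8\}$ versus $v=3x=24,36$ need the ad hoc item $(s,x)\in\{(2,12),(4,12)\}$ coming from $v=36$, $s\in\{2,4\}$ in Theorem~\ref{Hammy}.

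Then comes the bookkeeping step, identical in spirit to the odd cases: show every target $s\in\{0,2,3,\dots,\frac{3xy-2}{2}\}$ can be written as $s=s_\alpha+s_\beta$ with $s_\alpha=\sum_{p} s_p$ a sum of $\frac{3(y-1)}{2}$ terms each in $\{0,2,3,\dots,x\}$ and $s_\beta$ an allowable base value. The identity $\{0,2,3,\dots,x\}+\cdots+\{0,2,3,\dots,x\}$ ($\frac{3(y-1)}{2}$ summands) covers $\{0,2,3,\dots,\frac{3x(y-1)}{2}\}$ (the only gap is $1$, absorbed by taking $s_\beta=1$ when available), and then sliding $s_\beta$ through $\{2,3,\dots,\frac{3x-2}{2}\}$ fills the top interval; the inequality $2\le s-\frac{3x-2}{2}\le \frac{3x(y-1)}{2}$ must be checked using $y\ge 3$ and $x\ge 8$. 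For $s=1$ we are forced to $s_\beta=1$, which fails precisely when the base $(3,3x)\-\HWP(3x;r_\beta,1)$ is not known --- that is, when $x\equiv 4\pmod 6$ (giving also $1\le s\le \frac x2-1$ after one more round of the same argument pushing a bad $s_\beta$ into a small $s_\alpha$) or $x\equiv 2\pmod{12}$ --- and these are exactly the exceptions in the statement.

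The main obstacle is the $s=1$ / small-$s$ range, where we cannot shift weight into $s_\alpha$ (the component decompositions only produce $s_p\in\{0,2,3,\dots\}$, never $s_p=1$), so the existence of a $C_{3x}$-factor count of $1$ in the whole design hinges entirely on the base $(3,3x)\-\HWP(3x;r_\beta,1)$, and Theorem~\ref{Hammy} leaves exactly the residues $x\equiv 4\pmod 6$, $x\equiv 2\pmod{12}$ (and $v=36$) open there. I expect the bulk of the write-up to be (i) verifying that for $x\not\equiv 2\pmod 6$ Theorem~\ref{equip} cleanly supplies all needed mixed $K_{(x:3)}$-decompositions, and for $x\equiv 2\pmod 6$ supplementing with Theorem~\ref{Alg2} (using $x=2\bar x$, $\bar x$ odd, so $K_{(x:3)}$ is covered when $4\mid$\,\dots) or with Lemma~\ref{main2} routing the leftover through $K_{(x:h)}$; and (ii) the arithmetic of assembling the target interval, which is routine but must be done carefully to land the stated exception list and no more.
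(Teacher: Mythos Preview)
Your proposal contains a genuine confusion that makes the argument more complicated than necessary and, as written, does not go through.

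You assume you can get mixed decompositions of $K_{(x:3)}$ into $s_p$ $C_{3x}$-factors and $r_p$ triangle factors for all $s_p\in\{0,2,3,\dots,x\}$ when $x$ is even. None of the cited tools deliver this. Theorem~\ref{equip} only gives \emph{uniform} factorizations, so it yields just $(r_p,s_p)\in\{(x,0),(0,x)\}$. Theorem~\ref{Alg2} concerns $K_{(4\bar x:3)}$ with $\bar x$ odd; under your substitution $x=2\bar x$ that is $K_{(2x:3)}$, not $K_{(x:3)}$, so it does not apply here. The constructions of Section~\ref{Odd $x$} need $x$ odd, and Lemma~\ref{K444} is specific to $K_{(4:3)}$. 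Your own hedging (``actually the safe statement is that Theorem~\ref{equip} supplies $s_p\in\{0,2,3,\dots,x\}$'') is exactly the point where the argument breaks: that statement is false.

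The paper's proof avoids this entirely by using only $(r_p,s_p)\in\{(x,0),(0,x)\}$, so that $s_\alpha$ ranges over the multiples $\{0,x,2x,\dots,\tfrac{3(y-1)}{2}x\}$, and then lets Theorem~\ref{Hammy} supply the full interval $s_\beta\in\{0,1,\dots,\tfrac{3x-2}{2}\}$ (minus its exceptions) on the base $K_{3x}$. Since $\tfrac{3x-2}{2}\ge x-1$, every $s$ in $\{0,1,\dots,\tfrac{3xy-2}{2}\}$ can be written as $s=tx+i$ with $t$ in range and $i\le\tfrac{3x-2}{2}$; the overlap between consecutive windows lets you choose $i\ge x/2$ whenever $s\ge x$, so the only unavoidable bad $s_\beta$-values occur for $s<x/2$. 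That is precisely how the listed exceptions arise, and your final two paragraphs already carry out this translation correctly. What you should drop is the attempt to obtain intermediate $s_p$ values and the case split on $x\bmod 4$; the argument is a single case using Lemma~\ref{main} with a $3$-RGDD$(3^y)$ and only the two extreme $K_{(x:3)}$-factorizations.
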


\begin{proof}
Suppose $x \geq 8$ is even. By Theorem~{\normalfont\ref{3rgdd}}, there exists a $3$-RGDD$(3^{y})$ for all odd integers $y \geq 3$. By Theorem~{\normalfont\ref{equip}}, for each $p \in \{1,2, \ldots,
\frac{3(y-1)}{2}\}$, $K_{(x:3)}$ can be decomposed into $r_{p}$ $C_{3}$-factors and $s_{p}$ $C_{3x}$-factors, where $(r_{p},s_{p}) \in
\{(x,0),(0,x)\}$. By Theorem~{\normalfont\ref{Hammy}}, there exists a decomposition of $K_{3x}$ into $r_{\beta}$ $C_{3}$-factors and $s_{\beta}$ $C_{3x}$-factors and a 1-factor for $(r_{\beta},s_{\beta}) \in \{(\frac{3x-2}{2},0), (\frac{3x-4}{2},1),\ldots, (0,\frac{3x-2}{2})\}$, except possibly when 
$(s_{\beta},x) \in \{(2,12),(4,12)\}$; $1 \leq s_{\beta} \leq \frac{x}{2}-1$ and $x \equiv 4 \pmod{6}$; or $s_{\beta}=1$ and $x \equiv 2 \pmod{12}$.
We apply Lemma~{\normalfont\ref{main}} to obtain a $(3,3x)\-\HWP(3xy;r,s)$ with
$r=r_{\alpha}+r_{\beta}$ and $s=s_{\alpha}+s_{\beta}$ for all
$s \in \{0,1, \ldots, \frac{3xy-2}{2}\}$ (with the exceptions listed in the statement of this theorem) as follows.
We may write  $s_\alpha =\sum_{p=1}^{\frac{3(y-1)}{2}} s_p$ where $s_{p}\in \{0,x\}$,
so that $s_{\alpha} \in \{0,x,2x, \ldots, x \cdot \frac{3y-3}{2}\}$.
Write $s=t \cdot x+i$, where $t \in \{0,1, \ldots, \frac{3y-3}{2}\}$
and $ i \in \{0, 1, \ldots, \frac{3x-2}{2}\}$. We may choose $s_{\alpha}=s-i$
and $s_{\beta}=i$.

\end{proof}

Note that the cases of $x=2,4$ are not considered in the previous theorem. They will be handled in Section~{\normalfont\ref{When $x$ is small}}.
We leave open the case of $x=6$ and $y$ odd.

\begin{theorem}
\label{x even, y even}
For each even integer $x \geq 8$ and each even integer $y\geq 8$, there exists a 
\newline $(3,3x)\-\HWP(3xy;r,s)$ if and only if $r+s=\frac{3xy-2}{2}$ except possibly when:

\begin{packed_item}
\item $(s,x) \in \{(2,12), (4,12)\}$,
\item $2 \leq s \leq \frac{x}{2}-1$ and $x \equiv 4$ or $10 \pmod{12}$,
\item $s=1$ and $x \equiv 2,4,$ or $10 \pmod{12}$.
\end{packed_item}
 
\end{theorem}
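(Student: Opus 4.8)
The plan is to mirror the proof of Theorem~\ref{x even, y odd} almost verbatim, but replace the $3$-RGDD$(3^y)$ with a $3$-RGDD$(6^{y/2})$, which exists for all even $y\geq 8$ by Theorem~\ref{3rgdd}. So first I would set $h=6$, $u=y/2$, $m=3$, $n=3x$, and check that both $3$ and $3x$ divide $3x$ trivially, so Lemma~\ref{main} applies once its hypotheses are verified. The RGDD has $\frac{h(u-1)}{2}=\frac{6(y/2-1)}{2}=3(y/2-1)$ parallel classes, so the sums defining $r_\alpha,s_\alpha$ run over $p\in\{1,\ldots,3(y/2-1)\}$.

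Next I would supply the ingredient decompositions. For each parallel class we need a decomposition of $K_{(x:3)}$ into $r_p$ $C_3$-factors and $s_p$ $C_{3x}$-factors; since $x$ is even, $h(u-1)=6(y/2-1)$ is even and Theorem~\ref{equip} gives such decompositions exactly for $(r_p,s_p)\in\{(x,0),(0,x)\}$ (the value $s_p=x$ needs $3x\mid 3x$ and $x\cdot 2$ even, both fine; intermediate values are unavailable because $h(u-1)$ being even doesn't force $u-1$ even when we plug into the parity condition $h(u-1)$ of the $C_n$-factorization — here the relevant parity is that of $x$, which is even, so only the two extremes survive as in Theorem~\ref{x even, y odd}). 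For the ``inside the group'' piece we now need an $(3,3x)\-\HWP(6x;r_\beta,s_\beta)$ for a suitable range of $s_\beta$: here $hx=6x$, and since $6x\equiv 0\pmod 6$ we invoke Theorem~\ref{Hammy} (the $v\equiv 0\pmod 6$ bullets) together with Theorem~\ref{OP}. Writing out the exceptions of Theorem~\ref{Hammy} with $v=6x$: the pair $(v,s)\in\{(36,2),(36,4)\}$ becomes $(x,s)=(6,\cdot)$ which is excluded here since $x\geq 8$; the condition $v\equiv 12\pmod{18}$ with $2\leq s\leq v/6-1$ becomes $6x\equiv 12\pmod{18}$, i.e.\ $x\equiv 2\pmod 3$, combined with $x$ even this is $x\equiv 2$ or $8\pmod{12}$... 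I would need to reconcile this with the stated exception ``$x\equiv 4$ or $10\pmod{12}$'', so the hard bookkeeping is exactly here: translating the congruence conditions on $6x$ back to congruence conditions on $x$, and the $s=1$ bullet ($v\equiv 6\pmod{36}$, i.e.\ $6x\equiv 6\pmod{36}$, i.e.\ $x\equiv 1\pmod 6$ — but $x$ even forces care; also $v=18$ i.e.\ $x=3$ is irrelevant) must be combined with the residual $C_{3x}$-factorization exceptions to yield ``$x\equiv 2,4,$ or $10\pmod{12}$''.

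Having pinned down the admissible $(r_\beta,s_\beta)$, the final step is the covering argument showing every feasible $s\in\{0,1,\ldots,\frac{3xy-2}{2}\}$ (outside the listed exceptions) decomposes as $s=s_\alpha+s_\beta$. As in Theorem~\ref{x even, y odd}, $s_\alpha$ ranges over multiples of $x$ up to $x\cdot 3(y/2-1)=\frac{3xy-6x}{2}\cdot\frac{1}{?}$—more precisely $s_\alpha\in\{0,x,2x,\ldots,x\cdot 3(y/2-1)\}$—while $s_\beta$ ranges over $\{0,1,\ldots,(6x-2)/2\}$ minus the bad values. I would write $s=tx+i$ with $t\in\{0,\ldots,3(y/2-1)\}$ and $i\in\{0,\ldots,(6x-2)/2\}$, set $s_\alpha=s-i$, $s_\beta=i$, and then argue that the only $s$ forced into an exceptional class are those where \emph{every} admissible splitting hits a forbidden $s_\beta$, which (since consecutive intervals of length $x$ overlap the available $s_\beta$ range generously) reduces to small $s$, giving precisely the $s=1$ and $2\leq s\leq x/2-1$ exceptions. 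Then cite Lemma~\ref{main} to conclude.

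The main obstacle I anticipate is purely the congruence arithmetic of step two: correctly intersecting (a) the parity/divisibility exceptions of Theorem~\ref{equip} for $C_{3x}$-factorizations of $K_{(x:3)}$ with even $x$, (b) the four exception families of Theorem~\ref{Hammy} with $v=6x$ translated to conditions on $x\bmod 12$, and (c) the overlap of the $s_\beta$-windows in the covering argument, all while keeping track of which exceptions genuinely survive versus which are subsumed by $x\geq 8$ even. The structural part of the proof is routine given Theorems~\ref{3rgdd}, \ref{OP}, \ref{equip}, \ref{Hammy} and Lemma~\ref{main}; the delicate part is verifying that the resulting exception list is exactly $(s,x)\in\{(2,12),(4,12)\}$, $2\leq s\leq x/2-1$ with $x\equiv 4,10\pmod{12}$, and $s=1$ with $x\equiv 2,4,10\pmod{12}$.
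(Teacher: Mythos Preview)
Your plan has a genuine gap at the ``inside the group'' step. With $h=6$, Lemma~\ref{main} requires a $(3,3x)\text{-}\HWP(6x;r_\beta,s_\beta)$, i.e.\ a decomposition of $K_{6x}-F$ into $C_3$-factors and $C_{3x}$-factors. You propose to obtain this from Theorem~\ref{Hammy} with $v=6x$, but Theorem~\ref{Hammy} concerns $(3,v)\text{-}\HWP(v;r,s)$: triangles together with \emph{Hamilton} cycles. For $v=6x$ those Hamilton cycles have length $6x$, not $3x$, so Theorem~\ref{Hammy} does not supply any $C_{3x}$-factors of $K_{6x}$. This is exactly why your congruence bookkeeping refuses to match the stated exceptions: translating $v\equiv 12\pmod{18}$ with $v=6x$ yields $x\equiv 2\pmod 3$, whereas the theorem's exceptions involve $x\equiv 4,10\pmod{12}$, which correspond to $3x\equiv 12\pmod{18}$, i.e.\ $v=3x$.

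The paper sidesteps the missing ingredient by using Lemma~\ref{main2} rather than Lemma~\ref{main}. One parallel class of the $3$-RGDD$(6^{y/2})$ is reserved; on each of its blocks a $(3,3x)\text{-}\HWP(3x;r_\beta,s_\beta)$ is placed (here $3x$-cycles \emph{are} Hamilton cycles, so Theorem~\ref{Hammy} applies directly and produces precisely the listed exceptions), and the edges inside each group of size $6x$ are handled by a $(C_3,C_{3x})$-factorization of $K_{(x:6)}$ from Theorem~\ref{equip} with $(r_\gamma,s_\gamma)\in\{(0,\tfrac{5x}{2}),(\tfrac{5x}{2},0)\}$. The covering argument then writes $s=s_\alpha+s_\beta+s_\gamma$ with $s_\alpha\in\{0,x,2x,\ldots,x(\tfrac{3y}{2}-4)\}$, $s_\gamma\in\{0,\tfrac{5x}{2}\}$, and $s_\beta\in\{0,1,\ldots,\tfrac{3x-2}{2}\}$ (minus the Hammy exceptions). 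Your overall strategy is fine, but you must switch to Lemma~\ref{main2} and work with $K_{3x}$ and $K_{(x:6)}$ instead of trying to decompose $K_{6x}$ in one shot.
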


\begin{proof}
%If $y=2$,
%If $y=4$,
%If $y=6$,
There exists a $3$-RGDD$(6^{y/2})$ for all even $y \geq 8$ by Theorem~{\normalfont\ref{3rgdd}}.  
There exists a decomposition of $K_{(x:3)}$ into $r_{p}$ $C_{3}$-factors
and $s_{p}$ $C_{3x}$-factors for $(r_{p},s_{p}) \in \{(0,x),(x,0)\}$ by Theorem~{\normalfont\ref{equip}}.
By the same result, we also get a decomposition of $K_{(x:6)}$ into $r_{\gamma}$ $C_{3}$-factors
and $s_{\gamma}$ $C_{3x}$-factors for $(r_{\gamma}, s_{\gamma}) \in \{(0,\frac{5x}{2}), (\frac{5x}{2},0)\}$. By Theorem~{\normalfont\ref{Hammy}}, there exists a decomposition of $K_{3x}$ into $r_{\beta}$ $C_{3}$-factors, $s_{\beta}$ $C_{3x}$-factors, and a 1-factor for $(r_{\beta}, s_{\beta}) \in \{\frac{3x-2}{2}, 0), (\frac{3x-4}{2}, 1),\ldots, (0, \frac{3x-2}{2})\}$, except possibly when
$(s_{\beta},x) \in \{(2,12), (4,12)\}$;
$1 \leq s_{\beta} \leq \frac{x}{2}-1$ and $x \equiv 4 \pmod{6}$; or
$s_{\beta}=1$ and $x \equiv 2 \pmod{12}$.
Write $s_{\alpha}=\sum_{p=1}^{\frac{3y}{2}-4} s_{p}$ so $s_{\alpha} \in \{0,x,2x,\ldots, x(\frac{3y}{2}-4)\}$.
By Lemma~{\normalfont\ref{main2}}, we obtain a $(3,12)\-\HWP(3xy;r,s)$ for all $s \in \{0,1,\ldots, \frac{3xy-2}{2}\}$
as follows.
If $s \in \{0,1, \ldots, \frac{3xy}{2}-\frac{5x}{2}-1\}$, it is easy to see that we can let $s_{\gamma}=0$
and write $s$ as $s=s_{\alpha}+s_{\beta}$. 
If $s=\frac{3xy}{2}-\frac{5x}{2}+i$, for $i=0,1,\ldots, \frac{3x}{2}-1$ choose $s_{\alpha}=(\frac{3y}{2}-5)x$,
$s_{\beta}=i$, and $s_{\gamma}=\frac{5x}{2}$.
If $s=\frac{3xy}{2}-x+i$ for $i=0,1,\ldots, x-1$, choose $s_{\alpha}=(\frac{3y}{2}-4)x$, $s_{\beta}=\frac{x}{2}+i$ and $s_{\gamma}=\frac{5x}{2}$. 
\end{proof}

We can fill in some of the gaps that we have left by using Theorem {\normalfont\ref{Alg2}}.
\begin{theorem}\label{barxtheo}
For each odd integer $\bar{x}\geq 3$ and each even integer $y\geq 6$, there exists a \newline $(3,6\bar{x})\-\HWP(6\bar{x}y;r,s)$ if and only if $r+s=\frac{6\bar{x}y-2}{2}$ except possibly when $s=1$.
\end{theorem}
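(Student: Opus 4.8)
The plan is to mirror the structure of Theorem~\ref{x even, y even}, but to replace the uniform decomposition of $K_{(x:3)}$ (which forces $(r_p,s_p)\in\{(x,0),(0,x)\}$ and creates the troublesome gaps $2\le s\le \frac{x}{2}-1$) with the much more flexible decomposition coming from Theorem~\ref{Alg2}, which gives a decomposition of $K_{(4\bar x:3)}$ into $s_p$ $C_{6\bar x}$-factors and $r_p$ triangle factors for \emph{every} $s_p\in\{0,2,3,\ldots,4\bar x\}$. First I would set $m=3$, $n=6\bar x$, so that $3x=6\bar x$ means $x=2\bar x$; note that $6\bar x y$ is the target order $v$. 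Since $y\ge 6$ is even, write $y=2\bar y$ with $\bar y\ge 3$, and invoke Theorem~\ref{3rgdd} to get a $3$-RGDD$(h^u)$. The natural choice is $h=6$, $u=\bar y=y/2\ge 3$: a $3$-RGDD$(6^{y/2})$ exists for all $y/2\ge 3$, i.e. all even $y\ge 6$, which is exactly the range in the statement — this is why the bound improves from $y\ge 8$ to $y\ge 6$ relative to Theorem~\ref{x even, y even}.

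The next step is to assemble the ingredients for Lemma~\ref{main}. With $h=6$, $u=y/2$, the number of parallel classes is $\frac{h(u-1)}{2}=3(y/2-1)$. For the ``group'' ingredient (condition 3 of Lemma~\ref{main}) we need an $(3,6\bar x)\-\HWP(hx;r_\beta,s_\beta)=(3,6\bar x)\-\HWP(12\bar x;r_\beta,s_\beta)$; since $12\bar x=6\bar x\cdot 2$, this is a decomposition of $K_{12\bar x}-F$ into triangle factors and $C_{6\bar x}$-factors, and Theorems~\ref{OP}, \ref{Hammy} (or Lemma~\ref{double} applied with $x$ replaced appropriately) supply $(r_\beta,s_\beta)$ for a wide range — in particular the two extremes $(\frac{12\bar x-2}{2},0)$ and $(0,\frac{12\bar x-2}{2})$ from Theorem~\ref{OP} suffice if we only need $s_\beta\in\{0,\ldots,\frac{12\bar x-2}{2}\}$, but to cover all residues we may want more values, which Lemma~\ref{double} with $K_{6\bar x}-F$ decompositions provides. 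For the ``per-block'' ingredient (condition 2) we use Theorem~\ref{Alg2}: $K_{(2\bar x:3)}=K_{(4\bar x/2:3)}$... here one must be slightly careful, since Theorem~\ref{Alg2} decomposes $K_{(4\bar x:3)}$, not $K_{(2\bar x:3)}$; the resolution is that we should take $h=3$, $u=y$ is \emph{not} what we want, so instead we run Lemma~\ref{main} with the partite size being $4\bar x$ and a $3$-RGDD on fewer points, or — cleaner — we note $x=2\bar x$ so $K_{(x:3)}$ is $K_{(2\bar x:3)}$ and Theorem~\ref{Alg2} does not directly apply; the right move is to instead double via Lemma~\ref{double} or to use $hx$ with $h=6$ so $hx=12\bar x$ and apply the $K_{(4\bar x:3)}$ decomposition inside a suitable sub-block structure. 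I would reconcile this by choosing the RGDD and weighting so that the copies of $K_{(4\bar x:3)}$ from Theorem~\ref{Alg2} appear as the pieces being decomposed: take a $3$-RGDD$(6^{y/2})$, give each point weight $\bar x$, so each block becomes $K_{(\bar x:3)}$ blown up — no; more precisely, give each block of each parallel class a copy of $K_{(2\bar x:3)}$ and instead use that $K_{(2\bar x:3)}$ for odd $\bar x$ is handled by viewing two parallel classes together as $K_{(4\bar x:3)}$-type structures, or simply pair up the $y/2$ groups. The cleanest correct route: use $h=6$, and for each block $\{a_1,a_2,a_3\}$ note $\{a_i\}\times W$ with $|W|=2\bar x$... and since we have $3(y/2-1)$ parallel classes available but really want to group the partite sets of size $6\cdot(2\bar x)/\ldots$.

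Given the fiddliness above, the honest version of the plan is: apply Lemma~\ref{main} with $h=6$, $u=y/2$, $x$ (the lemma's $x$) equal to $2\bar x$, $m=3$, $n=6\bar x$; for condition 2, rather than Theorem~\ref{equip}'s uniform decomposition, observe that when we blow up we actually want a decomposition of $K_{(2\bar x:3)}$ into triangle factors and $C_{6\bar x}$-factors for all $s_p\in\{0,2,\ldots,2\bar x\}\cup\{\text{more}\}$, and that Theorem~\ref{Alg2} gives precisely this richness once we reindex $4\bar x'=2\bar x$... which fails for odd $\bar x$. Therefore the actual intended construction must be: take a $3$-RGDD$(6^{y/2})$, give every point weight $2\bar x$ so each block spawns $K_{(2\bar x:3)}$, and for the \emph{group} of size $6\cdot 2\bar x=12\bar x$ use a $(3,6\bar x)\-\HWP(12\bar x;r_\beta,s_\beta)$ — and crucially, inside each group of size $12\bar x$, further decompose using the fact that $12\bar x=4\bar x\cdot 3$ via Theorem~\ref{Alg2} on $K_{(4\bar x:3)}$ to gain the flexible $s$-values that Theorem~\ref{x even, y even} lacked. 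So the key steps in order are: (1) invoke the $3$-RGDD$(6^{y/2})$; (2) blow up with weight $2\bar x$ and use Theorem~\ref{equip} on each block's $K_{(2\bar x:3)}$ to get $s_\alpha\in\{0,x,\ldots\}$ at the coarse level; (3) on each group of size $12\bar x$, apply Theorem~\ref{Alg2} (which needs $\bar x$ odd — satisfied) to decompose its internal $K_{(4\bar x:3)}$ structure into $s_\beta$ $C_{6\bar x}$-factors and $r_\beta$ triangle factors for all $s_\beta\in\{0,2,3,\ldots,4\bar x\}$, plus Theorem~\ref{OP}/\ref{Hammy} for the remaining internal $K_{4\bar x}-F$ part; (4) do the arithmetic showing every $s\in\{0,2,3,\ldots,\frac{6\bar x y-2}{2}\}$ is realizable as a sum, leaving only $s=1$. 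The main obstacle is exactly step~(3)–(4): engineering the group-level decomposition of $K_{12\bar x}-F$ so that its achievable $(r_\beta,s_\beta)$ pairs, combined with the block-level contributions, tile the entire interval of $s$-values with no gaps except $s=1$ — this is where Theorem~\ref{x even, y even} failed (gaps $2\le s\le\frac x2-1$), and closing those gaps is the whole point of bringing in Theorem~\ref{Alg2}'s extra flexibility; verifying the sums cover everything is routine but must be done carefully, and confirming that Theorem~\ref{Alg2}'s $K_{(4\bar x:3)}$ decomposition genuinely embeds inside the group structure produced by the RGDD blow-up is the one place a subtle error could hide.
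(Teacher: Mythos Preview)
Your proposal never lands on a working construction, and the place where it goes off the rails is the choice of weight. You keep setting the Lemma~\ref{main} weight to $2\bar x$ (so that blocks become $K_{(2\bar x:3)}$), and then notice that Theorem~\ref{Alg2} decomposes $K_{(4\bar x:3)}$, not $K_{(2\bar x:3)}$; you try several workarounds and none of them close. The fix is simple: set the lemma's $x$ equal to $4\bar x$, not $2\bar x$. Then each weighted block is exactly $K_{(4\bar x:3)}$ and Theorem~\ref{Alg2} applies directly, giving $s_p\in\{0,2,3,\ldots,4\bar x\}$ per parallel class. The ``group'' ingredient becomes an $(3,6\bar x)\text{-}\HWP(h\cdot 4\bar x;r_\beta,s_\beta)$, and the constraint $h\cdot u\cdot 4\bar x=6\bar x y$ forces $hu=\tfrac{3y}{2}$.

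Two further errors in your plan follow from this. First, your assertion that a $3$-RGDD$(6^{y/2})$ exists for all $y/2\ge 3$ is false: Theorem~\ref{3rgdd} explicitly excepts $(h,u)=(6,3)$, so this route cannot reach $y=6$ and does \emph{not} explain the improvement from $y\ge 8$ to $y\ge 6$. Second, with weight $4\bar x$ and $hu=\tfrac{3y}{2}$, you cannot use a single RGDD for all even $y$. The paper therefore splits into two cases: for $y\equiv 2\pmod 4$ take $(h,u)=(3,y/2)$ (a $3$-RGDD$(3^{y/2})$ exists since $y/2$ is odd and $\ge 3$) and apply Lemma~\ref{main} with $hx=12\bar x$ and $s_\beta\in\{0,6\bar x-1\}$ from Theorem~\ref{OP}; for $y\equiv 0\pmod 4$ take $(h,u)=(6,y/4)$ and apply Lemma~\ref{main2}, using Theorem~\ref{equip} on $K_{(4\bar x:6)}$ for the extra $\gamma$-piece. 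Once the weight is $4\bar x$, the arithmetic in step~(4) becomes straightforward: $s_\alpha$ ranges over $\{0,2,3,\ldots\}$ in steps of~$1$ (not multiples of $x$), so the gaps $2\le s\le \tfrac{x}{2}-1$ that plagued Theorem~\ref{x even, y even} simply do not arise, and only $s=1$ is left open.
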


\begin{proof}
Assume that $y \equiv 2 \pmod{4}$ and $y \geq 6$. For all such $y$, 
there exists a $3$-RGDD$(3^{\frac{y}{2}})$  by Theorem~{\normalfont\ref{3rgdd}}.
There exists a $(3,6\bar{x})\-\HWP(12\bar{x};r_{\beta},s_{\beta})$ for all $(r_{\beta},s_{\beta}) \in \{(0,\frac{12\bar{x}-2}{2}),(\frac{12\bar{x}-2}{2},0)\}$ by
Theorem~{\normalfont\ref{OP}}. By Theorem~{\normalfont\ref{Alg2}}, we have that $K_{(4\bar{x}:3)}$ can be decomposed into $r_{p}$ $C_{3}$-factors
and $s_{p}$ $C_{6\bar{x}}$-factors for $(r_{p},s_{p}) \in \{(0,4\bar{x}),(1,4\bar{x}-1),\ldots,(4\bar{x}-2,2),(4\bar{x},0)\}$. 
Apply Lemma~{\normalfont\ref{main}} with $m=3$, $n=6\bar{x}$, and $x=4\bar{x}$. Let $s_{\alpha}=\sum_{p=1}^{3(\frac{y}{2}-1)/2} s_{p}$,
then it is easy to see that $s_{\alpha} \in \{0,2,3, \ldots, 3\bar{x}y-6\bar{x}\}$.
Write $s=s_{\alpha}+s_{\beta}$  where $s_{\alpha} \in \{0,2,3, \ldots, 3\bar{x}y-6\bar{x}\}$
and $s_{\beta} \in \{0,6\bar{x}-1\}$. Then we can write $s$ as $s_{\alpha}+s_{\beta}$
for every $s \in \{0,2,3, \ldots, \frac{6\bar{x}y-2}{2}\}$ in this way. Thus we can construct a $(3,6\bar{x})\-\HWP(6\bar{x}y;r,s)$ for all $s \in \{0,1, \ldots,
\frac{6\bar{x}y-2}{2}\}$.

Assume $y \equiv 0 \pmod{4}$, and $y \geq 12$.  Then there exists a $3$-RGDD$(6^{\frac{y}{4}})$ by Theorem~{\normalfont\ref{3rgdd}}.
There exists a decomposition of $K_{(4\bar{x}:3)}$ into $r_{p}$ $C_{3}$-factors and $s_{p}$ $C_{6\bar{x}}$-factors
for $s_{p} \in \{0,2,3,\ldots,4\bar{x}\}$ by Theorem~{\normalfont\ref{Alg2}}. By Theorem~{\normalfont\ref{equip}}, there exists a $(C_{3},C_{6\bar{x}})$-factorization
of $K_{(4\bar{x}:6)}$ for $(r_{\gamma},s_{\gamma}) \in \{(0,10\bar{x}),(10\bar{x},0)\}$. There exists a $(3,6\bar{x})\-\HWP(12\bar{x};r_{\beta},s_{\beta})$
for $s_{\beta} \in \{0,\frac{12\bar{x}-2}2\}$ by Theorem~{\normalfont\ref{OP}}.  Now we can easily write $s=s_{\alpha}+s_{\beta}+s_{\gamma}$
for $s \in \{0,2,3, \ldots, 3\bar{x}y-1\}$ and apply Lemma~{\normalfont\ref{main2}}.

\end{proof}

By writing $x=2\bar{x}$ Theorem~{\normalfont\ref{barxtheo}} covers the cases when $s \not=1$ and $x=6$ and also some of the 
cases when $s \not=1$ and $x\equiv 4 \pmod {6}$  (namely the ones where $x\equiv 10 \pmod{12}$). 
When $x \geq 6$ is even and $y \geq 8$ is even, the cases that are not covered by Theorems~{\normalfont\ref{x even, y even}} and ~{\normalfont\ref{barxtheo}}
are as follows:

\begin{packed_item}
\item $(s,x) \in \{(2,12), (4,12)\}$,
\item $2 \leq s \leq \frac{x}{2}-1$ and $x \equiv 4 \pmod{12}$,
\item $s=1$ and $x \equiv 2,4,10 \pmod{12}$.
\end{packed_item}

Because there is no $3$-RGDD$(6^u)$ for $u\leq 3$, Lemmas~{\normalfont\ref{main}} and {\normalfont\ref{main2}} are not useful when $y\in \{2,4,6\}$. However,
we still have some results.  When $y=2$ and $x$ is even we may apply Lemma~{\normalfont\ref{double}}
to find a $(3,3x)\-\HWP(6x;r,s)$ for $s=s_1+\frac{3x}{2},$ $r=r_1$, where $(s_1,r_1)$ is a solution of the Hamilton-Waterloo 
Problem with triangles and Hamilton cycles for $K_{3x}$.

When $y=4$ and $x \geq 2$ is even, consider $K_{12x}$. We can partition the vertices into four parts of size $3x$. In the four copies of $K_{3x}$ we have some solutions for the Hamilton-Waterloo Problem with triangles and Hamilton cycles. The remaining edges give us $K_{(3x:4)}$, which can be decomposed into all $C_{3x}$-factors or into all triangle factors. In this way we can get either all triangle factors, or $s=s_1+e_1\frac{9x}{2},$ $r=r_1+e_2\frac{9x}{2}$, where $(s_1,r_1)$ is a solution of the Hamilton-Waterloo problem with triangles and Hamilton cycles for $K_{3x}$ and $e_1+e_2=1$, $e_1,e_2\geq 0$.
If $y=6$ and $x$ is even, consider $K_{18x}$. 
By following the same method, 
%we can partition the vertices into six parts of size $3x$. 
%In the six copies of $K_{3x}$ we have some solutions for the Hamilton-Waterloo problem with triangles and Hamilton cycles. The edges that we have left give us $K_{(3x:6)}$. Here we can decompose into all $C_{3x}$-factors or into all triangle factors.
we can get either all triangle factors, or $s=s_1+e_1\frac{15x}{2},$ $r=r_1+e_2\frac{15x}{2}$, where $(s_1,r_1)$ is a solution of the Hamilton-Waterloo Problem with triangles and Hamilton cycles for $K_{3x}$ and $e_1+e_2=1$, $e_1,e_2\geq 0$.
\subsection{When $x$ is small}
\label{When $x$ is small}

In this subsection, we consider the small values of $x$ for which the general constructions used in Section~{\normalfont\ref{general}} cannot be readily applied.
By applying the methods described at the end of Section~{\normalfont\ref{general}}, it is easy to see that the following decompositions exist
when $x=2$:
a $(3,6)\-\HWP(24;r,s)$ for $s \in \{0,1,2,7,8,9,10,11\}$, and a $(3,6)\-\HWP(48;r,s)$ for $s \in \{0,1,2,3,4,5,12,13,14,19,20,21,22,23\}$.
The following three results gives solutions to the Hamilton-Waterloo Problem, $(3,3x)\-\HWP(3xy;r,s)$, for all other values of $y$ when
$x=2$.

\begin{theorem}
\label{x=2,y=2mod4}
There exists a $(3,6)\-\HWP(6y;r,s)$ for all $y\equiv 2 \pmod{4}$ if and only if $r+s=\frac{6y-2}{2}$, except when $y=2$ and $s=0$. 
\end{theorem}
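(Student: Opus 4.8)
There exists a $(3,6)\-\HWP(6y;r,s)$ for all $y\equiv 2\pmod 4$ if and only if $r+s=\frac{6y-2}{2}$, except when $y=2$ and $s=0$.

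\textit{Plan.} The plan is to combine Theorem~\ref{6and12} for the base case $y=2$, Theorem~\ref{OP} for the all-triangle case $s=0$, and Lemma~\ref{main} (fed by Lemma~\ref{K444}) for every other value of $s$.

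First I would dispose of necessity: by Theorem~\ref{Ncond}, since $3\mid 6y$ and $6\mid 6y$, the only constraint is $r+s=\tfrac{6y-2}{2}=3y-1$. For the stated exception, when $y=2$ and $s=0$ a solution would be a resolvable $C_3$-factorization of $K_{12}-F$, which is forbidden by Theorem~\ref{OP} (the pair $(12,3)$). For sufficiency I would split into three parts. (i) If $y=2$, then $6y=12$ and Theorem~\ref{6and12} already gives a $(3,6)\-\HWP(12;r,s)$ for every $r+s=5$ except $(r,s)=(5,0)$, which is exactly the claim. (ii) If $y\ge 6$ and $s=0$, a resolvable $C_3$-factorization of $K_{6y}-F$ exists by Theorem~\ref{OP} since $6y\ge 36$. (iii) If $y\ge 6$ and $1\le s\le 3y-1$, apply Lemma~\ref{main} with $m=3$, $n=6$, $h=3$, $u=y/2$, and inner part size $4$ (so $hux=6y$ and both $3,6$ divide $3\cdot 4=12$).

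The three hypotheses of Lemma~\ref{main} hold in this setting: a $3$-RGDD$(3^{y/2})$ exists by Theorem~\ref{3rgdd} because $y/2$ is odd (as $y\equiv 2\pmod 4$) and at least $3$; $K_{(4:3)}$ decomposes into $r_p$ $C_3$-factors and $s_p$ $C_6$-factors for every $s_p\in\{0,2,3,4\}$ with $r_p=4-s_p$ by Lemma~\ref{K444}; and a $(3,6)\-\HWP(12;r_\beta,s_\beta)$ exists for every $s_\beta\in\{1,2,3,4,5\}$ with $r_\beta=5-s_\beta$ by Theorem~\ref{6and12}. The number of parallel classes is $P=\tfrac{3(y/2-1)}{2}\ge 3$, so $s_\alpha=\sum_{p=1}^{P}s_p$ attains every value in $\{0\}\cup\{2,3,\dots,4P\}=\{0\}\cup\{2,\dots,3y-6\}$. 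Given a target $s$ with $1\le s\le 3y-1$ I would write $s=s_\alpha+s_\beta$ by taking $(s_\alpha,s_\beta)=(0,s)$ when $1\le s\le 5$, $(s_\alpha,s_\beta)=(2,4)$ when $s=6$, and $(s_\alpha,s_\beta)=(s-5,5)$ when $7\le s\le 3y-1$; in each case both coordinates are admissible, and Lemma~\ref{main} delivers the required $(3,6)\-\HWP(6y;r,s)$.

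The one place that needs care, and the reason $s=0$ must be obtained from Theorem~\ref{OP} rather than from the construction, is that the RGDD construction cannot produce $s=0$: this would force $s_\beta=0$, but $(3,6)\-\HWP(12;5,0)$ does not exist, and one cannot compensate with $s_\alpha=1$ because each block of the RGDD contributes $0,2,3,$ or $4$ factors of a given type, never exactly one. Apart from pinning this down, the argument is just bookkeeping on the attainable sums of $s_\alpha$ and $s_\beta$, so I do not anticipate any genuine obstacle.
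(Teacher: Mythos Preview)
Your proposal is correct and follows essentially the same route as the paper: base case $y=2$ via Theorem~\ref{6and12}, case $s=0$ via Theorem~\ref{OP}, and the remaining $s$ via Lemma~\ref{main} applied to a $3$-RGDD$(3^{y/2})$ with $x=4$, using Lemma~\ref{K444} for the $K_{(4:3)}$ ingredient and Theorem~\ref{6and12} for the $(3,6)\-\HWP(12;\cdot,\cdot)$ ingredient. Your explicit split of $s$ into $s_\alpha+s_\beta$ and your treatment of necessity and of the genuine exception at $(y,s)=(2,0)$ are slightly more detailed than the paper's version, but the argument is the same.
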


\begin{proof}
If $y=2$, then there exists a $(3,6)\-\HWP(12;r,s)$ for all possible $r$ and $s$ except when $s=0$ by Theorem~{\normalfont\ref{6and12}}.
We now assume that $y \equiv 2 \pmod{4}$ and $y \geq 6$. For all such $y$, 
there exists a $3$-RGDD$(3^{\frac{y}{2}})$  by Theorem~{\normalfont\ref{3rgdd}}.
There exists a $(3,6)\-\HWP(12;r_{\beta},s_{\beta})$ for all $(r_{\beta},s_{\beta}) \in \{(0,5),(1,4),(2,3),(3,2),(4,1)\}$ by
Theorem~{\normalfont\ref{6and12}}. By Lemma~{\normalfont\ref{K444}}, we have that $K_{(4:3)}$ can be decomposed into $r_{p}$ $C_{3}$-factors
and $s_{p}$ $C_{6}$-factors for $(r_{p},s_{p}) \in \{(0,4),(1,3),(2,2),(4,0)\}$. 
Apply Lemma~{\normalfont\ref{main}} with $m=3$, $n=6$, and $x=4$. Let $s_{\alpha}=\sum_{p=1}^{3(\frac{y}{2}-1)/2} s_{p}$,
then it is easy to see that $s_{\alpha} \in \{0,2,3, \ldots, 3y-6\}$.
Write $s=s_{\alpha}+s_{\beta}$  where $s_{\alpha} \in \{0,2,3, \ldots, 3y-6\}$
and $s_{\beta} \in \{1,2,3,4,5\}$. Then we can write $s$ as $s_{\alpha}+s_{\beta}$
for every $s \in \{1,2, \ldots, \frac{6y-2}{2}\}$ in this way. 
If $s=0$, then there exists a $(3,6)\-\HWP(6y;r,s)$
by Theorem~{\normalfont\ref{OP}}. Thus we can construct a $(3,6)\-\HWP(6y;r,s)$ for all $s \in \{0,1, \ldots,
\frac{6y-2}{2}\}$.

\end{proof}

\begin{theorem}
\label{x=2,y=0mod4}
There exists a $(3,6)\-\HWP(6y;r,s)$ for all $y\equiv 0 \pmod{4}$ if and only if $r+s=\frac{6y-2}{2}$, except possibly when
$y=4$ or $y=8$.
\end{theorem}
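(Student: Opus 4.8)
The plan is to follow the pattern of Theorem~\ref{x=2,y=2mod4}, but the $3$-RGDD$(3^{y/2})$ used there no longer exists when $y\equiv 0\pmod 4$ (then $3(y/2-1)$ is odd, violating the parity condition of Theorem~\ref{3rgdd}), so I would instead base the construction on a $3$-RGDD$(6^{y/4})$ fed through Lemma~\ref{main2} with a group-filling step. The ``only if'' direction is immediate from Theorem~\ref{Ncond}: $6y$ is even and divisible by both $3$ and $6$, forcing $r+s=\frac{6y-2}{2}=3y-1$. For the two extreme values, $s=0$ and $s=3y-1$ both follow from Theorem~\ref{OP}, since for $y\ge 12$ we have $6y\notin\{6,12\}$, so $K_{6y}-F$ admits both a resolvable $C_3$-factorization and a resolvable $C_6$-factorization. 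It therefore remains to realize every $s$ with $1\le s\le 3y-2$.

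Suppose first that $y\ge 16$, so $y/4\ge 4$ and a $3$-RGDD$(6^{y/4})$ exists by Theorem~\ref{3rgdd}. I would apply Lemma~\ref{main2} with $m=3$, $n=6$, $h=6$, $u=y/4$, and weight $x=4$, so that $hux=6y$. The required ingredients are all available: Lemma~\ref{K444} gives decompositions of $K_{(4:3)}$ into $r_p$ $C_3$-factors and $s_p$ $C_6$-factors with $s_p\in\{0,2,3,4\}$; Theorem~\ref{6and12} gives a $(3,6)\-\HWP(12;r_\beta,s_\beta)$ for every $s_\beta\in\{1,2,3,4,5\}$; and Theorem~\ref{equip} gives decompositions of $K_{(4:6)}$ into $r_\gamma$ $C_3$-factors and $s_\gamma$ $C_6$-factors with $(r_\gamma,s_\gamma)\in\{(10,0),(0,10)\}$. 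Writing $s_\alpha=\sum_{p=1}^{3(y/4-1)-1}s_p$ gives $s_\alpha\in\{0,2,3,\dots,3y-16\}$, and one then checks that $s=s_\alpha+s_\beta+s_\gamma$ attains every value in $\{1,\dots,3y-1\}$: taking $s_\gamma=0$ and varying $s_\beta\in\{1,\dots,5\}$ (to patch the small values and the one missing value $s_\alpha=1$) realizes $\{1,\dots,3y-11\}$, while taking $s_\gamma=10$ realizes $\{3y-10,\dots,3y-1\}$. Combined with $s\in\{0,3y-1\}$ from Theorem~\ref{OP}, this settles all $y\equiv 0\pmod 4$ with $y\ge 16$.

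The single remaining value, $y=12$ (that is, $K_{72}-F$), is the genuine obstacle, and it is exactly where the generic machinery stalls: $3$-RGDD$(6^3)$ is one of the two sporadic nonexistent resolvable GDDs in Theorem~\ref{3rgdd}, and every alternative weighting that preserves a \emph{mixed} $C_3/C_6$ decomposition of the small tripartite graph (only $K_{(4:3)}$ via Lemma~\ref{K444}, and $K_{(4\bar x:3)}$ with $\bar x$ odd via Theorem~\ref{Alg2}, are at hand) forces a $3$-RGDD on a group type with no valid member, while using weight $6$ makes every block a copy of $K_{(6:3)}$, which by Theorem~\ref{equip} admits only the all-$C_6$ factorization. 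For $y=12$ one must therefore argue directly---for instance, split $K_{72}-F=K_{(18:4)}\oplus 4\,(K_{18}-F)$, fill the four copies of $K_{18}-F$ using the companion $x=2$, odd-$y$ solutions of $(3,6)\-\HWP(18;r',s')$, and decompose $K_{(18:4)}$ into a suitably controlled mixture of $C_3$- and $C_6$-factors (e.g.\ by inflating a mixed decomposition of $K_{(6:4)}$), then verify that the resulting attainable values of $s$ cover precisely the range the weight-$4$ inflation cannot reach. Assembling this last piece, and making the $K_{(18:4)}$ and $K_{18}-F$ ranges dovetail without a gap, is where I expect the main difficulty to lie.
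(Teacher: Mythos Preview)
Your approach for $y\ge 16$ is exactly the paper's: apply Lemma~\ref{main2} with a $3$-RGDD$(6^{y/4})$, weight $x=4$, the mixed $K_{(4:3)}$ factorizations of Lemma~\ref{K444}, the $K_{(4:6)}$ factorizations of Theorem~\ref{equip}, and the $(3,6)\-\HWP(12;r_\beta,s_\beta)$ of Theorem~\ref{6and12}. Your arithmetic for covering all $s$ is correct and in fact slightly more careful than the paper's, since you explicitly separate out $s=0$ (which cannot be reached with $s_\beta\ge 1$) via Theorem~\ref{OP}.

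Where you and the paper diverge is at $y=12$. The paper's proof simply asserts ``Assume $y\equiv 0\pmod 4$, and $y\ge 12$. Then there exists a $3$-RGDD$(6^{y/4})$ by Theorem~\ref{3rgdd}'' and proceeds uniformly. You are right that this is a gap: $(h,u)=(6,3)$ is one of the listed exceptions in Theorem~\ref{3rgdd}, so no $3$-RGDD$(6^3)$ exists and the construction does not apply at $y=12$. The paper offers no separate argument for this value; it is an oversight in the published proof. So the incompleteness you flag at $y=12$ is genuine, and it is not resolved by the paper either.

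Your sketched fallback for $y=12$ (splitting $K_{72}-F$ over $K_{(18:4)}$ and copies of $K_{18}-F$) is reasonable in spirit but, as you acknowledge, not finished: the ingredients you invoke (mixed $C_3/C_6$ factorizations of $K_{(18:4)}$, or of $K_{(6:4)}$ to be inflated) are not supplied by the results available here, and the $(3,6)\-\HWP(18;r',s')$ range from Theorem~\ref{x=2, y odd} is quite narrow. Filling this case would require an independent argument beyond what the paper provides.
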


\begin{proof}
Assume $y \equiv 0 \pmod{4}$, and $y \geq 12$.  Then there exists a $3$-RGDD$(6^{\frac{y}{4}})$ by Theorem~{\normalfont\ref{3rgdd}}.
There exists a decomposition of $K_{(4:3)}$ into $r_{p}$ $C_{3}$-factors and $s_{p}$ $C_{6}$-factors
for $s_{p} \in \{0,2,3,4\}$ by Lemma~{\normalfont\ref{K444}}. By Theorem~{\normalfont\ref{equip}}, there exists a $(C_{3},C_{6})$-factorization
of $K_{(4:6)}$ for $(r_{\gamma},s_{\gamma}) \in \{(0,10),(10,0)\}$. There exists a $(3,6)\-\HWP(12;r_{\beta},s_{\beta})$
for $s_{\beta} \in \{1,2,3,4,5\}$ by Theorem~{\normalfont\ref{6and12}}.  Now we can easily write $s=s_{\alpha}+s_{\beta}+s_{\gamma}$
for $s \in \{0,1,\ldots, 3y-1\}$ and apply Lemma~{\normalfont\ref{main2}}.
\end{proof}

\begin{theorem}
\label{x=2, y odd}
There exists a $(3,6)\-\HWP(6y;r,s)$ when $y$ is odd and \\$s \in \{1,2, \frac{3(y-1)}{2}+1, \frac{3(y-1)}{2}+2,\ldots, 3y-1\}$.
\end{theorem}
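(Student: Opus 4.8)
The plan is to split the claimed set of admissible $s$ into the interval $I=\{\frac{3(y-1)}{2}+1,\frac{3(y-1)}{2}+2,\dots,3y-1\}$ and the two isolated values $s\in\{1,2\}$, to realise $I$ with the weighting construction of Lemma~\ref{main}, to realise $\{1,2\}$ by a direct partition into $6$-sets, and to treat the smallest case $v=18$ (that is, $y=3$) by an explicit construction.

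\textbf{The interval $I$.} For odd $y\ge 3$ there is a $3$-RGDD$(3^{y})$ by Theorem~\ref{3rgdd}, so I would apply Lemma~\ref{main} with $m=3$, $n=6$, $x=2$, $h=3$, $u=y$. The inputs needed are a decomposition of $K_{(2:3)}$ into $r_p$ triangle factors and $s_p$ $C_6$-factors for each $(r_p,s_p)\in\{(1,1),(0,2)\}$ --- the pair $(0,2)$ is the resolvable $C_6$-factorization supplied by Theorem~\ref{equip}, while $(1,1)$ is the explicit writing of $K_{(2:3)}$ as two transversal triangles together with a $6$-cycle --- and a $(3,6)\-\HWP(6;r_\beta,s_\beta)$ for $(r_\beta,s_\beta)\in\{(1,1),(0,2)\}$, which is Theorem~\ref{6and12}. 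Since each of the $\frac{3(y-1)}{2}$ parallel classes contributes a pair in $\{(1,1),(0,2)\}$ and the groups contribute one further such pair, the total $s=s_\alpha+s_\beta$ takes every value in $I$. When $y=1$ the statement is exactly the existence of a $(3,6)\-\HWP(6;r,s)$ with $s\in\{1,2\}$, which is again Theorem~\ref{6and12}.

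\textbf{The values $s\in\{1,2\}$ for odd $y\ge 5$.} The weighting constructions cannot reach these, since they force $s\ge\frac{3(y-1)}{2}+1\ge 4$; instead I would partition the $6y$ vertices into $y$ blocks of size $6$. On every $K_6$-block place the same $(3,6)\-\HWP(6;r',s')$ with $(r',s')\in\{(1,1),(0,2)\}$ (Theorem~\ref{6and12}); the $y$ deleted $1$-factors combine into a single $1$-factor $F$ of $K_{6y}$, and pasting together the corresponding factors from the $y$ blocks gives $r'$ triangle factors and $s'$ $C_6$-factors of $K_{6y}$ carried by the within-block edges. The edges between distinct blocks form $K_{(6:y)}$, and for odd $y\ge 5$ the triple $(6,y,3)$ is not among the four exceptions of Theorem~\ref{equip}, so $K_{(6:y)}$ has a resolvable $C_3$-factorization into $3(y-1)$ triangle factors. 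Hence $K_{6y}-F$ is decomposed into $3(y-1)+r'$ triangle factors and $s'$ $C_6$-factors, giving $(r,s)=(3y-2,1)$ and $(3y-3,2)$.

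\textbf{The case $v=18$, $s\in\{1,2\}$.} This is where the difficulty concentrates. The $6$-set partition above fails because $K_{(6:3)}$ is precisely the forbidden triple $(6,3,3)$ of Theorem~\ref{equip} and has no resolvable $C_3$-factorization, while every weighting construction available on $18$ points forces $s\ge 4$; and since the union of any two triangle factors re-resolves only into triangle factors, the obvious modifications of a near-Kirkman system on $18$ points do not help either. I would therefore complete the proof by exhibiting directly a decomposition of $K_{18}-F$ into $7$ triangle factors and one $C_6$-factor, and one into $6$ triangle factors and two $C_6$-factors --- for instance through a tailored difference construction on $\mathbb{Z}_{18}$, or by keeping a $(3,6)\-\HWP(6)$ on each of three $6$-sets and packing as many transversal triangle factors as possible into $K_{(6:3)}$ alongside the complementary $C_6$-factor(s). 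This small explicit case is the genuine crux; everything else is assembled from Lemma~\ref{main}, Theorem~\ref{equip} and Theorem~\ref{6and12}.
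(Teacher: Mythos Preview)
Your approach is essentially the paper's. For the interval $I$ you both apply Lemma~\ref{main} with a $3$-RGDD$(3^y)$, $K_{(2:3)}$ split as $(r_p,s_p)\in\{(1,1),(0,2)\}$, and a $(3,6)\-\HWP(6)$ on the groups. For $s\in\{1,2\}$ with odd $y\ge 5$, your block-partition argument is exactly the paper's application of Lemma~\ref{main} with $h=6$, $u=y$, $x=1$: a $3$-RGDD$(6^y)$ \emph{is} a resolvable $C_3$-factorization of $K_{(6:y)}$, each block becomes $K_{(1:3)}=K_3$ contributing a single triangle, and the size-$6$ groups receive the $(3,6)\-\HWP(6)$.

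You are right to isolate $y=3$, $s\in\{1,2\}$ as the crux. The paper's proof at this point invokes a $3$-RGDD$(6^y)$ without restriction, but Theorem~\ref{3rgdd} lists $(1,6,3)$ as an exception---equivalently, $(6,3,3)$ is forbidden in Theorem~\ref{equip}---so the paper's argument does not actually cover $y=3$. Your proposal, however, does not close the gap either: you sketch two possible routes (a difference construction on $\mathbb{Z}_{18}$, or a hybrid packing of $K_{(6:3)}$) but do not carry either one out. Producing an explicit decomposition of $K_{18}-F$ into seven triangle factors plus one $C_6$-factor, and into six plus two, is precisely the missing content; without it the $v=18$ case of the theorem remains unproved in both the paper and your proposal.
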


\begin{proof}
If $y=1$, then exists a $(3,6)\-\HWP(6;r,s)$
for all possible $r$ and $s$ except for $(r,s)=(2,0)$ by Theorem~{\normalfont\ref{6and12}}.
Assume $y\geq 3$ is odd, then there exists a $3$-RGDD$(3^y)$ by Theorem~{\normalfont\ref{3rgdd}}.  There exists a $(3,6)\-\HWP(6;r_{\beta},s_{\beta})$
for $(r_{\beta},s_{\beta}) \in \{(1,1),(0,2)\}$ by Theorem~{\normalfont\ref{6and12}}. It is easy to see that $K_{(2:3)}$ can be decomposed into a $C_{3}$-factor and a $C_{6}$-factor or two $C_{6}$-factors.  Apply Lemma~{\normalfont\ref{main}} with $m=3$, $n=6$ and $x=2$. Let $s_{\alpha}=\sum_{p=1}^{3(y-1)/2} s_{p}$ with $s_{p} \in \{1,2\}$ and
notice that $s_{\alpha} \in \{\frac{3(y-1)}{2},\frac{3(y-1)}{2}+1, \ldots, 3(y-1)\}$. Then we can write $s$ as $s_{\alpha}+s_{\beta}$ for
every $s \in \{\frac{3(y-1)}{2}+1, \frac{3(y-1)}{2}+2,\ldots, 3y-1\}$. Thus we obtain a $(3,6)\-\HWP(6y;r,s)$ for all such $s$. We can also obtain a $(3,6)\-\HWP(6y;r,s)$
for $s=1$ and $s=2$ as follows. There exists a $3$-RGDD$(6^y)$ by Theorem~{\normalfont\ref{3rgdd}}; it has $3(y-1)$ parallel classes. There exists a $(3,6)\-\HWP(6;r_{\beta},s_{\beta})$ for $s_{\beta} \in \{1,2\}$. Apply Lemma~{\normalfont\ref{main}} with $m=3$, $n=6$ and $x=1$, and write $s=s_{\alpha}+s_{\beta}$ with $s_{\alpha}=0$ and $s_{\beta}=1$ or $s_{\beta}=2$.
\end{proof}

Recall from Theorem~{\normalfont\ref{6and12}} that there exists a $(3,12)\-\HWP(12;r_{\delta},s_{\delta})$ if and only if $s_{\delta} \in \{1,2,3,4,5\}$. 
For each possible decomposition of $K_{12}$, let $s_{\beta}=s_{\delta}+6$, and apply Lemma~{\normalfont\ref{double}}
to obtain a $(3,12)\-\HWP(24;r, s)$ for all $s \in \{7,8,9,10,11\}$. If $s=0$, then simply apply Theorem~{\normalfont\ref{OP}}.
Similarly, apply Theorem~{\normalfont\ref{OP}} to obtain a $(3,12)\-\HWP(48;r, s)$ for $s=0$.
Consider the equipartite graph $K_{(12:4)}$. It has a $C_{12}$-factorization
and a $C_{3}$-factorization by Theorem~{\normalfont\ref{equip}}. On each part, construct a $(3,12)\-\HWP(12;r,s)$ for
$s \in \{1,2,3,4,5\}$. Thus we have a $(3,12)\-\HWP(48;r,s)$ for $s \in \{0,1,2,3,4,5,19,20,21,22,23\}$.
The next theorem settles the Hamilton-Waterloo Problem, $(3,3x)\-\HWP(3xy;r,s)$ when
$x=4$ for the remaining values of $y$.

\begin{theorem}
\label{x=4}
For $y=3$ and all $y \geq 5$, there exists a $(3,12)\-\HWP(12y;r,s)$ if and only if $r+s=\frac{v-2}{2}$.
\end{theorem}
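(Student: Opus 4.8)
The necessity of $r+s=\frac{v-2}{2}$ with $v=12y$ is immediate from Theorem~\ref{Ncond}, since $v$ is even and both $3$ and $12$ divide $12y$. For sufficiency the plan is to split on the parity of $y$ and feed the weighting constructions of Section~\ref{A Weighting Construction} with $m=3$, $n=12$, and the parameter ``$x$'' of those lemmas set to $4$, so that the point count comes out to $hu\cdot 4=12y$. The needed building blocks are cheap: by Theorem~\ref{equip}, $K_{(4:3)}$ has both a resolvable $C_3$-factorization and a resolvable $C_{12}$-factorization, so it may be used with $(r_p,s_p)\in\{(4,0),(0,4)\}$; by Theorem~\ref{6and12} there is a $(3,12)\-\HWP(12;r_\beta,s_\beta)$ for every $s_\beta\in\{1,2,3,4,5\}$; and by Theorem~\ref{OP} there is a resolvable $C_3$-factorization of $K_{12y}-F$ whenever $12y\notin\{6,12\}$, which disposes of $s=0$.

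For $y$ odd (this includes $y=3$), Theorem~\ref{3rgdd} gives a $3$-RGDD$(3^y)$, with $\frac{3(y-1)}{2}$ parallel classes. Applying Lemma~\ref{main} with $h=3$, $u=y$, I would take each copy of $K_{(4:3)}$ to be a pure $C_3$- or pure $C_{12}$-factorization, so that $s_\alpha=\sum_{p=1}^{3(y-1)/2}s_p$ runs over $\{0,4,8,\dots,6(y-1)\}$, and place a $(3,12)\-\HWP(12;r_\beta,s_\beta)$ on each group (of size $hx=12$) with $s_\beta\in\{1,\dots,5\}$. Because the intervals $[\,s_\alpha+1,\ s_\alpha+5\,]$ overlap as $s_\alpha$ increases in steps of $4$, the sums $s_\alpha+s_\beta$ cover $\{1,2,\dots,6y-1\}$; together with $s=0$ from Theorem~\ref{OP} this yields every admissible value.

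For $y$ even with $y\ge 8$ there is no $3$-RGDD$(3^y)$, so instead I would use the $3$-RGDD$(6^{y/2})$ from Theorem~\ref{3rgdd} together with the stronger Lemma~\ref{main2}: the base design placed on a single block is then only a $(3,12)\-\HWP(3\cdot 4;r_\beta,s_\beta)=(3,12)\-\HWP(12;r_\beta,s_\beta)$ (available for $s_\beta\in\{1,\dots,5\}$), and the extra ingredient $K_{(4:6)}$ may be taken, again by Theorem~\ref{equip}, with $(r_\gamma,s_\gamma)\in\{(10,0),(0,10)\}$. With $s_\alpha$ now ranging over $\{0,4,\dots,6y-16\}$, the choice $s_\gamma=0$ covers $\{1,\dots,6y-11\}$ and the choice $s_\gamma=10$ covers $\{11,\dots,6y-1\}$; their union is $\{1,\dots,6y-1\}$, and $s=0$ is handled by Theorem~\ref{OP} as before.

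The value of $y$ left uncovered by this dichotomy is $y=6$ (that is, $v=72$), for which neither $3$-RGDD$(3^6)$ nor $3$-RGDD$(6^3)$ exists, and this is the main obstacle. Here the uniform argument breaks down and I would have to fall back on a dedicated construction — for example, weighting a $3$-RGDD$(3^3)$ by $8$ (so that blocks become copies of $K_{(8:3)}$ and groups become copies of $K_{24}$, exploiting the partial $(3,12)\-\HWP(24;r,s)$ results obtained earlier via Lemma~\ref{double}), or a direct splitting of $K_{72}-F$, reinforced by whatever mixed $(C_3,C_{12})$-decompositions of a small equipartite graph such as $K_{(8:3)}$ one can produce. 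Covering the full range of $s$ at $v=72$ is the delicate point, and I expect it to require its own short case analysis rather than the single recipe that handles all other $y$ in scope.
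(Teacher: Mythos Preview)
Your argument for odd $y$ and for even $y\ge 8$ matches the paper's proof exactly: Lemma~\ref{main} over a $3$-RGDD$(3^y)$ in the odd case, Lemma~\ref{main2} over a $3$-RGDD$(6^{y/2})$ in the even case, both fed with $K_{(4:3)}$ taken as a pure $C_3$- or $C_{12}$-factorization, $K_{(4:6)}$ likewise, and $s_\beta\in\{1,\dots,5\}$ from Theorem~\ref{6and12}; the covering arithmetic the paper writes out is the same as yours.

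Your caution at $y=6$ is well placed, and in fact you have been more careful here than the paper. The paper opens the even case with ``Let $y\ge 6$ be even. There exists a $3$-RGDD$(6^{y/2})$ by Theorem~\ref{3rgdd},'' but for $y=6$ this asks for a $3$-RGDD$(6^3)$, which is precisely one of the nonexistence cases $(\lambda,h,u)=(1,6,3)$ listed in Theorem~\ref{3rgdd} (and the paper itself remarks just after that theorem that a $3$-RGDD$(6^u)$ exists only for $u\ge 4$). So the paper's written proof has the very gap you identified but does not acknowledge it. Regarding your proposed fallbacks: weighting a $3$-RGDD$(3^3)$ by $8$ combined with the $(3,12)\-\HWP(24;r_\beta,s_\beta)$ available in the paper (only $s_\beta\in\{0,7,8,9,10,11\}$ are established) still misses many values of $s$, so a genuinely ad hoc treatment of $K_{72}-F$ would indeed be required.
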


\begin{proof}
Let $y \geq 6$ be even. There exists a $3$-RGDD$(6^{y/2})$  by Theorem~{\normalfont\ref{3rgdd}}.
There exists a decomposition of $K_{(4:3)}$ into $r_{p}$ $C_{3}$-factors
and $s_{p}$ $C_{12}$-factors for $(r_{p},s_{p}) \in \{(0,4),(4,0)\}$ by Lemma~{\normalfont\ref{equip}}.
By the same result, we also get a decomposition of $K_{(4:6)}$ into $r_{\gamma}$ $C_{3}$-factors
and $s_{\gamma}$ $C_{12}$-factors for $(r_{\gamma}, s_{\gamma}) \in \{(0,10), (10,0)\}$.
Recall that there exists a $(3,12)\-\HWP(12;r_{\beta},s_{\beta})$ for $(r_{\beta},s_{\beta}) \in \{(0,5),(1,4),(2,3),(3,2),(4,1)\}$
by Theorem~{\normalfont\ref{6and12}}.
Write $s_{\alpha}=\sum_{p=1}^{\frac{3y}{2}-4} s_{p}$ so $s_{\alpha} \in \{0,4,8,\ldots, 6y-16\}$.
By Lemma~{\normalfont\ref{main2}}, we obtain a $(3,12)\-\HWP(3xy;r,s)$ for all $s \in \{0,1,\ldots, 6y-1\}$
as follows.
If $s=0$, apply Theorem~{\normalfont\ref{OP}}.
If $s \in \{1,2, \ldots, 6y-11\}$, it is easy to see that we can let $s_{\gamma}=0$
and write $s$ as $s=s_{\alpha}+s_{\beta}$. 
If $s=6y-10$, choose $s_{\alpha}=6y-24$, $s_{\beta}=4$, and $s_{\gamma}=10$.
If $s=6y-i$ for $i=9,8,7,6$, choose $s_{\alpha}=6y-20$, $s_{\beta}=10-i$
and $s_{\gamma}=10$. 
If $s=6y-i$ for $i=5,4,3,2,1$, choose $s_{\alpha}=6y-16$, $s_{\beta}=6-i$ and $s_{\gamma}=10$.

If $y \geq 3$ is odd, there exists a $3$-RGDD$(3^{y})$ by Lemma~{\normalfont\ref{3rgdd}}. There exists a decomposition of $K_{(4:3)}$
into $r_{p}$ $C_{3}$-factors and $s_{p}$ $C_{12}$-factors for $(r_{p},s_{p}) \in \{(0,4),(4,0)\}$ by Theorem~{\normalfont\ref{equip}}.
Write $s_{\alpha}=\sum_{p=1}^{\frac{3(y-1)}{2}} s_{p}$, so $s_{\alpha} \in \{0,4,8,\ldots, 6(y-1)\}$.
Recall the existence of a $(3,12)\-\HWP(12; r_{\beta},s_{\beta})$ for $s_{\beta} \in \{1,2,3,4,5\}$.
Then it is easy to see that we can write $s$ as $s_{\alpha}+s_{\beta}$ for all $s \in\{0,1,2, \ldots, 6y-1\}$.
Thus we may apply Lemma~{\normalfont\ref{main}} for the result.

\end{proof}

\section{Conclusions}
The following Theorem combines the results from Theorems~{\normalfont\ref{x odd, y odd}},~{\normalfont\ref{x odd, y even}},~{\normalfont\ref{x even, y odd}},~{\normalfont\ref{x even, y even}}, {\normalfont\ref{barxtheo}},~{\normalfont\ref{x=2,y=2mod4}},~{\normalfont\ref{x=2,y=0mod4}},~{\normalfont\ref{x=2, y odd}}, and ~{\normalfont\ref{x=4}} (note that we did not include all of the small partially complete results such as those at the end of Section~{\normalfont\ref{general}}).:
\begin{theorem}
Let $x\geq 2$, $y\geq 2$, and $r,s\geq 0$ such that $r+s=\lfloor\frac{3xy-1}{2}\rfloor$. Then there exist a $(3,3x)$-HWP$(3xy;r,s)$ except possibly when:
\begin{packed_item}
\item $s=1$, $y\geq 3$, and $x\in\{3,31,37,41,43,47,51,53,59,61,67,69,71,79,83\}$.
\item $s=1$, $x$ is odd and $y$ is even.
\item $s=1$, $x\geq 6$, $x\equiv 2 \pmod {12}$.
\item $s=1$, $y\geq 8$ is even and $x\equiv 10 \pmod{12}$.
\item $s=1$, $x\geq 3$ is odd and $y$ is even.
\item $1\leq s \leq \frac{x}{2}-1$, $x\geq 16$, $x\equiv 4 \pmod{12}$, $y$ is even.
\item $1\leq s \leq \frac{x}{2}-1$, $x\geq 10$, $x\equiv 4 \pmod{6}$, $y$ is odd.
\item $(s,x)\in \{(2,12),(4,12)\}$.
\item $s=0$, $x=2$, $y=2$.
\item $x=2$ and $y\in\{4,8\}$.
\item $s\in \{3,4,\ldots \frac{3(y-1)}{2}\}$, $x=2$ and $y\geq 3$ is odd.
\item $x\not\in\{2,4\}$ and $y\in\{2,4,6\}$.
\item $x=4$ and $y\in\{2,4\}$.
\item $x=6$ and $y$ odd.
\end{packed_item}

\end{theorem}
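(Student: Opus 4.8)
The proof is a synthesis of the case analyses carried out above. First note that, since $3\mid 3xy$ and $3x\mid 3xy$ hold automatically, the hypothesis $r+s=\lfloor\frac{3xy-1}{2}\rfloor$ is exactly the full set of necessary conditions of Theorem~\ref{Ncond} (the floor equals $\frac{3xy-1}{2}$ when $x$ and $y$ are both odd and $\frac{3xy-2}{2}$ otherwise). Hence it remains to realize every such pair $(r,s)$ lying outside the exceptional list, and the plan is to partition the parameter space by the parity of $x$ and $y$ and by whether $x\in\{2,4\}$ or $y\in\{2,4,6\}$, quoting in each region the theorem already established.

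Concretely: for $x\ge 3$ and $y\ge 3$ both odd, Theorem~\ref{x odd, y odd} gives the decompositions with precisely the exceptions $s=1$, $x=3$ and $s=1$, $x$ among the fourteen listed values. For $x\ge 3$ odd and $y\ge 8$ even, Theorem~\ref{x odd, y even} leaves only $s=1$ open; for $y\in\{2,4,6\}$ the pair is recorded as exceptional. For $x\ge 8$ even and $y\ge 3$ odd, Theorem~\ref{x even, y odd} applies with the stated exceptions $(s,x)\in\{(2,12),(4,12)\}$, $1\le s\le\frac{x}{2}-1$ with $x\equiv4\pmod 6$, and $s=1$ with $x\equiv2\pmod{12}$, and $x=6$, $y$ odd is left open. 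For $x\ge 8$ even and $y\ge 8$ even, Theorem~\ref{x even, y even} handles all but $(s,x)\in\{(2,12),(4,12)\}$, $2\le s\le\frac{x}{2}-1$ with $x\equiv4,10\pmod{12}$, and $s=1$ with $x\equiv2,4,10\pmod{12}$; writing $x=2\bar x$ with $\bar x$ odd, Theorem~\ref{barxtheo} then kills the $x\equiv10\pmod{12}$ subcases with $s\neq1$ (and, with $\bar x=3$, the case $x=6$, $y$ even), leaving the residue $x\equiv4\pmod{12}$. The small multipliers are treated separately: $x=2$ by Theorems~\ref{x=2,y=2mod4}, \ref{x=2,y=0mod4}, and~\ref{x=2, y odd}, supplemented by the ad hoc decompositions of $K_{24}$ and $K_{48}$ (via Lemma~\ref{double} and an equipartite factorization) recorded at the start of Section~\ref{When $x$ is small}, which together leave open $s=0$, $y=2$; $y\in\{4,8\}$; and $s\in\{3,\dots,\frac{3(y-1)}{2}\}$ for odd $y$; and $x=4$ by Theorem~\ref{x=4} for $y=3$ and $y\ge 5$ (plus the computed $K_{48}$ cases), leaving $y\in\{2,4\}$. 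Finally, for $x\ge 6$ even and $y\in\{2,4,6\}$ there is no $3$-RGDD$(3^y)$ or $3$-RGDD$(6^{y/2})$ with enough parallel classes, so Lemmas~\ref{main} and~\ref{main2} do not apply and these pairs are recorded as exceptions.

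The only substantive work — and the main obstacle — is the bookkeeping: one must check that the union of the exceptional sets contributed by these nine theorems coincides \emph{exactly} with the list in the statement, so that nothing unproved is claimed and nothing proved is needlessly excluded. The delicate points are (i) reconciling the $s=1$ exceptions, which come from four distinct sources (the fourteen sporadic odd $x$, the entire $x$ odd / $y$ even regime, $x\equiv2\pmod{12}$, and $x\equiv10\pmod{12}$ with $y$ even), together with the observation that Theorem~\ref{barxtheo} never recovers $s=1$; (ii) tracking how the range $1\le s\le\frac{x}{2}-1$ contracts from modulus $6$ (for $y$ odd) to modulus $12$ (for $y$ even, once Theorem~\ref{barxtheo} is applied), and the accompanying $x\ge10$ versus $x\ge16$ thresholds; and (iii) correctly flagging the residual small cases $x\in\{2,4\}$ with $y\in\{2,4,6,8\}$, $x=6$ with $y$ odd, and $x\not\in\{2,4\}$ with $y\in\{2,4,6\}$. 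No construction beyond those of Section~\ref{Constructions} is needed.
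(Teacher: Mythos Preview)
Your proposal is correct and follows exactly the paper's approach: the paper's ``proof'' consists solely of the sentence declaring that the theorem combines the results of Theorems~\ref{x odd, y odd}--\ref{x=4}, and your write-up simply spells out the parity-by-parity case split and the bookkeeping that this synthesis entails. One minor remark: the exception list in the statement is deliberately slightly broader than the union of what the cited theorems actually leave open (for instance, Theorem~\ref{barxtheo} with $\bar{x}=3$ does cover $x=6$, $y=6$, $s\neq 1$, even though item~12 still flags it), as the paper acknowledges by noting that not all partial results were folded in; so ``coincides exactly'' overstates what needs to be checked---containment in the listed exceptions suffices.
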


\label{conclusions}

\end{document}